\documentclass[11pt]{amsart} \textwidth=14.5cm \oddsidemargin=1cm
\evensidemargin=1cm

\usepackage{amsmath,wasysym}
\usepackage{amsxtra}
\usepackage{amscd}
\usepackage{amsthm}
\usepackage{amsfonts}
\usepackage{amssymb}
\usepackage{eucal}
\usepackage{graphics, color}
\usepackage{mathrsfs}
\usepackage[usenames,dvipsnames]{xcolor}
\usepackage[pdfusetitle,hidelinks,bookmarksdepth=2]{hyperref}


\usepackage{stmaryrd} 



\newtheorem{thm}{Theorem} [section]
\newtheorem{lem}[thm]{Lemma}
\newtheorem{cor}[thm]{Corollary}
\newtheorem{prop}[thm]{Proposition}
\newtheorem{conj}[thm]{Conjecture}

\theoremstyle{definition}
\newtheorem{definition}[thm]{Definition}

\theoremstyle{remark}
\newtheorem{rem}[thm]{Remark}

\numberwithin{equation}{section}

\newcommand{\thmref}[1]{Theorem~\ref{#1}}

\newcommand{\lemref}[1]{Lemma~\ref{#1}}
\newcommand{\propref}[1]{Proposition~\ref{#1}}
\newcommand{\corref}[1]{Corollary~\ref{#1}}

\newcommand{\eqnref}[1]{(\ref{#1})}

\newcommand{\defref}[1]{Definition~\ref{#1}}


\newcommand{\C}{{\mathbb C}}
\newcommand{\N}{{\mathbb N}}

\newcommand{\Q}{\mathbb {Q}}
\newcommand{\Z}{{\mathbb Z}}

\newcommand{\h}{\mathfrak h}

\newcommand{\IND}{\text{IND}\,}
\newcommand{\RES}{\text{RES}\,}

\newcommand{\mf}{\mathfrak}

\newcommand{\U}{\mathbf U}
\newcommand{\ov}{\overline}

\newcommand{\hh}{\mathfrak{H}}

\newcommand{\x}{\mathtt{x}}

\newcommand{\blue}[1]{{\color{blue}#1}}

\newcommand{\qbinom}[2]{\begin{bmatrix}
		#1\\#2
\end{bmatrix} }


\newcommand{\bb}{\mathfrak b}


\DeclareMathOperator{\End}{End}
\DeclareMathOperator{\Mat}{Mat}

\DeclareMathOperator{\Span}{span}
\DeclareMathOperator{\im}{im}
\DeclareMathOperator{\rk}{rk}

\newcommand{\NC}{\mathrm{NC}}
\newcommand{\NH}{\mathrm{NH}}
\newcommand{\Pol}{\mathrm{Pol}}


\newcommand{\tya}[1]{{}^\mathfrak{a}\!#1}
\newcommand{\tyb}[1]{{}^\mathfrak{b}\!#1}
\newcommand{\tyd}[1]{{}^\mathfrak{d}\!#1}
\newcommand{\tyeven}[1]{{#1}}
\newcommand{\tyodd}[1]{{#1}^{\text{--}}}
\newcommand{\aeven}[1]{\tya\tyeven{#1}}
\newcommand{\aodd}[1]{\tya\tyodd{#1}}
\newcommand{\beven}[1]{\tyb\tyeven{#1}}
\newcommand{\bodd}[1]{\tyb\tyodd{#1}}
\newcommand{\deven}[1]{\tyd\tyeven{#1}}
\newcommand{\dodd}[1]{\tyd\tyodd{#1}}


\newcommand{\pbo}{\bodd\partial}

\newcommand{\pdo}{\dodd\partial}


\newcommand{\sbn}{\tyb{s}}

\newcommand{\sbo}{\sbn}  
\newcommand{\sdn}{\tyd{s}}

\newcommand{\sdo}{\sdn}  


\newcommand{\pso}{\tyodd\partial}

\newcommand{\sso}{s}

\newcommand{\elembo}{\bodd{\varepsilon}}
\newcommand{\elemboprime}{{}^\mathfrak{b}\!{\varepsilon}'^{\text{--}}}
\newcommand{\elemdo}{\dodd{\varepsilon}}
\newcommand{\elemdoprime}{\dodd{\varepsilon'}}

\newcommand{\schubo}{{}^\mathfrak{b}\!{\mf{s}}}

\newcommand{\schudo}{{}^\mathfrak{d}\!{\mf{s}}}

\newcommand{\deltab}{{}^\mathfrak{b}\!{\delta}}
\newcommand{\deltad}{{}^\mathfrak{d}\!{\delta}}

\newcommand{\mchbo}{\bodd{\mathcal{H}}}

\newcommand{\mchdo}{\dodd{\mathcal{H}}}

\allowdisplaybreaks

\begin{document}
\title[Spin nilHecke algebras of classical type]{Spin nilHecke algebras of classical type}

\author{Ian T.\ Johnson}

\author{Weiqiang Wang}
\address{Department of Mathematics, University of Virginia, Charlottesville, VA 22904, USA}
\email{ij6fd@virginia.edu (Johnson), ww9c@virginia.edu (Wang)}

\maketitle

\begin{abstract}  
We formulate and study the spin nilHecke algebras ${}^\mathfrak{b}\!{\mathrm{NH}}_n^{\text{--}}$ and ${}^\mathfrak{d}\!{\mathrm{NH}}_n^{\text{--}}$ of type B/D, which differ from the usual nilHecke algebras by some odd signs. 
The type B spin nilHecke algebra is a nil version of the spin type B Hecke algebra introduced earlier by the second author and Khongsap, but not for the type D one. 
We construct faithful polynomial representations $\mathrm{Pol}_n^{\text{--}}$ of the nilHecke algebras via odd Demazure operators. We  formulate the spin Schubert polynomials, and use them to show that the spin nilHecke algebras are matrix algebras with entries in a subalgebra of $\mathrm{Pol}_n^{\text{--}}$ consisting of spin symmetric polynomials.  All these results have their counterparts for the usual nilHecke algebras over the rational field.  Our work is a generalization of results of Lauda and Ellis-Khovanov-Lauda in usual/spin type A. 
\end{abstract}

\setcounter{tocdepth}{1}
\tableofcontents

\section{Introduction}

\subsection{Background}

Affine Hecke algebras and their degenerations \cite{Dr86, Lu89} have many applications in various aspects of representation theory. The nil versions of the degenerate affine Hecke algebras, also known as nilHecke algebras, play a fundamental role in Schubert calculus (cf. \cite{Ku02, FK96}), and the type A nilHecke algebra is a basic ingredient in KLR categorification (cf. \cite{La08}). 

The Schur multiplier (i.e., the second cohomology of a group) arises in projective representations, and the Schur multiplier of an arbitrary finite Weyl group $W$ was computed by Ihara and Yokonuma \cite{IY65} (also cf.  \cite{Kar87}).
Given a 2-cocycle $\alpha$ on $W$, the corresponding twisted (or spin) group algebra $\Q W^\alpha$ admits a Coxeter type presentation, which is almost identical to the standard Coxeter presentation modulo some sign differences; cf. \cite{KW09}. 
The spin (i.e., projective) representation theory of the symmetric groups, equivalently the linear representation theory of the spin symmetric group algebras, was developed by Schur, and rich algebraic combinatorics such as Schur $Q$-functions arises from this.

The spin Hecke algebra of type A (called a degenerate spin affine Hecke algebra then) was introduced by the second author in  \cite[\S3.3]{Wa09}, and it is Morita super-equivalent to the degenerate affine Hecke-Clifford algebra of Nazarov \cite{Na97}. Subsequently the spin Hecke algebras of type B and D were introduced by Khongsap and the second author \cite{KW08}, associated to the ``most nontrivial" 2-cocycle of the corresponding Weyl group. These algebras look almost identical to the degenerate affine Hecke algebras of classical type in \cite{Lu89}, except some odd signs in the defining relations. A remarkable feature is that the polynomial algebras are now replaced by skew-polynomial algebras. The spin and the usual degenerate Hecke algebras have formally the same PBW basis. 

Just as for the degenerate affine Hecke algebras, the spin Hecke algebras admit nil versions as well.
The spin nilHecke algebras of type A, denoted in this paper by $\aodd{\NH}_n$, were rediscovered and studied in depth by Ellis, Khovanov and Lauda in \cite{EKL14}  (called the odd nilHecke algebra  in {\em loc. cit.}). It also reappeared in \cite{KKT16} as a basic building block of a new class of (spin) quiver Hecke superalgebras.

\subsection{The odd/spin type A results}

Let us review some main results of \cite{EKL14} on the spin type A nilHecke algebras $\aodd{\NH}_n$, which are most relevant to our current work.

A faithful (skew-)polynomial representation of $\aodd{\NH}_n$ was constructed via odd Demazure operators. Ellis, Khovanov and Lauda \cite{EKL14} then constructed the ring of odd/spin symmetric polynomials $\aodd{\Lambda}_n$ via odd Demazure operators as a subalgebra of $\aodd{\NH}_n$, and showed that $\aodd{\NH}_n$ is isomorphic to a matrix algebra of size $n!$ with entries in $\aodd{\Lambda}_n$. The sum over all $n$ of Grothendieck groups of $\Z$-graded projective $\aodd{\Lambda}_n$-modules (with the $\Z_2$-grading forgotten), $K_0(\aodd{\NH}) = \bigoplus_{n \geq 0} K_0(\aodd{\NH}_n)$, is shown  to be a twisted bialgebra isomorphic to half the quantum group of rank one $\U_{q}^+(\mathfrak{sl}_2)$.

With the $\Z_2$-grading turned on, this bialgebra isomorphism was subsequently upgraded to an isomorphism with half the quantum covering algebra of rank one $\U_{q,\pi}^+ (\mathfrak{sl}_2)$ in \cite{HW15}, where $\pi$ with $\pi^2=1$ counts the parity $\Z_2$-grading. Note the specialization of $\U_{q,\pi}^+ (\mathfrak{sl}_2)$ at $\pi=1$ becomes $\U_{q}^+ (\mathfrak{sl}_2)$.

All the above results have parallels for the usual  type A nilHecke algebras. The matrix algebra identification for the type A nilHecke algebra was established in \cite{La08}. 

\subsection{The goal}
The goal of this paper is to formulate and establish in the framework of spin type B/D nilHecke algebras generalizations of some main constructions and results (modulo the diagrammatics) of \cite{EKL14} in type A. 

The spin nilHecke algebra $\bodd{\NH}_n$ of type B studied in this paper is exactly the nil version of the corresponding spin Hecke algebra of \cite{KW08}. However the spin nilHecke algebra $\dodd{\NH}_n$ of type D is new as it is {\em not} the nil version of the corresponding spin Hecke algebra therein (which will be denoted by $\dodd{\NH}_{n,\texttt{kw}}$ in this paper). Instead it is associated with a different 2-cocyle of the Weyl group $D_n$; see  \S\ref{subsec:HnH} for the comparison of the two different type D nilHecke algebras. 
These spin type B/D nilHecke algebras are $\Z\times \Z_2$-graded, and they contain as a subalgebra the spin type A nilHecke algebra $\aodd{\NH}_n$. 

%
%
\subsection{The main results}

Let us describe in some detail the main results of this paper section-wise. 

\begin{enumerate}
\item
We construct the (skew-)polynomial representations $\Pol_n^-$ of the spin nilHecke algebras $\bodd{\NH}_n$  and $\dodd{\NH}_n$, where the nilCoxeter generators $\pso_i$ ($1\le i \le n-1$), $\pbo_n$, $\pdo_n$ act by type B/D odd Demazure operators; see Theorems~\ref{thm:rel} and \ref{thm:typedrelations}.

\item
We introduce the rings of spin symmetric polynomials, $\bodd{\Lambda}_n$ and $\dodd{\Lambda}_n$ as the intersections of the kernels of the odd Demazure operators. We show that $\bodd{\Lambda}_n$ and $\dodd{\Lambda}_n$ are polynomial rings in $n$ generators.  See \thmref{thm:bodd} and \propref{prop:dodd}. The rings of spin type B/D symmetric polynomials turn out to be not as odd as in the type A case.

\item
We introduce the spin type B/D Schubert polynomials, parametrized by $B_n$ and $D_n$, respectively. We show that the polynomial representation $\Pol_n^-$ is faithful and the PBW basis theorem holds for the spin nilHecke algebras $\bodd{\NH}_n$  and $\dodd{\NH}_n$. See Propositions~\ref{prop:linearrel} and \ref{prop:linearreld}.

\item
We show that $\Pol_n^-$ is a free $\bodd{\Lambda}_n$-module with these spin Schubert polynomials as a basis; see \propref{prop:bfreemodule}. We establish a similar (slightly weaker) statement in type $D$ over the rational field $\Q$ (instead of being over the ring $\Z$); see \propref{prop:dfreemodule}. We show in \thmref{thm:biso} that the spin type B nilHecke algebra $\bodd{\Lambda}_n$ is isomorphic to a matrix algebra of size $|B_n|$ with entries in the ring of spin symmetric polynomials $\bodd{\Lambda}_n$. For a similar result in type D over $\Q$, see \thmref{thm:diso}.

\item
We show in \propref{prop:module} that $K_0(\bodd{\NH}) = \bigoplus_{n \geq 0} K_0(\bodd{\NH}_n)$ is a bialgebra module over the twisted bialgebra $K_0(\aodd{\NH})$, where the twisted bialgebra $K_0(\aodd{\NH})$ is isomorphic to the quantum covering algebra of rank one $\U_{q,\pi}^+ (\mathfrak{sl}_2)$ \cite{EKL14, HW15}. A similar result holds for type D. 

\item
In Appendix~\ref{appendix:A} we revisit the usual nilHecke algebras associated to arbitrary Weyl groups $W$. The results (1)-(4) hold for nilHecke algebras associated to any Weyl group $W$ over $\Q$. All these are well known, with a possible exception of (4) on  the matrix algebra identification. As we cannot find this explicitly in the literature (except the type A case in \cite{La08}), we offer two proofs, one algebraic and one geometric. The geometric proof was suggested to us by Ben Webster and Peng Shan separately. The algebraic proof is similar to the ones we gave for the spin type D nilHecke algebra. See Remark~\ref{rem:integral} for a possible strengthening over $\Z$ (as in type A \cite{La08}). The type B/D results are occasionally used to provide shortcuts in some proofs in earlier sections in spin nilHecke algebras.
\end{enumerate}

For the convenience of the reader, the different types of spin nilHecke algebras, their Demazure operators, spin symmetric polynomials, and  matrix algebra identifications are summarized in the following Table~\ref{table:spinH}, where we set $\pbo_i =\pdo_i =\pso_i$, for $1\le i\le n-1$. 

\begin{table}[h]
\caption{Spin type A/B/D nilHecke algebras}
\label{table:spinH}
\begin{tabular}{| c | c | c | c |}
\hline
{\Small Type}
& 
{\Small Demazure operators} 
&
{\Small Spin symmetric polynomials}
&
{\Small Spin nilHecke as matrix algebras}
\\
\hline
A
 &
$\pso_i, 0\le i\le n-1$
	&
$\aodd{\Lambda}_n = \bigcap_{i=1}^{n-1} \ker(\pso_i)$
&
$\aodd{\NH}_n  \cong \Mat_{n!}(\aodd{\Lambda}_n)$
\\
\hline
B
& 
$\pbo_i, 0\le i\le n$
&
$\bodd{\Lambda}_n= \bigcap_{i=1}^{n} \ker(\pbo_i)$ 
& 
$\bodd{\NH}_n  \cong \Mat_{|B_n|}(\bodd{\Lambda}_n)$
\\
\hline
D
&
$\pdo_i, 0\le i\le n$
&
$\dodd{\Lambda}_n= \bigcap_{i=1}^{n} \ker(\pdo_i)$ 
&
$\dodd{\NH}_{n,\Q}  \cong \Mat_{|D_n|}  (\dodd{\Lambda}_{n,\Q} )$
\\
\hline
\end{tabular}
\newline
\smallskip
\end{table}

\subsection{Future works}

There are several natural directions to pursue in the theory of spin Hecke algebras.

The spin Hecke algebras \cite{Wa09, KW08} are associated to the most nontrivial 2-cocycles of Weyl groups of classical type. The type D construction in this paper suggests there might exist a more general class of spin Hecke algebras (and double affine versions too) associated to more general 2-cocycles.  

An open basic question is to develop a theory of spin Hecke algebras associated to exceptional Weyl groups. 

Note that our (spin) type B Schubert polynomials are not the ones defined in \cite{FK96, BH95}, and our type B/D Schubert polynomial associated to the longest Weyl group element is a monomial (as in type A). Our definition of Schubert polynomials is crucial in our proof that $\bodd{\NH}_n$ (or its even counterpart) is a matrix algebra over $\Z$, but it may not have a geometric interpretation in terms of cohomology of flag varieties as in \cite{BH95, FK96}. From a combinatorial viewpoint, it will be interesting to see if our version (or another suitable version) of (spin) type B/D Schubert polynomials has additional favorable properties, such as stabilization as $n$ goes to infinity. It will be very interesting to explore spin double Schubert polynomials. 

Lauda and Russell \cite{LR14} developed an intriguing odd Springer theory, building on the spin type A nilHecke algebra and Ellis-Khovanov's theory of odd symmetric polynomials. It will be interesting to see if there is a spin/odd Springer theory of type B and D. 


\vspace{.3cm}

{\bf Acknowledgement.} 
The research of WW  and the undergraduate research of IJ are partially supported by the NSF grants DMS-1405131 and DMS-1702254. We thank Mike Reeks for his help with mentoring IJ in this research. We are also thankful to Ben Webster and Peng Shan for providing a geometric proof of \thmref{thm:iso}.

\section{Spin nilHecke algebras and polynomial representations}
\label{sec:algebradefs}

In this section we introduce the spin nilHecke algebras, $\bodd{\NH}_n$ and $\dodd{\NH}_n$, of  type B and D. 
We construct the (skew-)polynomial representations $\tyodd{\Pol}_n$ of 
the spin nilHecke algebras $\bodd{\NH}_n$ and $\dodd{\NH}_n$ via odd Demazure operators.

\subsection{Spin nilHecke algebras}

We denote by $\tyodd{\Pol}_n$ the skew-polynomial algebra in $n$ variables, that is, the $\Z$-algebra generated by $x_1,\dots,x_n$, subject to the relations:
\begin{equation}
 \label{eq:anti}
x_ix_j + x_jx_i = 0, \quad \forall i \neq j. 
\end{equation}

\begin{definition}
	\label{def:oddtypebnh}
   Let $n\ge 1$.  The {\em spin type B nilHecke algebra} $\bodd{\NH}_n$ is the unital $\Z$-algebra generated by  $x_1,\dots,x_n$ and $\pso_1,\dots,\pso_{n-1},\pbo_n$, subject to the relation \eqref{eq:anti} and the following relations \eqref{eq:arel.1}--\eqref{eq:arel.5} and \eqref{eq:brel.1}--\eqref{eq:brel.5}, for $1\le i\le n-1$:
	\begin{subequations}
		\begin{align}
			(\pso_i)^2 &= 0, \label{eq:arel.1} \\
			\pso_i\pso_{i+1}\pso_i &= \pso_{i+1}\pso_i\pso_{i+1}, \label{eq:arel.2} \\
			\pso_i\pso_j + \pso_j\pso_i &= 0  \quad (|i-j|>1), \label{eq:arel.3} \\
			x_i\pso_i + \pso_i x_{i+1} &= 1,  
			\quad            \pso_i x_i + x_{i+1}\pso_i = 1, \label{eq:arel.4} \\
			 \pso_i x_j + x_j\pso_i &= 0 \quad (j\neq i,i+1);  \label{eq:arel.5}
		\end{align}
	\end{subequations}
	\begin{subequations}
		\begin{align}
			(\pbo_n)^2 &= 0, \label{eq:brel.1} \\
			\pbo_n\pso_{n-1}\pbo_n\pso_{n-1} &= -\pso_{n-1}\pbo_n\pso_{n-1}\pbo_n, \label{eq:brel.2} \\
			\pbo_n\pso_i + \pso_i\pbo_n &= 0 \qquad(1\le i\leq n-2), \label{eq:brel.3} \\
			\pbo_n x_n + x_n\pbo_n &= 1, \label{eq:brel.4} \\
			\pbo_n x_i + x_i\pbo_n &= 0 \qquad (1\le i\leq n-1). \label{eq:brel.5}
		\end{align}
	\end{subequations}

	The {\em spin type B nilCoxeter algebra} $\bodd{\NC}_n$ is defined to be the subalgebra of $\bodd{\NH}_n$ generated by $\pso_i$ for $1 \leq i < n$ and $\pbo_n$.
\end{definition}

\begin{definition} 
	\label{def:oddtypednh}
  Let $n\ge 2$.   The {\em spin type D nilHecke algebra} $\dodd{\NH}_n$ is the unital $\Z$-algebra generated by $x_1,\dots,x_n$ and $\pso_1,\dots,\pso_{n-1},\pdo_n$, subject to the relations \eqref{eq:anti}, \eqnref{eq:arel.1}--\eqnref{eq:arel.5}, and the following additional relations \eqnref{eq:drel.1}--\eqnref{eq:drel.7} for $\pdo_n$:
	\begin{subequations}
		\begin{align}
			(\pdo_n)^2 &= 0, \label{eq:drel.1} \\
			\pdo_n\pso_{n-2}\pdo_n &=   \pso_{n-2}\pdo_n\pso_{n-2}, \label{eq:drel.2} \\
			\pdo_n\pso_i + \pso_i\pdo_n &= 0 \qquad (1\le i \le n-3), \label{eq:drel.3} \\
			\pdo_n\pso_{n-1} - \pso_{n-1}\pdo_n &= 0,   \label{eq:drel.4} \\
			x_{n-1}\pdo_n - \pdo_n x_n  &=  1,   \quad 
			\pdo_n x_{n-1} - x_n\pdo_n  =  1,    \label{eq:drel.6} \\
			\pdo_n x_i + x_i\pdo_n &= 0 \qquad (1\le i \le n-2). \label{eq:drel.7}
		\end{align}
	\end{subequations}
	The {\em spin type D nilCoxeter algebra} $\dodd{\NC}_n$ is defined to be the subalgebra of $\dodd{\NH}_n$ generated by $\pso_i$ for $1 \leq i < n$ and $\pdo_n$.
\end{definition}

	We introduce a $\Z$-grading $|\cdot |$ on the algebra $\bodd{\NH}_n$ by declaring 
	\begin{equation} \label{eq:grading}
	|x_i|=2, \qquad |\pso_j| =|\pbo_n|=-2, 
	\end{equation} 
	for all possible $i,j$.
Similarly, the $\Z$-grading $|\cdot |$ on the algebra  $\dodd{\NH}_n$ is given by declaring $|x_i|=2$, $|\pso_j| =|\pdo_n|=-2$, for all possible $i,j$.
 
We also introduce a parity $\Z_2$-grading $p(\cdot)$ on both $\bodd{\NH}_n$ and $\dodd{\NH}_n$ by declaring all generators $x_i, \pso_i$, $\pbo_n$, and $\pdo_n$ to have parity $1$; that is, $\bodd{\NH}_n$ and $\dodd{\NH}_n$ are naturally superalgebras with all generators being odd.

\subsection{Spin Hecke vs spin nilHecke algebras}
  \label{subsec:HnH}
  
The spin Hecke algebra (also called degenerate spin affine Hecke algebra) of type $B_n$  with 2 parameters $u_1, u_2$, denoted by $\mathfrak H_{B_n}^-$, was introduced in \cite[Definition~4.3]{KW08}; here, we set $u_1=u_2=1$ (see Remark~\ref{rem:unequal} below). Using our current notation the definition of $\mathfrak H_{B_n}^-$ differs from $\bodd{\NH}_n$  only  by substituting the ``nil" relations \eqref{eq:arel.1} and \eqref{eq:brel.1} with the relations
\[
  (\pbo_n)^2 = 1, \qquad      (\pso_i)^2 = 1 \;\; (1\le i \le n-1). 
\]
The spin Hecke algebra $\mathfrak H_{B_n}^-$ is naturally a filtered algebra using the degrees given by \eqref{eq:grading}. Then $\bodd{\NH}_n$  is the associated graded of the filtered algebra $\mathfrak H_{B_n}^-$ (i.e., the nil version of $\mathfrak H_{B_n}^-$). The PBW basis theorem was established in \cite{KW08} for  $\mathfrak H_{B_n}^-$, and so the PBW basis theorem for $\bodd{\NH}_n$ follows; this also follows from our results later in this paper.

Similarly, we also have the spin Hecke algebra of type D, $\mathfrak H_{D_n}^-$, in \cite[Definition~4.1]{KW08}, and it admits a nil version, denoted here by $\dodd{\NH}_{n,\texttt{kw}}$. 
The algebra $\dodd{\NH}_{n,\texttt{kw}}$ is generated by  $x_1,\dots,x_n$ and $\pso_1,\dots,\pso_{n-1},\pdo_n$, subject to the relations \eqref{eq:drel.1}--\eqref{eq:drel.3}, \eqref{eq:drel.7}, and the following relations \eqref{eq:drel.4W}--\eqref{eq:drel.6W} (in place of \eqref{eq:drel.4}--\eqref{eq:drel.6}):
\begin{subequations}
  \begin{align}
   \pdo_n\pso_{n-1} \blue{+} \pso_{n-1}\pdo_n &= 0,   \label{eq:drel.4W} \\
			x_{n-1}\pdo_n \blue{+}  \pdo_n x_n &= 1,   \quad  
			\pdo_n x_{n-1} \blue{+} x_n\pdo_n =   1. \label{eq:drel.6W} 
   \end{align}
\end{subequations}
Note $\dodd{\NH}_{n,\texttt{kw}} \not \cong \dodd{\NH}_n$. Indeed, the action of $\dodd{\NH}_{n,\texttt{kw}}$ on its polynomial representation $\Pol^-_n$ (which is the induced representation from the trivial module of the spin nilCoxter subalgebra) is not faithful (as one checks that $\pdo_n=\pso_{n-1}$), and the action factors through $\aodd{\NH}_n$.  

The finite spin nilCoxeter algebras for $\dodd{\NH}_{n,\texttt{kw}}$ and for $\dodd{\NH}_n$ are associated to distinct 2-cocycles of ${D_n}$ in \cite[Table~2.2]{KW09} (compare \eqref{eq:drel.4} and \eqref{eq:drel.4W}), and hence are non-isomorphic.

Note that $\bodd{\NH}_n$ (and respectively, $\dodd{\NH}_n$, $\dodd{\NH}_{n,\texttt{kw}}$) contains a subalgebra $\aodd{\NH}_n$, which is generated by $x_1,\dots, x_n$, $\pso_1, \dots, \pso_{n-1}$. The algebra  $\aodd{\NH}_n$ is a nil version of the spin Hecke algebra of type $A_{n-1}$ introduced in \cite[\S3.3]{Wa09} and rediscovered  in \cite{EKL14, KKT16}.

\begin{rem}     \label{rem:2D}
By a detailed analysis one can show that up to isomorphism the algebras $\dodd{\NH}_n$ and $\dodd{\NH}_{n,\texttt{kw}}$ are the only possible spin type D nilHecke algebras which contain $\aodd{\NH}_n$ as  a subalgebra. The algebra $\dodd{\NH}_{n,\texttt{kw}}$ will not be considered further in this paper. 
\end{rem}

\begin{rem}
In \cite{Wa09, KW08}, the Hecke-Clifford algebras of classical type were formulated (the definition of type A Hecke-Clifford algebra was due to Nazarov \cite{Na97}) and shown to be Morita super-equivalent to the spin Hecke algebras of the corresponding type. Similar results are valid for the nil versions. Our new nilHecke algebra $\dodd{\NH}_n$ also suggests the existence of spin/Hecke-Clifford algebras associated to more general $2$-cocycles for the finite Weyl groups; cf. \cite[Table~2.2]{KW09}. 
\end{rem}

%

	For a $\Z\times\Z_2$-graded algebra $A$ with a homogeneous basis $\mathfrak B$, we define its graded rank to be (cf. \cite{HW15})
	\begin{equation}  \label{def:grading}
		\rk_{q,\pi}(A) = \sum_{b\in \mathfrak B} q^{|b|}\pi^{p(b)},
	\end{equation}
	where $\pi$ satisfies 
	\[
	\pi^2=1.
	\]
When we need only consider the $\Z$-grading by forgetting the $\Z_2$-grading (or when the $\Z_2$-grading is trivial), we will use the following graded rank:
\[
\rk_q(A)=\rk_{q,1}(A) = \sum_{b\in \mathfrak B} q^{|b|}.
\]

\subsection{Odd Demazure operators of type B}
\label{subsec:demazureB}

We define
 the endomorphisms $\sso_i$ on $\tyodd{\Pol}_n$, for $1 \leq i \leq n-1$, by letting (cf. \cite[(2.2)]{EKL14})
	\begin{equation}
		\label{eq:sso}
		\sso_i(x_j) = \begin{cases}
			-x_{i+1}, &\text{for }j = i \\
			-x_i, &\text{for }j = i+1 \\
			-x_j, &\text{otherwise}.
		\end{cases}
	\end{equation}
In addition, we define
 the endomorphism $\sbo_n$ on $\tyodd{\Pol}_n$ such that 
	\begin{equation}
		\sbo_n(x_j) = -x_j \qquad \forall j.
	\end{equation}
It is straightforward to see that $\sbo_n$ is well defined, i.e., $\sbo_n(x_ix_j + x_jx_i) = 0$ for $i\neq j$.

\begin{lem}
	The operators $\sbo_n$ and $\sso_i$, for $1 \leq i \leq n-1$, satisfy the type $B_n$ Coxeter group relations.
\end{lem}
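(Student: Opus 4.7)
The plan is to verify each defining relation of the Coxeter group $W(B_n)$ by checking equality on the algebra generators $x_1, \ldots, x_n$. This reduction is legitimate because both $\sso_i$ and $\sbo_n$ are algebra endomorphisms of $\tyodd{\Pol}_n$: well-definedness of $\sbo_n$ has just been noted in the excerpt, and the analogous check for $\sso_i$ is immediate, since the two minus signs introduced by $\sso_i$ cancel in the defining skew relation \eqref{eq:anti}. Consequently, both sides of any Coxeter relation are algebra endomorphisms, and so coincide as soon as they agree on each generator.

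The relations to check are: (i) $\sso_i^2 = \sbo_n^2 = 1$; (ii) the commutation relations $\sso_i\sso_j = \sso_j\sso_i$ for $|i-j|>1$ and $\sso_i\sbo_n = \sbo_n\sso_i$ for $i \le n-2$; (iii) the length-$3$ braid relation $\sso_i\sso_{i+1}\sso_i = \sso_{i+1}\sso_i\sso_{i+1}$; and (iv) the length-$4$ braid relation $\sbo_n\sso_{n-1}\sbo_n\sso_{n-1} = \sso_{n-1}\sbo_n\sso_{n-1}\sbo_n$. The squarings in (i) are immediate sign cancellations on every $x_j$, and the commutation relations in (ii) follow because in each case the two operators affect essentially disjoint index sets with compatible signs. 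Item (iii) involves only the type A generators $\sso_i$, and is exactly the spin type A result already established in \cite{EKL14}, so no new work is needed there.

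The main computation, and the only one requiring careful bookkeeping, is (iv). I would verify it by evaluating both sides on an arbitrary generator $x_j$ and tracing the four-fold composite. When $j \notin \{n-1, n\}$, each of the four operators simply multiplies $x_j$ by $-1$, so both sides yield $(-1)^4 x_j = x_j$. When $j \in \{n-1, n\}$, the operator $\sso_{n-1}$ alternately swaps the indices $n-1 \leftrightarrow n$ while introducing a sign, and $\sbo_n$ negates in place; a step-by-step trace shows the signs cancel pairwise and both sides return $x_j$ unchanged. The hardest part, such as it is, amounts to disciplined sign bookkeeping in (iv); no deeper idea is required.
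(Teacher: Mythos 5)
Your proposal is correct and follows essentially the same route as the paper: cite \cite{EKL14} for the type A relations among the $\sso_i$, then verify $(\sbo_n)^2=1$, the commutations $\sbo_n\sso_i=\sso_i\sbo_n$ ($i\le n-2$), and the length-four braid relation by direct computation, the reduction to checking on the generators $x_j$ being justified because all the operators are algebra endomorphisms of $\tyodd{\Pol}_n$. The only difference is cosmetic: you spell out the endomorphism reduction that the paper leaves implicit.
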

\begin{proof}
It is known (cf. \cite{EKL14}) that $\sso_i$ ($1 \leq i \leq n-1$) given by \eqref{eq:sso} satisfy the Coxeter relations for $S_n$.
In addition a direct computation shows that
	\begin{align*}
		(\sbo_n)^2 = 1, \qquad 
		\sbo_n\sso_i &= \sso_i\sbo_n \qquad(i \leq n-2), \\
		\sbo_n\sso_{n-1}\sbo_n\sso_{n-1} &= \sso_{n-1}\sbo_n\sso_{n-1}\sbo_n.
	\end{align*}
  The lemma is proved. 
\end{proof}

Now, we are in a position to define the type B odd Demazure operators.

\begin{definition}
	\label{def:pbo}
	The {\em type B odd Demazure operators} $\pso_i$ ($1\leq i\le n-1$) and $\pbo_n$ are defined as $\Z$-linear operators 	on $\tyodd{\Pol}_n$ which satisfy \eqref{eq:pbo}--\eqref{eq:leibnizb:B} below:
	\begin{align}
		\pso_i(1) = 0,  \qquad 
		\pso_i(x_j) &= \begin{cases}
			1, &\text{for }j = i, i+1 \\
			0, &\text{otherwise},
		\end{cases} \label{eq:pbo} \\
		\pbo_n(x_j) &= \begin{cases}
			1, &\text{for }j = n \\
			0, &\text{otherwise},
		\end{cases}
	\end{align}
	and the Leibniz rule:
	\begin{align}  
		\pso_i(fg) &= \pso_i(f)g + \sso_i(f)\pso_i(g),   \label{eq:leibnizb} \\
		\pbo_n(fg) &= \pbo_n(f)g + \sbo_n(f)\pbo_n(g), \quad \forall f, g \in \tyodd{\Pol}_n.     \label{eq:leibnizb:B}
	\end{align}
\end{definition}
Note that $\pso_i$ ($1 \leq i \leq n-1$) are the type A odd Demazure operators defined in~\cite[(2.3)-(2.4)]{EKL14}.

\begin{thm}
	\label{thm:rel}
	The operators $\pso_1,\dots,\pso_{n-1},\pbo_n$ in \eqref{eq:pbo}--\eqref{eq:leibnizb:B}, along with the left multiplication operators $x_1,\dots,x_n$, define a representation of $\bodd{\NH}_n$ on $\tyodd{\Pol}_n$.
\end{thm}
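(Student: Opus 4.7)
The plan is to verify each defining relation of $\bodd{\NH}_n$ on the proposed operators acting on $\tyodd{\Pol}_n$. Since the type A relations \eqref{eq:arel.1}--\eqref{eq:arel.5} among $\pso_1,\ldots,\pso_{n-1}$ and $x_1,\ldots,x_n$ have already been established in \cite{EKL14}, the real task reduces to checking only the new relations \eqref{eq:brel.1}--\eqref{eq:brel.5} involving $\pbo_n$.

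First I would dispatch the interaction relations \eqref{eq:brel.4} and \eqref{eq:brel.5} between $\pbo_n$ and the $x_i$'s: the Leibniz rule \eqref{eq:leibnizb:B} applied to a product $x_i f$ amounts to the operator identity $\pbo_n \circ x_i = \pbo_n(x_i) + \sbo_n(x_i)\circ \pbo_n$, and substituting $\pbo_n(x_i) = \delta_{i,n}$ and $\sbo_n(x_i) = -x_i$ produces both relations instantly.

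Next, for \eqref{eq:brel.1} and \eqref{eq:brel.3}, I would use a ``twisted derivation plus algebra automorphism'' induction on degree. To prove $(\pbo_n)^2=0$, I first verify on generators $x_j$ that $\sbo_n\pbo_n + \pbo_n\sbo_n = 0$ (a one-line check: both sides evaluate to $\pm\delta_{j,n}$), and show this anticommutation propagates through products by combining Leibniz for $\pbo_n$ with the fact that $\sbo_n$ extends to an algebra automorphism of $\tyodd{\Pol}_n$. A double application of Leibniz then yields
\[
(\pbo_n)^2(fg) = (\pbo_n)^2(f)\,g + (\sbo_n\pbo_n + \pbo_n\sbo_n)(f)\,\pbo_n(g) + f\,(\pbo_n)^2(g),
\]
so induction on degree closes the case, since $(\pbo_n)^2$ kills all generators. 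Relation \eqref{eq:brel.3} follows by an entirely analogous argument: $\sbo_n\sso_i = \sso_i\sbo_n$ on generators extends multiplicatively, while $\pbo_n$ and $\pso_i$ (for $i\le n-2$) annihilate each other's ``active'' variables, so the Leibniz expansion of $(\pbo_n\pso_i + \pso_i\pbo_n)(fg)$ reduces to a sum of terms already handled by the inductive hypothesis.

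The hard part will be the braid-like relation \eqref{eq:brel.2}. Setting
\[
T := \pbo_n\pso_{n-1}\pbo_n\pso_{n-1} + \pso_{n-1}\pbo_n\pso_{n-1}\pbo_n,
\]
I would first observe that $T$ commutes with left multiplication by $x_i$ for every $i\le n-2$: both $\pbo_n$ and $\pso_{n-1}$ anticommute with such $x_i$ by the already-verified \eqref{eq:brel.5} and \eqref{eq:arel.5}, and four sign flips cancel. Writing any element of $\tyodd{\Pol}_n$ in the ordered-monomial form $x_1^{a_1}\cdots x_{n-2}^{a_{n-2}}\cdot x_{n-1}^{a_{n-1}}x_n^{a_n}$, the vanishing of $T$ on $\tyodd{\Pol}_n$ is therefore reduced to the vanishing of $T$ on the two-variable skew-polynomial subalgebra generated by $x_{n-1}$ and $x_n$. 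On that subalgebra the verification is carried out by direct computation on monomials $x_{n-1}^a x_n^b$: both sides of \eqref{eq:brel.2} lower total degree by $8$, so the identity is vacuous for $a+b<4$, and for $a+b\ge 4$ one expands the two iterated compositions via repeated Leibniz and tracks the resulting signs, the crucial cancellations coming from pairing the global sign in $\sbo_n$ against the transposition sign in $\sso_{n-1}$. This two-variable calculation is the real content of the theorem; all higher-$n$ information has been stripped away by the anticommutation reduction.
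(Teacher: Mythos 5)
Your handling of everything except \eqref{eq:brel.2} is correct, and in places cleaner than the paper's argument: \eqref{eq:brel.4}--\eqref{eq:brel.5} follow from the Leibniz rule exactly as in the paper, and your route to \eqref{eq:brel.1} via the identity $(\pbo_n)^2(fg)=(\pbo_n)^2(f)g+(\sbo_n\pbo_n+\pbo_n\sbo_n)(f)\pbo_n(g)+f(\pbo_n)^2(g)$ (using $(\sbo_n)^2=1$) together with the fact that $\sbo_n\pbo_n+\pbo_n\sbo_n$ is a twisted derivation vanishing on generators is a genuine streamlining of the paper's monomial-by-monomial induction. For \eqref{eq:brel.3} the same scheme works, but note that the Leibniz cross terms force you to prove the auxiliary identities $\sbo_n\pso_i+\pso_i\sbo_n=0$, $\pbo_n\sso_i+\sso_i\pbo_n=0$ and $\sbo_n\sso_i=\sso_i\sbo_n$ explicitly; your phrase about ``annihilating each other's active variables'' does not quite name these, though each is a one-line check. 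Your reduction of \eqref{eq:brel.2} to the subalgebra $\Z\langle x_{n-1},x_n\rangle$ is also sound: $T$ commutes with left multiplication by $x_i$ for $i\le n-2$, and that subalgebra is stable under $\pso_{n-1}$ and $\pbo_n$, whose restrictions are the rank-two odd Demazure operators.

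The gap is that the two-variable identity itself --- which, as you say, is the real content, namely the $n=2$ instance of \eqref{eq:brel.2} --- is asserted rather than proved. ``Expand the two iterated compositions via repeated Leibniz and track the signs'' is an infinite family of identities on the monomials $x_{n-1}^ax_n^b$ with substantial cancellation, and the hint about pairing the sign of $\sbo_n$ against that of $\sso_{n-1}$ is not an argument. This is exactly where the paper does its work: it proves \eqref{eq:brel.2} by induction on the degree of a monomial, treating separately monomials of the form $x_nf$ and $x_{n-1}f$ (the third case $x_if$, $i\le n-2$, is what your reduction eliminates), each time using \eqref{eq:brel.4}--\eqref{eq:brel.5} and the previously established relations to pull the four-fold composition across the leading variable. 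In your framework the same induction restricted to two variables, or alternatively closed formulas for $\pso_{n-1}$ and $\pbo_n$ on $x_{n-1}^ax_n^b$ in the spirit of Lemma~\ref{lem:pso} and Lemma~\ref{lem:pbo}, still has to be supplied; without it the theorem is not yet proved. A minor point: each composite lowers the monomial degree $a+b$ by $4$ (equivalently by $8$ in the paper's grading where $|x_i|=2$), so your base case ``vacuous for $a+b<4$'' is right, but the statement ``lower total degree by $8$'' mixes the two conventions.
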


\begin{proof}
	The proof  consists in showing that the relations given in~\defref{def:oddtypebnh} hold.

	The first set of relations~\eqnref{eq:arel.1}--\eqnref{eq:arel.5}, corresponding to type A, have already been proved in~\cite[Proposition~2.1]{EKL14}.
	Thus, we need only prove the remaining relations~\eqnref{eq:brel.1}--\eqnref{eq:brel.5}.

We first prove~\eqnref{eq:brel.4} and~\eqnref{eq:brel.5}, as they will be useful in the proofs of the remaining three relations.
	Let $f \in \tyodd{\Pol}_n$. Then, by the Leibniz rule,
   $\pbo_n(x_n f) = f - x_n\pbo_n(f),$  so that $\pbo_nx_n + x_n\pbo_n = 1$, whence \eqnref{eq:brel.4}.
	Similarly, for $1\le i \le n-1$,
	\begin{equation*}
		\pbo_n(x_i f) = 0 - x_i\pbo_n(f) = x_i\pbo_n(f),
	\end{equation*}
	so that $\pbo_nx_i + x_i\pbo_n = 0$, whence \eqnref{eq:brel.5}.

	To prove~\eqnref{eq:brel.1}--\eqnref{eq:brel.3}, it suffices to prove them in the case where each relation is applied to a monomial; we do so by induction on the degree of the monomial.

	For~\eqnref{eq:brel.1}, the base case is trivial, i.e., $(\pbo_n)^2(1) = 0$.
	For the inductive step, we divide into two cases.
	In the first case, the monomial is of the form $x_n f$, where $f$ is a monomial.
	Then, using~\eqnref{eq:brel.4} and the inductive assumption, we have
	\begin{align*}
		(\pbo_n)^2(x_n f) &= \pbo_n(f - x_n\pbo_n(f)) \\
		&= \pbo_n(f) - \pbo_n(f) + x_n(\pbo_n)^2(f)  
		= x_n(\pbo_n)^2(f) = 0.
	\end{align*}
	In the second case, the monomial is of the form $x_i f$ for $i < n$, and we have
	\begin{equation*}
		(\pbo_n)^2(x_i f) = \pbo_n(-x_i \pbo_n(f)) = x_i(\pbo_n)^2(f) = 0,
	\end{equation*}
	using \eqnref{eq:brel.5} and induction.

	For~\eqnref{eq:brel.2}, we again have a trivial base case.
   There are now three cases, on the first factor in the monomial.
	In the first case, we consider a monomial of the form
	$x_n f$:
	\begin{align*}
		\pbo_n\pso_{n-1}\pbo_n\pso_{n-1}(x_n f) &= \pbo_n\pso_{n-1}\pbo_n(f - x_{n-1}\pso_{n-1}(f)) \\
		&= \pbo_n\pso_{n-1}\pbo_n(f) + \pbo_n\pso_{n-1}(x_{n-1}\pbo_n\pso_{n-1}(f)) \\
		&= \pbo_n\pso_{n-1}\pbo_n(f) + \pbo_n \Big(\pbo_n\pso_{n-1}(f) - x_n\pso_{n-1}\pbo_n\pso_{n-1}(f) \Big) \\
		&= \pbo_n\pso_{n-1}\pbo_n(f) + (\pbo_n)^2\pso_{n-1}(f) \\
		&\qquad - \pso_{n-1}\pbo_n\pso_{n-1}(f) + x_n\pbo_n\pso_{n-1}\pbo_n\pso_{n-1}(f) \\
		&= \pbo_n\pso_{n-1}\pbo_n(f) - \pso_{n-1}\pbo_n\pso_{n-1}(f) + x_n\pbo_n\pso_{n-1}\pbo_n\pso_{n-1}(f),
	\end{align*}
	and
	\begin{align*}
		\pso_{n-1}\pbo_n\pso_{n-1}\pbo_n(x_n f) &= \pso_{n-1}\pbo_n\pso_{n-1}(f - x_n\pbo_n(f)) \\
		&= \pso_{n-1}\pbo_n\pso_{n-1}(f) - \pso_{n-1}\pbo_n \left(\pbo_n(f) - x_{n-1}\pso_{n-1}\pbo_n(f) \right) \\
		&= \pso_{n-1}\pbo_n\pso_{n-1}(f) - \pso_{n-1}(\pbo_n)^2(f) - \pso_{n-1}(x_{n-1}\pbo_n\pso_{n-1}\pbo_n(f)) \\
		&= \pso_{n-1}\pbo_n\pso_{n-1}(f) - \pbo_n\pso_{n-1}\pbo_n(f) + x_n\pso_{n-1}\pbo_n\pso_{n-1}\pbo_n(f).
	\end{align*}
	We arrive at $\pbo_n\pso_{n-1}\pbo_n\pso_{n-1}(x_n f) = -\pso_{n-1}\pbo_n\pso_{n-1}\pbo_n(x_n f)$ by the inductive assumption and the above computations, completing this case.

	In the second case, we consider a monomial of the form $x_{n-1} f$.
	The computations are very similar to the above; for completeness, they are given below.
	\begin{align*}
		\pbo_n\pso_{n-1}\pbo_n\pso_{n-1}(x_{n-1} f) &= \pbo_n\pso_{n-1}\pbo_n(f - x_n\pso_{n-1}(f)) \\
		&= \pbo_n\pso_{n-1}\pbo_n(f) - \pbo_n\pso_{n-1} \left(\pso_{n-1}(f) - x_n\pbo_n\pso_{n-1}(f) \right) \\
		&= \pbo_n\pso_{n-1}\pbo_n(f) - \pbo_n(\pso_{n-1})^2(f) \\
		&\quad + \pbo_n \left(\pbo_n\pso_{n-1}(f) - x_{n-1}\pso_{n-1}\pbo_n\pso_{n-1}(f) \right) \\
		&= \pbo_n\pso_{n-1}\pbo_n(f) + x_{n-1}\pbo_n\pso_{n-1}\pbo_n\pso_{n-1}(f),
	\end{align*}
	and
	\begin{align*}
		\pso_{n-1}\pbo_n\pso_{n-1}\pbo_n(x_{n-1} f) &= \pso_{n-1}\pbo_n\pso_{n-1}(-x_{n-1}\pbo_n(f)) \\
		&= \pso_{n-1}\pbo_n(-\pbo_n(f) + x_n\pso_{n-1}\pbo_n(f)) \\
		&=   \pso_{n-1} \left(\pso_{n-1}\pbo_n(f) - x_n\pbo_n\pso_{n-1}\pbo_n(f) \right) \\
		&= -\pbo_n\pso_{n-1}\pbo_n(f) + x_{n-1}\pso_{n-1}\pbo_n\pso_{n-1}\pbo_n(f).
	\end{align*}
	Again, we arrive at the desired conclusion via induction.

	The final case, 
	$\pbo_n\pso_{n-1}\pbo_n\pso_{n-1}(x_i f) 
	= -\pso_{n-1}\pbo_n\pso_{n-1}\pbo_n(x_if)$, for $i \leq n-2$,  is easily verified. This completes the proof of ~\eqnref{eq:brel.2}.

	Finally, for~\eqnref{eq:brel.3} with  $i \le n-2$, we again induct on the degree of monomials. First consider a monomial of the form $x_n f$.
	Then
	\begin{align*}
		\pbo_n\pso_i(x_n f) &= -\pbo_n(x_n\pso_i(f)) 
		= -\pso_i(f) + x_n\pbo_n\pso_i(f),
	\\
		\pso_i\pbo_n(x_n f) &= \pso_i(f - x_n\pbo_n(f)) 
		= \pso_i(f) + x_n\pso_i\pbo_n(f).
	\end{align*}
	The result follows in this case by induction.
	The other three cases, namely the monomials of the forms $x_i f$, $x_{i+1} f$, and $x_k f$ for $k \neq n, i, i+1$, are similar, and will be skipped. 
\end{proof}
\subsection{Odd Demazure operators of type D}


Define an endomorphism $\sdo_n$ on $\tyodd{\Pol}_n$ by letting
	\begin{equation}
		\sdo_n(x_i) = \begin{cases}
			x_{n-1}, &\text{for }i = n \\
			x_n, &\text{for }i = n-1 \\
			-x_i, &\text{otherwise}.
		\end{cases}
	\end{equation}
It is straightforward to check that $\sdo_n$ is well defined, that is, $\sdo_n(x_ix_j + x_jx_i)=0$ for $i\neq j$.
We also recall the operators $\sso_i$, for $1 \leq i \leq n-1$, from~\eqnref{eq:sso}.

\begin{lem}
	The operators $\sdo_n$ and $\sso_i$ ($1 \leq i \le n-1$) satisfy the type D Coxeter relations.
	\end{lem}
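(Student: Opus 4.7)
The plan is to verify the Coxeter relations of type $D_n$ that involve $\sdo_n$, since the relations among $\sso_1,\dots,\sso_{n-1}$ (the symmetric-group part) are already established in \cite{EKL14}. Concretely I must check
\begin{align*}
(\sdo_n)^2 &= \mathrm{id}, \\
\sdo_n \sso_i &= \sso_i \sdo_n \quad (1 \le i \le n-3), \\
\sdo_n \sso_{n-1} &= \sso_{n-1} \sdo_n, \\
\sdo_n \sso_{n-2} \sdo_n &= \sso_{n-2} \sdo_n \sso_{n-2}.
\end{align*}
Each is an equation of $\Z$-linear endomorphisms of the skew-polynomial ring $\tyodd{\Pol}_n$. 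Because each basic operator $\sso_i$ and $\sdo_n$ is already known to preserve the defining anticommutation relations \eqref{eq:anti}, any composition is again a well-defined endomorphism of $\tyodd{\Pol}_n$, and so each identity is determined by its action on the generators $x_1,\dots,x_n$. This reduces every relation to a routine case check on single generators.

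The first three identities require only sign bookkeeping. For $(\sdo_n)^2$: on $x_k$ with $k\le n-2$ the two applications of $\sdo_n$ each contribute a factor $-1$, yielding $x_k$; on $x_{n-1}$ and $x_n$ the two swaps compose to the identity. For the commutations with $\sso_i$ when $i\le n-3$, the supports of $\sso_i$ and $\sdo_n$ are disjoint, and the sign conventions on the untouched variables are consistent because each of $\sso_i$, $\sdo_n$ sends an unaffected $x_k$ to $-x_k$. For $i=n-1$, the supports overlap in $\{x_{n-1},x_n\}$, but a direct computation on these two basis vectors (for example, $\sdo_n\sso_{n-1}(x_{n-1})=\sdo_n(-x_n)=-x_{n-1}$, matching $\sso_{n-1}\sdo_n(x_{n-1})=\sso_{n-1}(x_n)=-x_{n-1}$) settles the relation; on $x_k$ with $k\le n-2$, both compositions act as $(-1)^2=1$.

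The main step, and the one where sign errors are most likely, is the braid relation between $\sdo_n$ and $\sso_{n-2}$, since the three generators $x_{n-2},x_{n-1},x_n$ get mixed by their combined action. I plan to evaluate both sides on each of $x_{n-2}, x_{n-1}, x_n$ directly; for instance, one expects $\sdo_n\sso_{n-2}\sdo_n(x_{n-2})=x_n=\sso_{n-2}\sdo_n\sso_{n-2}(x_{n-2})$, together with analogous equalities on $x_{n-1}$ and $x_n$. For any other generator $x_k$ with $k<n-2$, each of the six operator applications contributes a factor $-1$, so both sides equal $-x_k$. Careful tracking of signs on the three-dimensional span of $x_{n-2},x_{n-1},x_n$ is the real obstacle; everything else is essentially immediate from the definitions.
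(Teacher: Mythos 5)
Your proposal is correct and follows essentially the same route as the paper: reduce to the four relations involving $\sdo_n$, note that equality of algebra endomorphisms can be checked on the generators $x_1,\dots,x_n$, and verify each case directly; the values you anticipate for the braid relation ($x_n$, $-x_{n-1}$, $x_{n-2}$ on $x_{n-2},x_{n-1},x_n$, and $-x_k$ otherwise) agree with the paper's computation.
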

	
\begin{proof}
We already know that $\sso_i$ ($1 \leq i \le n-1$) satisfy the type $A_{n-1}$ Coxeter relations.
It remains to check that 
	\begin{align*}
		(\sdo_n)^2 = 1, \qquad
		\sso_i\sdo_n &= \sdo_n\sso_i \;\; (i\neq n-2), \\
		\sdo_n\sso_{n-2}\sdo_n &= \sso_{n-2}\sdo_n\sso_{n-2}.
	\end{align*}

	The first relation is immediate. For the second relation, if $i < n-2$, we compute
	\begin{equation*}
		\sso_i\sdo_n(x_j) = \sdo_n\sso_i(x_j) = \begin{cases}
			-x_{n-1}, &\text{for }j = n \\
			-x_n, &\text{for }j = n-1 \\
			x_i, &\text{for }j = i+1 \\
			x_{i+1}, &\text{for }j = i \\
			x_j, &\text{otherwise};
		\end{cases}
	\end{equation*}
	and if $i = n-1$, then we compute
	\begin{equation*}
		\sdo_n\sso_{n-1}(x_j) = \sso_{n-1}\sdo_n(x_j) = \begin{cases}
			-x_n, &\text{for }j = n \\
			-x_{n-1}, &\text{for }j = n-1 \\
			x_j, &\text{otherwise}.
		\end{cases}
	\end{equation*}

	Finally, for the third relation, we compute
	\begin{equation*}
		\sdo_n\sso_{n-2}\sdo_n(x_j) = \sso_{n-2}\sdo_n\sso_{n-2}(x_j) = \begin{cases}
			x_{n-2}, &\text{for }j = n \\
			-x_{n-1}, &\text{for }j = n-1 \\
			x_n, &\text{for }j = n-2 \\
			-x_j, &\text{otherwise}.
		\end{cases}
	\end{equation*}
 The lemma is proved. 
\end{proof}


\begin{definition} [Type D odd Demazure operators] 
	We define $\pso_i$, for $1 \leq i \le n-1$, as before by \eqref{eq:pbo} and~\eqref{eq:leibnizb}.
	We also define $\pdo_n$ to be the $\Z$-linear operator of $\tyodd{\Pol}_n$ which satisfies  
	\begin{align}  \label{Demazure:D}
		\pdo_n(1) = 0 \qquad
		\pdo_n(x_j) &= \begin{cases}
			-1, &\text{for }j = n \\
			1, &\text{for }j = n-1 \\
			0, &\text{otherwise},
		\end{cases}
	\end{align}
and the  Leibniz rule
	\begin{equation*}
		\pdo_n(fg) = \pdo_n(f)g + \sdo_n(f)\pdo_n(g), \quad \forall f, g \in \tyodd{\Pol}_n.
	\end{equation*}
\end{definition}

\begin{thm}
	\label{thm:typedrelations}
	The operators $\pso_1,\dots,\pso_{n-1},\pdo_n$ in \eqref{eq:pbo} and \eqref{Demazure:D}, along with the left multiplication operators $x_1,\dots,x_n$, define a representation of $\dodd{\NH}_n$ on $\tyodd{\Pol}_n$.
\end{thm}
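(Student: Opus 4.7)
The plan is to verify the defining relations of $\dodd{\NH}_n$ from Definition~\ref{def:oddtypednh} one at a time, following the template of the proof of \thmref{thm:rel}. The type A relations \eqref{eq:arel.1}--\eqref{eq:arel.5} already hold by \cite[Proposition~2.1]{EKL14}, so the work is entirely in the six type D relations \eqref{eq:drel.1}--\eqref{eq:drel.7}.

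First I would knock out the mixed relations involving $x_i$ and $\pdo_n$, namely \eqref{eq:drel.6} and \eqref{eq:drel.7}. For any $f \in \tyodd{\Pol}_n$, the Leibniz rule gives
\begin{equation*}
\pdo_n(x_{n-1} f) = \pdo_n(x_{n-1}) f + \sdo_n(x_{n-1}) \pdo_n(f) = f + x_n \pdo_n(f),
\end{equation*}
which yields $\pdo_n x_{n-1} - x_n \pdo_n = 1$, and a symmetric computation with $x_n$ gives $x_{n-1}\pdo_n - \pdo_n x_n = 1$, proving \eqref{eq:drel.6}. For $i \leq n-2$, the Leibniz rule with $\sdo_n(x_i) = -x_i$ gives $\pdo_n(x_i f) = -x_i \pdo_n(f)$, which is \eqref{eq:drel.7}. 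These commutation rules play the same role here as \eqref{eq:brel.4}--\eqref{eq:brel.5} did in the type B proof, so they should be established first to streamline what follows.

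With those in hand, I would prove the purely operator relations \eqref{eq:drel.1}, \eqref{eq:drel.3}, \eqref{eq:drel.4}, \eqref{eq:drel.2} by induction on the degree of a monomial to which both sides are applied, exactly as in \thmref{thm:rel}. The base case is always the trivial $1 \mapsto 0$. For the inductive step on a monomial $x_k f$, one uses whichever of \eqref{eq:drel.6}, \eqref{eq:drel.7}, or the already-proved type A relations is needed to push $x_k$ past the operators on the left-hand side, applies the induction hypothesis, then reassembles. Relations \eqref{eq:drel.1}, \eqref{eq:drel.3}, \eqref{eq:drel.4} are short: splitting on whether $k$ equals $n-1$, $n$, or neither, two applications of Leibniz reduce each case to the inductive hypothesis plus the fact that $\sdo_n$ and $\sso_i$ satisfy the corresponding Coxeter relations. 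Note that the plus signs in \eqref{eq:drel.4} (contrast with \eqref{eq:drel.4W}) are forced by the signs in $\sdo_n(x_{n-1}) = x_n$ and $\sdo_n(x_n) = x_{n-1}$; this is where the new 2-cocycle is felt.

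The main obstacle will be the type D braid relation \eqref{eq:drel.2}, $\pdo_n \pso_{n-2} \pdo_n = \pso_{n-2} \pdo_n \pso_{n-2}$. Each side is a length-three word, so expanding $\pdo_n \pso_{n-2} \pdo_n(x_k f)$ by Leibniz produces many terms: one must split into the four cases $k \in \{n-2, n-1, n, \text{other}\}$ and carefully track the signs coming from $\sdo_n$ and $\sso_{n-2}$ as each variable is pulled outside. In each case, the strategy is to rewrite both sides until one is left with $\pm x_j \cdot (\text{LHS} - \text{RHS})(f)$ plus lower-degree pieces that cancel in pairs, and then invoke induction. As a useful sanity check, one could instead reduce the problem: the sub-algebra generated by $\pso_{n-2}, \pso_{n-1}, \pdo_n$ acts on $\tyodd{\Pol}_n$ through a quotient in which \eqref{eq:drel.2} is the only nontrivial relation left to verify, so the bookkeeping, while tedious, is entirely parallel to the $\pbo_n\pso_{n-1}\pbo_n\pso_{n-1}$ verification in \thmref{thm:rel} and presents no new conceptual difficulty.
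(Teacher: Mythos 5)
Your proposal is correct and follows essentially the same route as the paper: establish \eqref{eq:drel.6}--\eqref{eq:drel.7} directly from the Leibniz rule, then verify \eqref{eq:drel.1}--\eqref{eq:drel.4} by induction on the degree of a monomial $x_k f$, splitting into the cases $k=n-2$, $n-1$, $n$, and other, with the type D braid relation \eqref{eq:drel.2} carrying the bulk of the bookkeeping. The computations you defer are exactly the ones the paper carries out, and they go through as you describe.
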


\begin{proof}
	The proof consists in showing that the relations given in~\defref{def:oddtypednh} hold.

	Relations~\eqnref{eq:drel.6}--\eqnref{eq:drel.7} are easy consequences of the Leibniz rule.
	The remaining relations are proved by induction on the degree of a monomial.

	For~\eqnref{eq:drel.1}, we assume $(\pdo_n)^2(f) = 0$ and must show that $(\pdo_n)^2(x_j f) = 0$ for $1 \leq j \leq n$.
	For $j \leq n-2$, this follows easily from \eqnref{eq:drel.7}.
	For $j = n-1$, we compute
	\begin{align*}
		(\pdo_n)^2(x_{n-1} f) &= \pdo_n(f + x_n\pdo_n(f)) \\
		&= \pdo_n(f) - \pdo_n(f) + x_{n-1}(\pdo_n)^2(f) 
		= 0.
	\end{align*}
	The computation for $j = n$ is similar, and so we have $(\pso_{n-1})^2 = 0$, whence \eqnref{eq:drel.1}.

	For~\eqnref{eq:drel.2}, the inductive step for a monomial of the form $x_j f$ where $j < n-2$ is again trivial using \eqnref{eq:drel.7}.
	For $j = n-2$, we compute
		\begin{align*}
		\pdo_n\pso_{n-2}\pdo_n(x_{n-2} f) &= -\pdo_n\pso_{n-2}(x_{n-2}\pdo_n(f)) \\
		&= -\pdo_n(\pdo_n(f) - x_{n-1}\pso_{n-2}\pdo_n(f)) \\
		&= \pdo_n(x_{n-1}\pso_{n-2}\pdo_n(f)) \\
		&= \pso_{n-2}\pdo_n(f) + x_n\pdo_n\pso_{n-2}\pdo_n(f),
	\end{align*}
	and
	\begin{align*}
		\pso_{n-2}\pdo_n\pso_{n-2}(x_{n-2} f) &= \pso_{n-2}\pdo_n(f - x_{n-1}\pso_{n-2}(f)) \\
		&= \pso_{n-2}\pdo_n(f) + \pso_{n-2}(-\pso_{n-2}(f) - x_n\pdo_n\pso_{n-2}(f)) \\
		&= \pso_{n-2}\pdo_n(f) + x_n\pso_{n-2}\pdo_n\pso_{n-2}(f),
	\end{align*}
	which agrees with \eqnref{eq:drel.2} in this case via induction.
	For $j = n-1$, we have
	\begin{align*}
		\pdo_n\pso_{n-2}\pdo_n(x_{n-1} f) &= \pdo_n\pso_{n-2}(f + x_n\pdo_n(f)) \\
		&= \pdo_n\pso_{n-2}(f) + \pdo_n(-x_n\pso_{n-2}\pdo_n(f)) \\
		&= \pdo_n\pso_{n-2}(f) + \pso_{n-2}\pdo_n(f) - x_{n-1}\pdo_n\pso_{n-2}\pdo_n(f),
	\end{align*}
	and
	\begin{align*}
		\pso_{n-2}\pdo_n\pso_{n-2}(x_{n-1} f) &= \pso_{n-2}\pdo_n(f - x_{n-2}\pso_{n-2}(f)) \\
		&= \pso_{n-2}\pdo_n(f) + \pso_{n-2}(x_{n-2}\pdo_n\pso_{n-2}(f)) \\
		&= \pso_{n-2}\pdo_n(f) + \pdo_n\pso_{n-2}(f) - x_{n-1}\pso_{n-2}\pdo_n\pso_{n-2}(f),
	\end{align*}
	which again agrees with \eqnref{eq:drel.2} in this case by induction.
	For $j = n$, we compute
	\begin{align*}
		\pdo_n\pso_{n-2}\pdo_n(x_n f) &= \pdo_n\pso_{n-2}(-f + x_{n-1}\pdo_n(f)) \\
		&= -\pdo_n\pso_{n-2}(f) + \pdo_n(\pdo_n(f) - x_{n-2}\pso_{n-2}\pdo_n(f)) \\
		&= -\pdo_n\pso_{n-2}(f) + x_{n-2}\pdo_n\pso_{n-2}\pdo_n(f),
	\end{align*}
	and
	\begin{align*}
		\pso_{n-2}\pdo_n\pso_{n-2}(x_n f) &= \pso_{n-2}\pdo_n(-x_n\pso_{n-2}(f)) \\
		&= \pso_{n-2}(\pso_{n-2}(f) - x_{n-1}\pdo_n\pso_{n-2}(f)) \\
		&= -\pdo_n\pso_{n-2}(f) + x_{n-2}\pso_{n-2}\pdo_n\pso_{n-2}(f),
	\end{align*}
	thus completing the proof of \eqnref{eq:drel.2}.

	For~\eqnref{eq:drel.3}, we must check the monomial $x_j f$ for $j = n-1,n,i,i+1$ (the other cases follow trivially by \eqnref{eq:drel.7}).
	For $j = i$, we have
	\begin{align*}
		\pso_i\pdo_n(x_i f) &= \pso_i(-x_i\pdo_n(f))  
		= -\pdo_n(f) + x_{i+1}\pso_i\pdo_n(f),
	\end{align*}
	and
	\begin{align*}
		\pdo_n\pso_i(x_i f) &= \pdo_n(f - x_{i+1}\pso_i(f))
		= \pdo_n(f) + x_{i+1}\pdo_n\pso_i(f),
	\end{align*}
	verifying \eqnref{eq:drel.3} in this case by induction.
	The case $j = i+1$ is similar.
	For $j = n-1$, we compute
	\begin{align*}
		\pso_i\pdo_n(x_{n-1} f) &= \pso_i(f + x_n\pdo_n(f)) 
		= \pso_i(f) - x_n\pso_i\pdo_n(f),
	\\
		\pdo_n\pso_i(x_{n-1} f) &= \pdo_n(-x_{n-1}\pso_i(f)) 
		= -\pso_i(f) - x_n\pdo_n\pso_i(f),
	\end{align*}
	giving the expected result.
	The case $j = n$ is similar, completing the proof of \eqnref{eq:drel.3}.

	For~\eqnref{eq:drel.4}, we must check the cases $x_{j} f$ with $j = n-1,n$.
	For $j=n-1$, we have
	\begin{align*}
		\pso_{n-1}\pdo_n(x_{n-1} f) &= \pso_{n-1}(f + x_n\pdo_n(f)) \\
		&= \pso_{n-1}(f) + \pdo_n(f) - x_{n-1}\pso_{n-1}\pdo_n(f),
	\end{align*}
	and
	\begin{align*}
		\pdo_n\pso_{n-1}(x_{n-1} f) &= \pdo_n(f - x_n\pso_{n-1}(f)) \\
		&= \pdo_n(f) + \pso_{n-1}(f) - x_{n-1}\pdo_n\pso_{n-1}(f),
	\end{align*}
	verifying the given relation by induction.
	The case $j = n$ is similar. This completes the proof of the theorem. 
\end{proof}

\section{The rings of spin symmetric polynomials}
\label{sec:symmetricpolys}

In this section, we formulate and study the rings of spin symmetric polynomials of type B and D, which are defined via the odd Demazure operators.

%

%
%
\subsection{Spin type B symmetric polynomials}
\begin{lem}
	\label{lem:bexact}
	We have $\im(\pbo_n)=\ker(\pbo_n)$, and $\im(\pso_i)=\ker(\pso_i)$ for $1\leq i\leq n-1.$
\end{lem}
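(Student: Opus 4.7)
The plan is to deduce both inclusions directly from relations already established for the polynomial representation in \thmref{thm:rel}, with essentially no extra work.

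For the inclusion $\im \subseteq \ker$, I would invoke the nil relations \eqref{eq:arel.1} and \eqref{eq:brel.1}, namely $(\pso_i)^2 = 0$ and $(\pbo_n)^2 = 0$. Since these hold as identities of operators on $\tyodd{\Pol}_n$ by \thmref{thm:rel}, we immediately get $\im(\pso_i) \subseteq \ker(\pso_i)$ and $\im(\pbo_n) \subseteq \ker(\pbo_n)$.

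For the reverse inclusion, the key observation is that the relations $x_i \pso_i + \pso_i x_{i+1} = 1$ from \eqref{eq:arel.4} and $\pbo_n x_n + x_n \pbo_n = 1$ from \eqref{eq:brel.4} furnish an explicit ``contracting homotopy''. Applying the first identity of operators to any $f \in \tyodd{\Pol}_n$ yields
\[
f = x_i\, \pso_i(f) + \pso_i(x_{i+1} f),
\]
so if $f \in \ker(\pso_i)$ then $f = \pso_i(x_{i+1} f) \in \im(\pso_i)$. Similarly, applying the second identity to $f$ gives
\[
f = \pbo_n(x_n f) + x_n\, \pbo_n(f),
\]
so if $f \in \ker(\pbo_n)$ then $f = \pbo_n(x_n f) \in \im(\pbo_n)$.

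There is no real obstacle here: the nontrivial work has already been absorbed into verifying the defining relations of $\bodd{\NH}_n$ on $\tyodd{\Pol}_n$ in \thmref{thm:rel}, and once those are in hand the lemma is a one-line consequence. The proof is entirely algebraic and does not require any combinatorial analysis of kernels or images in terms of monomial bases.
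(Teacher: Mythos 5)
Your proof is correct and follows essentially the same route as the paper: the inclusion $\im\subseteq\ker$ from the nil relations \eqref{eq:arel.1}, \eqref{eq:brel.1}, and the reverse inclusion by applying the identities \eqref{eq:arel.4} and \eqref{eq:brel.4} to an element of the kernel (the paper writes this out for $\pbo_n$ and cites \cite{EKL14} or the same argument for the $\pso_i$). Nothing is missing.
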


\begin{proof}
	It follows by~\eqnref{eq:brel.1} that $\im(\pbo_n) \subseteq \ker(\pbo_n)$.
	Now suppose that $\pbo_n(f)=0$. Then by~\eqnref{eq:brel.4}, we have
 $ f = (\pbo_n x_n + x_n\pbo_n)f = \pbo_n(x_n f),$ and    so $f\in\im(\pbo_n)$.
 The remaining equalities were shown in \cite{EKL14}, and can be proved similarly as above. 
\end{proof}

	The ring of spin type B symmetric polynomials is defined to be
	\begin{equation}  \label{SF:B}
		\bodd{\Lambda}_n = \bigcap_{i=1}^{n-1} \im(\pso_i) \cap \im(\pbo_n) = \bigcap_{i=1}^{n-1} \ker(\pso_i)  \cap \ker(\pbo_n).
	\end{equation}
The second equality above follows by Lemma~\ref{lem:bexact}. We remark that 
\[
\aodd{\Lambda}_n := \bigcap_{i=1}^{n-1} \im(\pso_i) = \bigcap_{i=1}^{n-1} \ker(\pso_i)
\]
was studied in depth in \cite{EKL14} in connection with $\aodd{\NH}_n$.

The following lemma will be useful later on for computing $\pbo_n$. 
\begin{lem}
	\label{lem:pbo}
	For $k \geq 0$, we have 
	\begin{equation*}
		\pbo_n(x_n^k) = \begin{cases}
			0, &\text{for }k\text{ even} \\
			x_n^{k-1}, &\text{for }k\text{ odd}.
		\end{cases}
	\end{equation*}
\end{lem}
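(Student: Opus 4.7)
The plan is to prove the claim by straightforward induction on $k$, using the Leibniz rule \eqref{eq:leibnizb:B} together with the facts that $\pbo_n(x_n) = 1$ and $\sbo_n(x_n) = -x_n$.

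The base cases $k=0$ and $k=1$ are immediate from the definition: $\pbo_n(1) = 0$ and $\pbo_n(x_n) = 1 = x_n^0$. For the inductive step, assuming the formula holds for some $k \ge 1$, I would apply the Leibniz rule to $x_n^{k+1} = x_n \cdot x_n^k$ to obtain
\begin{equation*}
\pbo_n(x_n^{k+1}) = \pbo_n(x_n)\,x_n^k + \sbo_n(x_n)\,\pbo_n(x_n^k) = x_n^k - x_n\,\pbo_n(x_n^k).
\end{equation*}
Splitting on the parity of $k$, if $k$ is even the inductive hypothesis gives $\pbo_n(x_n^k) = 0$, so $\pbo_n(x_n^{k+1}) = x_n^k$, which matches the formula since $k+1$ is odd; if $k$ is odd, then $\pbo_n(x_n^k) = x_n^{k-1}$, so $\pbo_n(x_n^{k+1}) = x_n^k - x_n \cdot x_n^{k-1} = 0$, matching the formula since $k+1$ is even.

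There is no real obstacle here: the single subtlety is tracking the sign coming from $\sbo_n(x_n) = -x_n$, which is precisely what causes the alternation between $0$ and $x_n^{k-1}$. The same argument could equivalently be phrased by noting that $\pbo_n$ behaves on powers of the single variable $x_n$ exactly like the classical divided difference $(1 - \sbo_n)/(2x_n)$ would on an ordinary polynomial ring, but the direct induction above is the cleanest presentation.
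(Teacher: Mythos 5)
Your proof is correct and is exactly the argument the paper intends: its proof of this lemma reads ``Follows by a simple induction via the Leibniz rule,'' and your induction with the sign $\sbo_n(x_n) = -x_n$ fills in precisely those details. No issues.
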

\begin{proof}
	Follows by a simple induction via the Leibniz rule.
\end{proof}

Below (in Lemma~\ref{lem:evenatoodda} and its proof) we find it convenient to use some standard results on the usual (i.e., non-spin) nilHecke algebras $\beven{\NH}_n$ and $\deven{\NH}_n$, or rather on its subalgebra of Weyl group invariant polynomials. These results can be found in Appendix~\ref{appendix:A}, where we describe the nilHecke algebras in general (including the classical type in more detail). We adopt the convention of dropping the superscript $-$ from notations for spin nilHecke algebras and their related constructions to denote their non-spin counterparts. 
We denote by $\Pol_n =\Z[\x_1, \ldots, \x_n]$ the usual polynomial algebra, where the Weyl group of classical type acts naturally. The subalgebra of Weyl group invariant polynomials are denoted by $\tya{\Lambda}_n$, $\tyb{\Lambda}_n$, $\tyd{\Lambda}_n$, respectively. We recall \eqref{eq:invB} here:
\begin{equation*}
\tya{\Lambda}_n = \Z[\x_1, \dots, \x_n]^{S_n},
\quad
\tyb{\Lambda}_n = \Z[\x_1^2, \dots, \x_n^2]^{S_n},
\quad
\tyd{\Lambda}_n = \tyb{\Lambda}_n [\x_1\dotsm \x_n].
\end{equation*}

\begin{lem}
	\label{lem:evenatoodda}
	For any polynomial $f$ in $n$ variables, we have $f(x_1^2,\dotsc,x_n^2) \in \aodd{\Lambda}_n$ if and only if $f(\x_1^2,\dotsc,\x_n^2) \in \aeven{\Lambda}_n$.
\end{lem}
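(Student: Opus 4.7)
The plan is to reduce both conditions in the equivalence to the single statement that $f$ is a symmetric polynomial in its $n$ variables.

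First I would observe that each $x_i^2$ is central in the skew-polynomial algebra $\tyodd{\Pol}_n$: for $i \neq j$, we have $x_i^2 x_j = -x_i x_j x_i = x_j x_i^2$. Hence $\Z[x_1^2,\dots,x_n^2]$ is a commutative polynomial subalgebra of $\tyodd{\Pol}_n$, and the evaluation map $\Z[u_1, \dots, u_n] \to \Z[x_1^2, \dots, x_n^2]$, $u_i \mapsto x_i^2$, is an isomorphism of commutative rings. I also note that $\tyodd{\Pol}_n$ is a domain, via its PBW basis and the standard filtration reducing to $\Z[x_1,\dots,x_n]$.

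The heart of the argument is the identity
\[
\pso_i\bigl(f(x_1^2,\dots,x_n^2)\bigr) = (x_i - x_{i+1})\,(\partial_i f)(x_1^2,\dots,x_n^2),
\]
where $\partial_i$ is the usual (even) divided difference operator on $\Z[u_1,\dots,u_n]$. I would prove this by induction on $\deg f$ using the odd Leibniz rule for $\pso_i$. The key observation is that the restriction of $\sso_i$ to $\Z[x_1^2,\dots,x_n^2]$ is just the transposition $x_i^2 \leftrightarrow x_{i+1}^2$ fixing the remaining $x_k^2$, because $\sso_i$ is an algebra map and $\sso_i(x_j)^2$ absorbs the negative sign appearing in $\sso_i(x_j) = -x_j$ for $j \notin \{i,i+1\}$. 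This matches precisely the role of $s_i$ in the Leibniz rule for $\partial_i$; the base case and the three subcases ($j = i$, $j = i+1$, $j \notin \{i,i+1\}$) of the inductive step each reduce to short direct computations, the last using that $x_i - x_{i+1}$ commutes with $x_j^2$.

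Granting the identity, the lemma follows at once. Since $\tyodd{\Pol}_n$ is a domain and $x_i - x_{i+1} \neq 0$, the vanishing $\pso_i(f(x_1^2,\dots,x_n^2)) = 0$ is equivalent to $(\partial_i f)(x_1^2,\dots,x_n^2) = 0$, and by the isomorphism above, to $\partial_i f = 0$ in $\Z[u_1, \dots, u_n]$; requiring this for all $i$ amounts to $f$ being $S_n$-symmetric. On the other side, $f(\x_1^2,\dots,\x_n^2) \in \aeven{\Lambda}_n = \Z[\x_1,\dots,\x_n]^{S_n}$ means it is $s_i$-invariant for every $i$, which, since $s_i$ swaps $\x_i^2 \leftrightarrow \x_{i+1}^2$ and fixes the other $\x_k^2$, is again the condition that $f$ be symmetric in $n$ arguments. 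The main obstacle is thus the key identity, where care is needed in juggling the odd Leibniz rule and the signs hidden inside $\sso_i$ so that the restriction to the central squared variables cleanly mimics the even divided difference.
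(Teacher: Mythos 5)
Your proof is correct and takes essentially the same route as the paper: both rest on the observation that $\sso_i$ and $\pso_i$ act on the squared variables $x_j^2$ by the same formulas as $s_i$ and $\partial_i$ act on $\x_j^2$, propagated through the Leibniz rule. You simply make the paper's ``formally identical actions'' step explicit via the identity $\pso_i\bigl(f(x_1^2,\dots,x_n^2)\bigr) = (x_i - x_{i+1})\,(\partial_i f)(x_1^2,\dots,x_n^2)$ and a reduction of both membership conditions to $S_n$-symmetry of $f$.
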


\begin{proof}
It is well known that the subalgebra $\tya{\Lambda}_n$ of the type A Weyl group invariant polynomials  coincides with the intersection of the kernels of the corresponding Demazure operators; cf.~ \eqref{eq:inv}. 

A direct computation in the setting of skew-polynomial representation $\Pol_n^-$ of the type A spin nilHecke algebras gives us 
	\begin{align*}
		s_i(x_j^2) &= \begin{cases}
			x_{i+1}^2, &\text{for }j = i \\
			x_i^2, &\text{for }j = i+1 \\
			x_j^2, &\text{otherwise},
		\end{cases} 
		\qquad\qquad
		\partial_i^-(x_j^2) = \begin{cases}
			x_i - x_{i+1}, &\text{for }j = i \\
			x_{i+1} - x_i, &\text{for }j = i+1 \\
			0, &\text{otherwise},
		\end{cases}
	\end{align*}
A completely analogous computation in the setting of polynomial representation $\Pol_n$ of the usual type A nilHecke algebras gives us
	\begin{align*}
		s_i(\x_j^2) &= \begin{cases}
			\x_{i+1}^2, &\text{for }j = i \\
			\x_i^2, &\text{for }j = i+1 \\
			\x_j^2, &\text{otherwise},
		\end{cases}
		 \qquad \qquad 
		\partial_i(\x_j^2) = \begin{cases}
			\x_i - \x_{i+1}, &\text{for }j = i \\
			\x_{i+1} - \x_i, &\text{for }j = i+1 \\
			0, &\text{otherwise}.
		\end{cases}
	\end{align*}

	As can be seen from the above, the actions of the Demazure operators are formally identical on polynomials of even degree in each variable in the spin and non-spin settings.
	Therefore,  $f(x_1^2,\dotsc,x_n^2) \in \bigcap_{i=1}^{n-1} \ker(\partial_i^-)$ if and only if  $f(\x_1^2,\dotsc,\x_n^2) \in \bigcap_{i=1}^{n-1} \ker(\partial_i)$. The lemma follows. 
\end{proof}

	Define the spin type B elementary symmetric functions, for $1 \le k \le n$:
	\begin{equation}
	\label{def:SFb}
		\elembo_k(x_1,\dotsc,x_n) = \sum_{1\leq i_1 < \dotsb < i_k \leq n} x_{i_1}^2\dotsb x_{i_k}^2.
	\end{equation}

\begin{lem}
	\label{lem:boddelem}
The elements $\elembo_k(x_1,\dotsc,x_n)$ for all $1\le k \le n$ commute with each other. Moreover, we have $\elembo_k(x_1,\dotsc,x_n) \in \bodd{\Lambda}_n$. 
\end{lem}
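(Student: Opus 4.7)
The plan is to establish the two claims of the lemma separately, relying only on the skew-commutation \eqref{eq:anti}, the Leibniz rule, and the input from Lemmas~\ref{lem:evenatoodda} and~\ref{lem:pbo}.

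First I would prove commutativity. A short direct computation from $x_ix_j=-x_jx_i$ ($i\neq j$) shows that each $x_i^2$ is central in $\tyodd{\Pol}_n$: moving $x_j$ past $x_i^2$ produces two cancelling sign flips when $j\neq i$, and is trivial when $j=i$. Consequently any two polynomials in $x_1^2,\dots,x_n^2$ commute, so in particular the $\elembo_k(x_1,\dots,x_n)$ commute among themselves.

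For $\pso_i(\elembo_k)=0$ with $1\le i\le n-1$, I would simply apply Lemma~\ref{lem:evenatoodda}: writing $f$ for the ordinary $k$-th elementary symmetric polynomial in $n$ variables, we have $\elembo_k(x_1,\dots,x_n)=f(x_1^2,\dots,x_n^2)$, while $f(\x_1^2,\dots,\x_n^2)$ is the $k$-th elementary symmetric polynomial in the $\x_i^2$, which manifestly lies in $\beven{\Lambda}_n\subseteq \aeven{\Lambda}_n$. Lemma~\ref{lem:evenatoodda} then transfers $\pso_i$-invariance back to the spin side.

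For $\pbo_n(\elembo_k)=0$, I would use the decomposition
\[
\elembo_k(x_1,\dots,x_n)=\elembo_k(x_1,\dots,x_{n-1})+x_n^2\cdot \elembo_{k-1}(x_1,\dots,x_{n-1}),
\]
which holds in $\tyodd{\Pol}_n$ thanks to the centrality of $x_n^2$ just established. A short induction on degree using the Leibniz rule and \eqref{eq:brel.5} (with base case $\pbo_n(1)=0$) shows that $\pbo_n$ annihilates every polynomial in $x_1,\dots,x_{n-1}$; Lemma~\ref{lem:pbo} gives $\pbo_n(x_n^2)=0$; and a final Leibniz expansion $\pbo_n(x_n^2\cdot g)=\pbo_n(x_n^2)g+\sbo_n(x_n^2)\pbo_n(g)$ disposes of the second summand. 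The only place the skew setting could cause trouble is the decomposition above, where the ordering of factors must be immaterial, but this is precisely what the commutativity step supplies, so no hidden sign subtleties arise.
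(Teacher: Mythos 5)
Your proposal is correct and follows essentially the same route as the paper: commutativity from the centrality of the $x_i^2$, the relations $\pso_i(\elembo_k)=0$ via Lemma~\ref{lem:evenatoodda}, and $\pbo_n(\elembo_k)=0$ via Lemma~\ref{lem:pbo} together with the Leibniz rule. The only difference is that you spell out the $\pbo_n$-step (separating off the $x_n^2$-part and using that $\pbo_n$ kills polynomials in $x_1,\dots,x_{n-1}$), which the paper leaves as a terse citation of Lemma~\ref{lem:pbo}.
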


\begin{proof}
	The commutativity is clear as these elements are of the form $f(x_1^2,\dotsc,x_n^2)$. 
It follows  by~\lemref{lem:evenatoodda} that the elements $\elembo_k(x_1,\dotsc,x_n)$ are in $\aodd{\Lambda}_n$.
	Furthermore, they are also in $\ker(\pbo_n)$ by~\lemref{lem:pbo}. Therefore they are in $\aodd{\Lambda}_n \cap \ker(\pbo_n) =\bodd{\Lambda}_n$ (the equality follows by definition).
\end{proof}

We can now provide a complete description of $\bodd{\Lambda}_n$.

\begin{thm}
	\label{thm:bodd}
We have $\bodd{\Lambda}_n =\Z[x_1^2, \ldots, x_n^2]^{S_n}$, which is a polynomial algebra generated by $\elembo_1, \elembo_2, \ldots, \elembo_n$.
\end{thm}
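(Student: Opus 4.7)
The inclusion $\bodd{\Lambda}_n \supseteq \Z[x_1^2,\dots,x_n^2]^{S_n}$, together with the identification of the right-hand side as a polynomial algebra in $\elembo_1,\dots,\elembo_n$, is immediate: by \lemref{lem:boddelem} each $\elembo_k$ lies in $\bodd{\Lambda}_n$, and since $x_ix_j=-x_jx_i$ for $i\ne j$ forces $x_i^2 x_j^2 = x_j^2 x_i^2$, the subalgebra $R:=\Z[x_1^2,\dots,x_n^2]$ of $\tyodd{\Pol}_n$ is a genuine commutative polynomial ring, to which the classical fundamental theorem of symmetric polynomials applies and identifies $R^{S_n}$ with the polynomial algebra $\Z[\elembo_1,\dots,\elembo_n]$.

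The substance is the reverse inclusion $\bodd{\Lambda}_n \subseteq R^{S_n}$. I will reduce this to showing that every $f \in \bodd{\Lambda}_n$ already lies in $R$; granted this, writing $f=g(x_1^2,\dots,x_n^2)$ for some $g\in\Z[y_1,\dots,y_n]$ and applying \lemref{lem:evenatoodda} to the hypothesis $f\in\aodd{\Lambda}_n$ gives $g(\x_1^2,\dots,\x_n^2)\in\aeven{\Lambda}_n=\Z[\x_1,\dots,\x_n]^{S_n}$, so $g$ itself is $S_n$-symmetric (by the algebraic independence of the $\x_i^2$), whence $f\in R^{S_n}$.

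To show $f\in R$, i.e.\ that each variable appears only to an even power in every monomial of $f$, I induct on $n$. For the $x_n$-direction, an explicit Leibniz computation using that $\sbo_n$ acts by $(-1)^{\deg}$, combined with \lemref{lem:pbo}, yields
\[
\pbo_n\bigl(x_1^{a_1}\cdots x_{n-1}^{a_{n-1}}\,x_n^{a_n}\bigr) = (-1)^{a_1+\cdots+a_{n-1}}\, x_1^{a_1}\cdots x_{n-1}^{a_{n-1}}\,\pbo_n(x_n^{a_n}),
\]
which vanishes precisely when $a_n$ is even, so $\ker(\pbo_n)$ forces $a_n$ to be even in every monomial of $f$. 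Write then $f=\sum_m g_m(x_1,\dots,x_{n-1})\,x_n^{2m}$, noting $x_n^2$ is central in $\tyodd{\Pol}_n$. Since $\pso_i(x_n^{2m})=0$ for $1\le i\le n-2$, the equations $\pso_i(f)=0$ decouple monomial-by-monomial in $x_n^{2m}$ to give $\pso_i(g_m)=0$, so each $g_m\in\aodd{\Lambda}_{n-1}$. Unpacking the remaining equation $\pso_{n-1}(f)=0$ via the twisted Leibniz rule should produce an additional ``$\pbo_{n-1}$-like'' constraint placing each $g_m\in\bodd{\Lambda}_{n-1}$, at which point the inductive hypothesis gives $g_m\in\Z[x_1^2,\dots,x_{n-1}^2]$ and therefore $f\in R$.

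The main obstacle is this last inductive step: extracting from the single equation $\pso_{n-1}(f)=0$ the desired ``$\pbo_{n-1}$-invariance'' of each $g_m$. The delicacy is that $\sso_{n-1}$ acts on a monomial of $g_m$ by substituting $x_{n-1}\mapsto -x_n$ together with $x_i\mapsto -x_i$ for $i<n-1$, which generates mixed $x_{n-1}/x_n$ cross-terms in $\pso_{n-1}(g_m\,x_n^{2m})$; organizing the cancellations across the different powers of $x_n^2$ and across the distinct monomials in each $g_m$ is where the careful bookkeeping lies.
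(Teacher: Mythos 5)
Your reductions up through the decomposition $f=\sum_m g_m(x_1,\dots,x_{n-1})\,x_n^{2m}$ are sound: the containment $\Z[x_1^2,\dots,x_n^2]^{S_n}\subseteq\bodd{\Lambda}_n$ via \lemref{lem:boddelem}, the passage from ``$f\in R$'' to full $S_n$-symmetry via \lemref{lem:evenatoodda}, the evenness of the $x_n$-exponents from $\ker(\pbo_n)$ (with the easy no-cancellation remark), and the conclusion $\pso_i(g_m)=0$ for $i\le n-2$ all check out. But the argument then stops exactly at its decisive point: the claim that $\pso_{n-1}(f)=0$ ``should produce'' a $\pbo_{n-1}$-type constraint forcing each $g_m$ into $\bodd{\Lambda}_{n-1}$ is asserted, not proved, and you acknowledge this yourself. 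This is not routine bookkeeping: all the constraints you have actually exploited ($\pbo_n$, and $\pso_i$ with $i\le n-2$) respect the grading by powers of $x_n$, whereas $\pso_{n-1}$ does not --- it is the only operator coupling $x_{n-1}$ with $x_n$, so it is the sole source of the needed evenness in $x_{n-1}$, and no mechanism is given for extracting that from the cross-terms. (The desired conclusion is true, since it follows from the theorem itself, but that is precisely what is to be proved; as stated, the induction has no proof of its inductive step.) Note also that $\aodd{\Lambda}_{n-1}$ genuinely contains odd-degree elements such as $x_1-x_2+\cdots$, so membership of $g_m$ in $\aodd{\Lambda}'_{n-1}$ gives no evenness for free; everything hinges on the unproved $\pso_{n-1}$ analysis.

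For comparison, the paper avoids any monomial-by-monomial elimination. It adapts the argument of \cite{EKL14}: having the inclusion $\Z[x_1^2,\dots,x_n^2]^{S_n}\subseteq\bodd{\Lambda}_n$, it shows both subgroups admit free abelian complements in $\tyodd{\Pol}_n$ (for $\bodd{\Lambda}_n$ by a divisibility argument, for the elementary-symmetric side by a leading-coefficient argument), observes that reduction mod $2$ erases all signs so the spin and non-spin kernels coincide over $\Z_2$, hence $\rk_q(\bodd{\Lambda}_n)=\rk_q(\beven{\Lambda}_n)$, and concludes equality from the containment plus equality of graded ranks. If you want to salvage your route, you must either carry out the $\pso_{n-1}$ computation in full (sorting $\pso_{n-1}(g_m x_n^{2m})$ by $x_n$-degree and proving the resulting recursions kill odd $x_{n-1}$-exponents), or replace that step by a graded-rank comparison of the type the paper uses; as written, the proposal does not establish the reverse inclusion.
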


\begin{proof}
We adapt the proof of~\cite[Proposition~2.2]{EKL14} here. We give some details, as we will repeat the argument for type D later. 
	
 Set $\beven{\Lambda}_n^{-,\text{elem}} :=\Z[x_1^2, \ldots, x_n^2]^{S_n}$, which is well known to be a polynomial algebra generated by $\elembo_1, \elembo_2, \ldots, \elembo_n$. So we have $\beven{\Lambda}_n^{-,\text{elem}} \subset \bodd{\Lambda}_n$ by~\lemref{lem:boddelem}, and we shall prove the equality holds.  
 
{\bf Claim.} Both $\bodd{\Lambda}_n$ and  $\beven{\Lambda}_n^{-,\text{elem}}$ have free abelian group complements in $\Pol_n^-$. 

Let us take the Claim for granted for now. 
Recalling the non-spin type B constructions in Appendix~\ref{appendix:A}, we observe the spin and the usual constructions coincide over the field $\Z_2$. In particular, $\rk_q (\bodd{\Lambda}_n) =\rk_q (\beven{\Lambda}_n)$. 
Now since $\beven{\Lambda}_n^{-,\text{elem}} \subset \bodd{\Lambda}_n$ and both have free complements by the Claim, the graded dimensions over $\Z_2$ of their reductions mod 2 coincide if and only if they are equal.

It remains to prove the Claim. For $\bodd{\Lambda}_n$, this is because if there were no free complement, some free direct summand (as a $\Z$-submodule) would be wholly divisible by an integer $d >1$. But then we could divide generators of this summand by $d$. The result would still be in the kernel of all the odd Demazure operators, a contradiction. As for $\beven{\Lambda}_n^{-,\text{elem}}$, one checks that with respect to a lexicographic order on monomials, the highest order term of the basis of elementary symmetric polynomials always has coefficient 1. 
The Claim (and hence the theorem) is proved.
\end{proof}

	We define the $(q,\pi)$-integers, the $q$-integers,  the $q$-double factorial, and the  $(q,\pi)$-double factorial as follows: 
	\begin{align}        \label{eq:doublefactorial}
	\begin{split}
	   [n] = \frac{q^n-q^{-n}}{q-q^{-1}}, \qquad   &    [2n]!! =[2n] [2n-2] \dotsm[4] [2],
			\\
	  [n]_\pi = \frac{\pi^n q^n-q^{-n}}{\pi q-q^{-1}}, \qquad &  [2n]_\pi!! =[2n]_\pi[2n-2]_\pi\dotsm[4]_\pi[2]_\pi.
		\end{split}
	\end{align}
We have the following corollary to \thmref{thm:bodd}.
\begin{cor}
	\label{cor:bsym2}
	The algebra $\bodd{\Lambda}_n$ has graded rank
	\begin{equation}
		\rk_{q}(\bodd{\Lambda}_n) = q^{-n^2} \frac{1}{(1-q^2)^n} \frac{1}{[2n]!!}.
	\end{equation}
In particular, we have $\rk_q(\bodd{\Lambda}_n)=\rk_q(\beven{\Lambda}_n)$.
\end{cor}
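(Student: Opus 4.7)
The plan is to read the graded rank of $\bodd{\Lambda}_n$ directly off of \thmref{thm:bodd}. That theorem identifies $\bodd{\Lambda}_n$ with a polynomial algebra in the generators $\elembo_1,\dots,\elembo_n$. By the grading convention \eqref{eq:grading} we have $|x_i|=2$, and by the definition \eqref{def:SFb} every monomial of $\elembo_k$ is a product of $k$ factors of the form $x_{i_j}^2$, each of degree $4$. Hence $|\elembo_k|=4k$, and the standard graded rank formula for a polynomial algebra on homogeneous generators immediately gives
\[
\rk_q(\bodd{\Lambda}_n) \;=\; \prod_{k=1}^n \frac{1}{1-q^{4k}}.
\]

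The remaining step is purely an algebraic identity: rewrite this product in the form stated in the corollary. Using the factorizations
\[
q^{2k}-q^{-2k} = -q^{-2k}(1-q^{4k}), \qquad q-q^{-1} = -q^{-1}(1-q^2),
\]
one computes
\[
[2k] \;=\; \frac{q^{2k}-q^{-2k}}{q-q^{-1}} \;=\; q^{-(2k-1)}\,\frac{1-q^{4k}}{1-q^2}.
\]
Taking the product over $k=1,\dots,n$ and using $\sum_{k=1}^n(2k-1) = n^2$ yields
\[
[2n]!! \;=\; q^{-n^2}\,(1-q^2)^{-n}\,\prod_{k=1}^n (1-q^{4k}),
\]
which rearranges to the claimed closed form for $\rk_q(\bodd{\Lambda}_n)$.

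For the final assertion $\rk_q(\bodd{\Lambda}_n)=\rk_q(\beven{\Lambda}_n)$, I would invoke \eqref{eq:invB}: the non-spin counterpart $\beven{\Lambda}_n = \Z[\x_1^2,\dots,\x_n^2]^{S_n}$ is again a polynomial algebra on $n$ algebraically independent generators, namely the elementary symmetric polynomials in $\x_1^2,\dots,\x_n^2$, of degrees $4,8,\dots,4n$ under the same grading convention $|\x_i|=2$. Its graded rank is therefore the same product $\prod_{k=1}^n(1-q^{4k})^{-1}$, so the two ranks agree.

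There is no real obstacle: the corollary is essentially a bookkeeping consequence of \thmref{thm:bodd} together with a short $q$-integer manipulation. The only thing to be careful about is correctly tracking that the relevant degrees are $4k$ (not $2k$), since the $\elembo_k$ are polynomials in the $x_i^2$ rather than in the $x_i$.
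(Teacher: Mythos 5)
Your proposal is correct and matches the paper's intended argument: the corollary is read off directly from \thmref{thm:bodd} (a polynomial algebra on the commuting generators $\elembo_k$ of degree $4k$, giving $\prod_{k=1}^n(1-q^{4k})^{-1}$), followed by the same $q$-integer manipulation, and the comparison with $\beven{\Lambda}_n$ via its generators of the same degrees (cf.\ Corollary~\ref{cor:blambdarkeven}). Your degree bookkeeping ($4k$, not $2k$) and the identity $\sum_{k=1}^n(2k-1)=n^2$ are both handled correctly.
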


\subsection{Spin type D symmetric polynomials}

\begin{lem}
	\label{lem:dexact}
	We have $\im(\pdo_n)=\ker(\pdo_n)$, and $\im(\pso_i)=\ker(\pso_i)$ for $1\leq i\leq n-1.$
\end{lem}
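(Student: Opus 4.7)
The plan is to mirror the proof of \lemref{lem:bexact}, substituting the type D relations for the type B ones. The statement for $\pso_i$ with $1 \le i \le n-1$ is identical to the type A assertion proved in \cite{EKL14}, so nothing new is needed there; all the work is in handling $\pdo_n$.

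For the inclusion $\im(\pdo_n) \subseteq \ker(\pdo_n)$, the argument is immediate: relation \eqref{eq:drel.1} says $(\pdo_n)^2 = 0$, so anything in the image is automatically killed by $\pdo_n$.

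For the reverse inclusion, the key tool is the commutation relation \eqref{eq:drel.6}, specifically the identity $\pdo_n x_{n-1} - x_n \pdo_n = 1$ of operators on $\tyodd{\Pol}_n$. Applied to any $f \in \tyodd{\Pol}_n$, this gives
\[
\pdo_n(x_{n-1} f) - x_n \pdo_n(f) = f.
\]
If we assume $\pdo_n(f)=0$, then $f = \pdo_n(x_{n-1} f) \in \im(\pdo_n)$, completing the proof. (One could equally well use the other half of \eqref{eq:drel.6}, namely $x_{n-1}\pdo_n - \pdo_n x_n = 1$, to write $f = -\pdo_n(x_n f)$.)

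There is no real obstacle here: the type D relation \eqref{eq:drel.6} plays exactly the role of the type B relation \eqref{eq:brel.4} in \lemref{lem:bexact}, with the extra minus sign in the type D Leibniz rule absorbed without difficulty. The only very small care to exercise is to pick the correct one of the two equivalent forms of \eqref{eq:drel.6} so that $\pdo_n(f)=0$ makes the unwanted term vanish directly.
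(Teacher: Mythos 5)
Your proof is correct and follows essentially the same route as the paper: both rest on the nil relation \eqref{eq:drel.1} for one inclusion and on \eqref{eq:drel.6} for the other, the paper using the half $x_{n-1}\pdo_n - \pdo_n x_n = 1$ to get $f = \pdo_n(-x_n f)$, which is exactly the alternative you mention in your parenthetical remark.
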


\begin{proof}
We only need to verify the first identity. By~\eqnref{eq:drel.1}, $\im(\pdo_n) \subseteq \ker(\pdo_n)$.
	If $f\in\ker(\pdo_n)$, then by \eqnref{eq:drel.6} we have that
	$f = -(\pdo_n x_n - x_{n-1}\pdo_n)f = \pdo_n(-x_n f),$ and so $f\in\im(\pdo_n)$.
\end{proof}

	The ring of spin type D symmetric polynomials is defined to be
	\begin{equation} \label{SF:D}
		\dodd{\Lambda}_n = \bigcap_{i=1}^{n-1} \im(\pso_i) \cap \im(\pdo_n) = \bigcap_{i=1}^{n-1} \ker(\pso_i)  \cap \ker(\pdo_n).
	\end{equation}
The second equality above follows by Lemma~\ref{lem:dexact}.

	Define the spin type D elementary symmetric functions:
	\begin{equation}
	 \label{def:SFd}
		\elemdo_k (x_1,\dotsc,x_n) = \begin{cases}
			\sum_{1 \leq i_1 < \dotsb < i_k \leq n} x_{i_1}^2\dotsm x_{i_k}^2, &\text{for }1 \leq k \leq n-1 \\
			x_1 \dotsm x_n, &\text{for }k = n.
		\end{cases}
	   \end{equation}

The following lemma will be useful in considering the spin type D symmetric functions.

\begin{lem}
	\label{lem:pdoevenpowers}
We have
	\begin{equation*}
		\pdo_n(x_i^2) = \begin{cases}
			-x_n - x_{n-1}, &\text{for }i = n \\
			x_n + x_{n-1}, &\text{for }i = n-1 \\
			0, &\text{otherwise}.
		\end{cases}
	\end{equation*}
\end{lem}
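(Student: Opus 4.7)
The plan is to prove this by a direct application of the Leibniz rule for $\pdo_n$, treating each of the three cases $i=n$, $i=n-1$, and $i \le n-2$ separately. Since $x_i^2 = x_i \cdot x_i$, the Leibniz rule gives
\[
\pdo_n(x_i^2) = \pdo_n(x_i)\, x_i + \sdo_n(x_i)\, \pdo_n(x_i),
\]
so everything reduces to plugging in the defining values of $\pdo_n$ on the generators and the action of the Weyl group element $\sdo_n$.

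First I would handle $i \le n-2$: here $\pdo_n(x_i) = 0$ by \eqref{Demazure:D}, making both summands vanish. Next, for $i = n-1$, I substitute $\pdo_n(x_{n-1}) = 1$ and $\sdo_n(x_{n-1}) = x_n$, which yields $1 \cdot x_{n-1} + x_n \cdot 1 = x_{n-1} + x_n$. Finally, for $i = n$, I substitute $\pdo_n(x_n) = -1$ and $\sdo_n(x_n) = x_{n-1}$ to obtain $(-1) \cdot x_n + x_{n-1} \cdot (-1) = -x_n - x_{n-1}$. Each of these matches the desired formula.

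There is no real obstacle here; the calculation is a mechanical consequence of the definitions of $\pdo_n$ and $\sdo_n$ already established. The only minor point worth flagging is the sign, which is controlled by the negative values of $\pdo_n(x_n)$ and $\sdo_n(x_i)$ for $i \le n-2$, and it is precisely this asymmetry relative to the type B situation (where $\sbo_n$ acts by $-1$ on every generator) that distinguishes the type D elementary symmetric polynomials from the type B ones in the subsequent analysis.
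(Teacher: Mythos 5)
Your computation is correct and is exactly what the paper intends by its one-line proof ("follows by a simple calculation from the definitions"): apply the Leibniz rule to $x_i^2 = x_i\cdot x_i$ and substitute the defining values of $\pdo_n$ and $\sdo_n$ on generators. The three case checks all match, so nothing more is needed (your closing aside about the sign being partly controlled by $\sdo_n(x_i)$ for $i\le n-2$ is immaterial, since those terms are killed by $\pdo_n(x_i)=0$; the real source of the nonzero answer is $\sdo_n(x_n)=x_{n-1}$, $\sdo_n(x_{n-1})=x_n$).
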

\begin{proof}
	Follows by a simple calculation from the definitions.
\end{proof}

\begin{lem}
	\label{lem:DelemSF}
The elements $\elemdo_k$, for $1\le k \le n$, commute with each other. Moreover, we have $\elemdo_k \in \dodd{\Lambda}_n$ for each $k$. 
\end{lem}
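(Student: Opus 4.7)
The key structural observation is that each square $x_i^2$ is central in $\Pol_n^-$: from $x_i x_j = -x_j x_i$ for $i\neq j$, squaring one side gives $x_i^2 x_j = x_j x_i^2$. Consequently $\elemdo_k$ for $1\leq k\leq n-1$ is a polynomial in the central elements $x_1^2,\dots,x_n^2$ and is itself central, while $\elemdo_n=x_1\cdots x_n$ automatically commutes with every $\elemdo_k$ for $k\leq n-1$. Pairwise commutativity follows.

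For the membership assertion, I would split into the cases $k\leq n-1$ and $k=n$. When $k\leq n-1$, note that $\elemdo_k=\elembo_k$ is a type B elementary symmetric function in the sense of \eqref{def:SFb}, so by \lemref{lem:boddelem} it already lies in $\aodd{\Lambda}_n=\bigcap_{i=1}^{n-1}\ker(\pso_i)$. Thus it suffices to verify $\pdo_n(\elembo_k)=0$. I would expand
\[
\elembo_k(x_1,\dots,x_n)=A_k+(x_{n-1}^2+x_n^2)A_{k-1}+x_{n-1}^2 x_n^2\, A_{k-2},
\]
where $A_j:=\elembo_j(x_1,\dots,x_{n-2})$ (with $A_0=1$, $A_j=0$ for $j<0$ or $j>n-2$), and apply the Leibniz rule for $\pdo_n$. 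Three inputs finish the calculation: (i) $\pdo_n(A_j)=0$, since $A_j$ is a polynomial in $x_1^2,\dots,x_{n-2}^2$ on which $\sdo_n$ acts as the identity (as $\sdo_n(x_i)=-x_i$ for $i\leq n-2$) and $\pdo_n$ annihilates each $x_i^2$ by \lemref{lem:pdoevenpowers}; (ii) $\pdo_n(x_{n-1}^2+x_n^2)=(x_n+x_{n-1})+(-x_n-x_{n-1})=0$ by \lemref{lem:pdoevenpowers}; (iii) $\pdo_n(x_{n-1}^2 x_n^2)=\pdo_n(x_{n-1}^2)\,x_n^2+\sdo_n(x_{n-1}^2)\pdo_n(x_n^2)=(x_n+x_{n-1})x_n^2+x_n^2(-x_n-x_{n-1})$, which vanishes by centrality of $x_n^2$ (note that $\sdo_n(x_{n-1}^2)=x_n^2$ rather than $x_{n-1}^2$, which is the main sign subtlety to watch).

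For $k=n$, I would verify $\pso_i(x_1\cdots x_n)=0$ and $\pdo_n(x_1\cdots x_n)=0$ directly by iterated Leibniz. For $\pso_i$, write $x_1\cdots x_n=h\cdot(x_i x_{i+1})\cdot g$ with $h=x_1\cdots x_{i-1}$ and $g=x_{i+2}\cdots x_n$; since $\pso_i$ annihilates any monomial in variables different from $x_i,x_{i+1}$ (this is an easy induction using $\pso_i(x_j)=0$ and $\sso_i(x_j)=-x_j$ for $j\neq i,i+1$), both $\pso_i(h)$ and $\pso_i(g)$ vanish, and $\pso_i(x_i x_{i+1})=x_{i+1}+\sso_i(x_i)\cdot 1=x_{i+1}-x_{i+1}=0$. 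The type D check is analogous: $\pdo_n(x_1\cdots x_{n-2})=0$ by the same reasoning, and $\pdo_n(x_{n-1}x_n)=1\cdot x_n+\sdo_n(x_{n-1})\cdot(-1)=x_n-x_n=0$.

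The only place where genuine care is required is the Leibniz computation for $\pdo_n(x_{n-1}^2 x_n^2)$, because $\sdo_n$ interchanges $x_{n-1}^2$ and $x_n^2$ (without the extra minus sign that one encounters in type A or B); everything else is a routine application of the Leibniz rule together with \lemref{lem:evenatoodda}, \lemref{lem:boddelem}, and \lemref{lem:pdoevenpowers}.
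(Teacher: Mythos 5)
Your proof is correct, and its overall frame coincides with the paper's: commutativity from centrality of the squares $x_i^2$, membership of $\elemdo_k=\elembo_k$ ($k\le n-1$) in $\aodd{\Lambda}_n$ via \lemref{lem:boddelem}, a direct Leibniz check that $\pso_i$ and $\pdo_n$ kill $\elemdo_n=x_1\cdots x_n$, and then the reduction to showing $\pdo_n(\elemdo_k)=0$ using $\dodd{\Lambda}_n=\aodd{\Lambda}_n\cap\ker(\pdo_n)$. The one step where you genuinely diverge is the verification of $\pdo_n(\elemdo_k)=0$ for $k\le n-1$: the paper argues by induction on $n$, peeling off the first variable via $\elemdo_k(x_1,\dots,x_n)=\elemdoprime_k(x_2,\dots,x_n)+x_1^2\,\elemdoprime_{k-1}(x_2,\dots,x_n)$ and using only $\pdo_n(x_1^2)=0$ together with the inductive hypothesis, whereas you give a non-inductive decomposition $\elembo_k=A_k+(x_{n-1}^2+x_n^2)A_{k-1}+x_{n-1}^2x_n^2A_{k-2}$ that isolates the two variables on which $\pdo_n$ acts nontrivially, and then reduce everything to \lemref{lem:pdoevenpowers}, the closure of $\ker(\pdo_n)$ under products (Leibniz), and the two explicit identities $\pdo_n(x_{n-1}^2+x_n^2)=0$ and $\pdo_n(x_{n-1}^2x_n^2)=0$; your handling of $\sdo_n(x_{n-1}^2)=x_n^2$ in the latter is exactly the sign point that needs care, and you get it right. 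Your route buys a shorter, more transparent argument that makes visible exactly where the type D structure enters; the paper's induction buys uniformity with its other recursive arguments (e.g.\ the recursion on shifted primed data used again in \lemref{lem:bsymprime}). Either way the lemma is established.
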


\begin{proof}
The commutativity is clear since $\elemdo_k$, for $k \le n-1$, have even degree in each $x_i$. 

We have seen that $\elemdo_k =\elembo_k \in \aodd{\Lambda}_n$, for $1 \leq k \le n-1$. One checks directly that
$\pso_i(\elemdo_n) = 0$, for $1 \leq i \le n-1$, and so $\elemdo_n \in \aodd{\Lambda}_n$. (Alternatively, 
$\elemdo_n =\pm \aodd{\epsilon}_n \in \aodd{\Lambda}_n$ by \cite{EKL14}.)
It remains to show that $\pdo_n(\elemdo_k) = 0$ for $1\leq k\leq n$, since
$\dodd{\Lambda}_n =\aodd{\Lambda}_n \cap \ker(\pdo_n)$.

We first check $\pdo_n(\elemdo_n) = 0$. Indeed, 
	\begin{align*}
		\pdo_n(\elemdo_n) &= \pdo_n(x_1x_2\dotsm x_n) \\
		&= (-1)^{n-2}x_1\dotsm x_{n-2} \pdo_n(x_{n-1}x_n) \\
		&= (-1)^{n-2}x_1\dotsm x_{n-2} (x_n - x_n) = 0.
	\end{align*}
 It follows that $\pbo_n(\elembo_n) = 0$ thanks to $\elembo_n = (-1)^{{n \choose 2}} (\elemdo_n)^2.$ 

We next show $\pdo_n(\elemdo_k) = 0$, for $1 \leq k \le n-1$, by induction on $n$; the base case $n=2$ is trivial using~\lemref{lem:pdoevenpowers}.
Let $n > 2$. Note that, for $1 \leq k \le n-1$,
	\begin{align*}
		\elemdo_k(x_1,\dotsc,x_n) 
		&= \elemdoprime_k(x_2,\dotsc,x_n) + x_1^2\; \dodd{\varepsilon'}_{k-1}(x_2,\dotsc,x_n),
	\end{align*}
where $\dodd{\varepsilon'}_{k-1}$ is the same as $\elemdo_{k-1}$, but with the indices of all variables shifted by 1 as indicated. Using the Leibniz rule and the inductive hypothesis we have
	\begin{align*}
		\pdo_n(\elemdo_k(x_1,\dotsc,x_n)) 
		&= \pdo_n \left(\dodd{\varepsilon'}_k(x_2,\dotsc,x_n) + x_1^2\; \dodd{\varepsilon'}_{k-1}(x_2,\dotsc,x_n) \right) 
		= 0 + x_1^2 \cdot 0 = 0.
	\end{align*} 
The lemma is proved. 
\end{proof}

\begin{prop}
	\label{prop:dodd}
The algebra $\dodd{\Lambda}_n$ is a polynomial algebra generated by $\elemdo_1, \elemdo_2, \ldots, \elemdo_n$.
\end{prop}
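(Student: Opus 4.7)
The plan is to adapt the strategy of \thmref{thm:bodd}. Set $A_n := \Z[\elemdo_1, \ldots, \elemdo_n]$. \lemref{lem:DelemSF} already gives the inclusion $A_n \subseteq \dodd{\Lambda}_n$, so the task is to establish algebraic independence of the $\elemdo_k$ and the reverse inclusion $\dodd{\Lambda}_n \subseteq A_n$.

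For algebraic independence, I would first observe that $\elemdo_k = \elembo_k$ for $1\le k \le n-1$ lies in $\Z[x_1^2, \ldots, x_n^2]$, which is central in $\tyodd{\Pol}_n$, while $\elemdo_n = x_1 \cdots x_n$ commutes with every $\elembo_k$; hence $A_n$ is a commutative subring. Using the identity $(\elemdo_n)^2 = (-1)^{\binom{n}{2}} \elembo_n$, any putative polynomial relation in the $\elemdo_k$ can be rewritten as $A(\elembo_1, \ldots, \elembo_n) + B(\elembo_1, \ldots, \elembo_n) \elemdo_n = 0$ for polynomials $A,B$. The two summands lie in distinct cosets of $(2\Z)^n$ inside the natural $\Z^n$-multigrading of $\tyodd{\Pol}_n$ (even versus odd degree in every $x_i$), so each must vanish separately. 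Since $\tyodd{\Pol}_n$ is a skew-domain and $\elemdo_n \ne 0$, both $A$ and $B$ vanish as polynomials in $\bodd{\Lambda}_n$, which by \thmref{thm:bodd} is itself a polynomial algebra.

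For the reverse inclusion, I would mimic the Claim in the proof of \thmref{thm:bodd}: show that both $A_n$ and $\dodd{\Lambda}_n$ admit free abelian group complements in $\tyodd{\Pol}_n$, and then compare graded ranks after reduction modulo~$2$. The free complement for $\dodd{\Lambda}_n$ follows verbatim from the divisibility argument in \thmref{thm:bodd}: any nontrivial integer divisor of a free summand could be cleared, contradicting its membership in $\bigcap_{i=1}^{n-1}\ker(\pso_i)\cap\ker(\pdo_n)$. The free complement for $A_n$ follows from a leading-term analysis with respect to a lexicographic order on skew-monomials, as each product $\elemdo_n^{\varepsilon} \prod_k \elembo_k^{a_k}$ with $\varepsilon \in \{0,1\}$ has a highest monomial with coefficient $\pm 1$. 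Modulo~$2$ all spin/non-spin sign discrepancies vanish, so $\tyodd{\Pol}_n \otimes \Z_2$ coincides with $\Pol_n \otimes \Z_2$, and $\dodd{\Lambda}_n \otimes \Z_2$ coincides with $\deven{\Lambda}_n \otimes \Z_2$, as $\Z_2$-graded rings. From Appendix~\ref{appendix:A}, $\deven{\Lambda}_n = \tyb{\Lambda}_n[\x_1 \cdots \x_n]$ is a polynomial algebra on $n$ generators of the same degrees as those generating $A_n$, so their graded ranks match. Then the inclusion $A_n \otimes \Z_2 \subseteq \dodd{\Lambda}_n \otimes \Z_2$ must be an equality by a graded dimension count, and the free complement property lifts this back to the integral equality $A_n = \dodd{\Lambda}_n$.

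The main obstacle I expect is the free-complement verification for $A_n$ inside $\tyodd{\Pol}_n$, where the skew structure means one must pin down enough of the monomial combinatorics to certify unit leading coefficients. The mitigating fact is that every generator of $A_n$ is either central (the $\elembo_k$) or contributes a single, tightly controlled factor $\elemdo_n$, so the lexicographic argument reduces to a familiar one. Once the complement property is in hand, the spin/non-spin coincidence modulo~$2$ and the Appendix description of $\deven{\Lambda}_n$ close out the proof by a routine dimension count.
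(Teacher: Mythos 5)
Your proposal is correct and follows essentially the same route as the paper, whose proof of this proposition simply repeats the argument of \thmref{thm:bodd} with \lemref{lem:DelemSF} in place of \lemref{lem:evenatoodda}: inclusion via the spin elementary symmetric functions, free abelian complements for both subrings, and a graded-rank comparison after reduction modulo $2$ against the non-spin ring $\deven{\Lambda}_n$ from Appendix~\ref{appendix:A}. Your extra explicit verification of algebraic independence (parity of the $\Z^n$-multigrading plus $(\elemdo_n)^2 = \pm\,\elembo_n$) is a harmless elaboration of what the paper leaves implicit.
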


\begin{proof}
The same as the proof of~\thmref{thm:bodd}, using \lemref{lem:DelemSF} in place of \lemref{lem:evenatoodda}.
\end{proof}

\begin{cor}
\label{cor:rkSFDn}
We have $\bodd{\Lambda}_n \subset \dodd{\Lambda}_n$, where $\dodd{\Lambda}_n$ has graded rank
	\begin{equation*}
		\rk_q(\dodd{\Lambda}_n) = q^{-n(n-1)} \frac{1}{(1-q^2)^n} \frac{1}{[n][2n-2]!!}.
	\end{equation*}
\end{cor}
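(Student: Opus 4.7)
My plan is to derive both assertions directly from the explicit polynomial-ring description of $\dodd{\Lambda}_n$ supplied by \propref{prop:dodd}, so no new conceptual input is required.

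For the inclusion $\bodd{\Lambda}_n \subset \dodd{\Lambda}_n$, I would invoke \thmref{thm:bodd} to reduce the problem to checking that each of the generators $\elembo_1,\dotsc,\elembo_n$ of $\bodd{\Lambda}_n$ lies in $\dodd{\Lambda}_n$. Comparing \eqref{def:SFb} with \eqref{def:SFd} immediately gives $\elembo_k = \elemdo_k$ for $1 \le k \le n-1$. For the top generator, the identity $(\elemdo_n)^2 = (-1)^{\binom{n}{2}} \elembo_n$---obtained by repeatedly swapping pairs of variables via \eqref{eq:anti} in $x_1\dotsm x_n \cdot x_1 \dotsm x_n$, and already recorded in the proof of \lemref{lem:DelemSF}---shows that $\elembo_n$ is a scalar multiple of $(\elemdo_n)^2$, hence lies in $\dodd{\Lambda}_n$.

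For the graded rank, \propref{prop:dodd} presents $\dodd{\Lambda}_n$ as a polynomial algebra on $\elemdo_1,\dotsc,\elemdo_n$. Since the grading \eqref{eq:grading} assigns $|x_i|=2$, the generators carry degrees $|\elemdo_k|=4k$ for $1\le k\le n-1$ and $|\elemdo_n|=2n$, so the Hilbert series is immediately
$$\rk_q(\dodd{\Lambda}_n) = \frac{1}{1-q^{2n}}\prod_{k=1}^{n-1}\frac{1}{1-q^{4k}}.$$

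The final step is the algebraic verification that this agrees with the expression in the corollary. Using the formula $[m]=q^{-m+1}(1-q^{2m})/(1-q^2)$ implicit in \eqref{eq:doublefactorial}, together with the arithmetic identity $\sum_{k=1}^{n-1}(2k-1)=(n-1)^2$, I would obtain
$$[2n-2]!! = q^{-(n-1)^2}\frac{\prod_{k=1}^{n-1}(1-q^{4k})}{(1-q^2)^{n-1}}, \qquad [n] = q^{-(n-1)}\frac{1-q^{2n}}{1-q^2},$$
whose product is $[n][2n-2]!! = q^{-n(n-1)}(1-q^{2n})\prod_{k=1}^{n-1}(1-q^{4k})/(1-q^2)^n$. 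Substituting into $q^{-n(n-1)}(1-q^2)^{-n}/([n][2n-2]!!)$ reproduces the Hilbert series above. I expect no substantive obstacle; the only point demanding care is the bookkeeping of $q$-exponents in the $q$-factorial simplification.
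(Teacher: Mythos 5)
Your proposal is correct and follows essentially the same route as the paper: the inclusion comes from the generator identities $\elembo_k=\elemdo_k$ ($k\le n-1$) and $\elembo_n=(-1)^{\binom n2}(\elemdo_n)^2$ together with \thmref{thm:bodd} and \lemref{lem:DelemSF}, and the graded rank comes from the polynomial-algebra presentation in \propref{prop:dodd}. The only difference is cosmetic: the paper cites the identical formula for the usual type D invariants (Corollary~\ref{cor:blambdarkeven}), whereas you verify the $q$-identity from the generator degrees directly, and your bookkeeping checks out.
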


\begin{proof}
The inclusion follows by noting $\elembo_k = \elemdo_k$ ($k\neq n$) and $\elembo_n = (-1)^{{n \choose 2}} (\elemdo_n)^2.$ It follows by \propref{prop:dodd} the graded rank is the same as for the usual type D.
\end{proof}

 The following is a nil version of \cite[Proposition~4.6]{KW08}.
\begin{cor}
	\label{cor:bcenter}
 The centers of the algebras $\bodd{\NH}_n$ and $\dodd{\NH}_n$ are $\Z[x_1^2, \ldots, x_n^2]^{S_n}$.
 \end{cor}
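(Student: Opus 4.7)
Proof plan: The approach is to prove both inclusions using the faithful skew-polynomial representation $\Pol_n^{-}$. Write $\Lambda := \Z[x_1^2,\ldots,x_n^2]^{S_n}$, which equals $\bodd{\Lambda}_n$ by \thmref{thm:bodd} and sits inside $\dodd{\Lambda}_n$ by \corref{cor:rkSFDn}.

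For $\Lambda$ lying in the center, take $z \in \Lambda$ and verify that left multiplication by $z$ commutes with every algebra generator. Since every monomial of $z$ has even degree in each $x_i$, $z$ commutes with each $x_j$ in $\Pol_n^-$. For each odd Demazure operator, apply the Leibniz rule: e.g.\ $\pso_i(zf) = \pso_i(z)f + \sso_i(z)\pso_i(f)$. The first summand vanishes since $z \in \ker \pso_i$ by definition of $\bodd{\Lambda}_n$, and the second equals $z \pso_i(f)$ because $\sso_i$ acts on the $x_j^2$'s as the transposition $s_i \in S_n$ (without sign), fixing the $S_n$-symmetric $z$. Thus $\pso_i L_z = L_z \pso_i$ as operators on $\Pol_n^-$, hence as elements of the algebra by faithfulness. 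The same reasoning handles $\pbo_n$ (using $\sbo_n(x_j^2) = x_j^2$) and $\pdo_n$ (using $\pdo_n(z) = 0$ from $\Lambda \subseteq \dodd{\Lambda}_n$, and $\sdo_n$ acting on $\{x_j^2\}$ as a permutation).

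For the reverse inclusion, let $z$ be (super-)central and set $v := z(1) \in \Pol_n^-$. Commutation of $z$ with each $L_{x_i}$, iterated, forces $z(g) = \pm\, gv$ for homogeneous $g$ (signs determined by parities). Next, the identity $z \circ \pso_i = \pm\, \pso_i \circ z$ applied to an arbitrary $f$ and expanded via Leibniz yields $\sso_i(f) \pso_i(v) = 0$ for all $f$; setting $f = 1$ gives $\pso_i(v) = 0$. The same argument with $\pbo_n$ (resp.\ $\pdo_n$) yields $\pbo_n(v) = 0$ (resp.\ $\pdo_n(v) = 0$). Hence $v \in \bodd{\Lambda}_n$ in type B and $v \in \dodd{\Lambda}_n$ in type D.

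In type B, $v \in \bodd{\Lambda}_n$ commutes with every element of $\Pol_n^-$, so the operator $g \mapsto gv$ coincides with $L_v$; by faithfulness, $z = v \in \Lambda$. In type D one writes $v = a + b\cdot \elemdo_n$ with $a, b \in \bodd{\Lambda}_n$ using the decomposition $\dodd{\Lambda}_n = \bodd{\Lambda}_n \oplus \bodd{\Lambda}_n \cdot \elemdo_n$, and the remaining job is to show $b = 0$. Since $\elemdo_n = x_1 \cdots x_n$ has odd degree in each $x_j$, comparison of the action $g \mapsto gv$ with $L_v$ under the $\Z_2$-grading forces the $b\cdot \elemdo_n$ contribution to violate the (super-)commutation condition with the odd generators unless $b = 0$; this pins down $v \in \bodd{\Lambda}_n$ and then $z = v$ as before.

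The main obstacle is the very last step in type D: because $\dodd{\Lambda}_n$ strictly contains $\bodd{\Lambda}_n$, one must carefully exploit the $\Z_2$-grading to exclude the $\elemdo_n$-component of $v$. This is the place where the proof genuinely relies on the superalgebra structure rather than on the polynomial-representation argument used uniformly in the rest of the proof.
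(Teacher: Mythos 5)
Your type B argument is correct and complete, and it takes a genuinely different route from the paper's: the paper obtains the inclusion of the center of $\bodd{\NH}_n$ into $\Z[x_1^2,\ldots,x_n^2]^{S_n}$ by citing the type A result of \cite{EKL14} for the subalgebra $\aodd{\NH}_n$, and only verifies the reverse inclusion directly, whereas you work entirely inside the faithful representation $\tyodd{\Pol}_n$: a central $z$ acts by $f\mapsto fv$ with $v=z(1)$, commutation with the odd Demazure operators forces $\pso_i(v)=\pbo_n(v)=0$, hence $v\in\bodd{\Lambda}_n=\Z[x_1^2,\ldots,x_n^2]^{S_n}$ by \thmref{thm:bodd}, and since such a $v$ has even degree in each variable, $fv=vf$, so $z$ and $v$ act identically and $z=v$ by \corref{cor:bfaithful}. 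That half is fine, and arguably cleaner than the citation-based argument.

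The type D endgame, however, has a genuine gap: the parity obstruction you invoke to force $b=0$ does not exist. By \lemref{lem:DelemSF} one has $\pso_i(\elemdo_n)=\pdo_n(\elemdo_n)=0$, while $\sso_i(\elemdo_n)=(-1)^{n+1}\elemdo_n$, $\sdo_n(\elemdo_n)=(-1)^{n-1}\elemdo_n$, and $x_j\,\elemdo_n=(-1)^{n-1}\elemdo_n\,x_j$. So for odd $n$ the element $\elemdo_n=x_1\cdots x_n$ \emph{commutes} (it does not anticommute) with every generator $x_j,\pso_i,\pdo_n$, hence by faithfulness it is itself central in $\dodd{\NH}_n$; a component $b\,\elemdo_n$ therefore violates no (super)commutation relation, and your reduction only yields $v\in\dodd{\Lambda}_n$, which strictly contains the target ring. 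For even $n$ the ``comparison under the $\Z_2$-grading'' is vacuous as well, since $\elemdo_n$ then has even parity; in fact right multiplication by $\elemdo_n$ is a right $\dodd{\Lambda}_n$-module endomorphism of $\tyodd{\Pol}_n$ (as $\dodd{\Lambda}_n$ is commutative and contains $\elemdo_n$), so by \thmref{thm:diso} it is realized by an element of $\dodd{\NH}_{n,\Q}$ commuting with the image of the whole algebra, i.e.\ a central element that is not left multiplication by anything in $\Z[x_1^2,\ldots,x_n^2]^{S_n}$ (for $n=2$ the forced PBW coefficients from \propref{prop:linearreld} even come out integral). So ``$b=0$'' is not merely unproved: it cannot be forced by a parity argument, and the type D inclusion needs an entirely different justification. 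This also suggests the type D half of the statement itself (and the type A input quoted from \cite{EKL14} in the paper's proof, since the same computation shows $x_1\cdots x_n$ commutes with all generators of $\aodd{\NH}_n$ when $n$ is odd) must be interpreted or restated with care.
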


\begin{proof}
The quickest way is to refer to \cite[Proposition~2.15]{EKL14}, which states that the center of $\aodd{\NH}_n$ is $\Z[x_1^2, \ldots, x_n^2]^{S_n}$.
As both $\bodd{\NH}_n$ and $\dodd{\NH}_n$ contain $\aodd{\NH}_n$ as a subalgebra, so their centers are included in $\Z[x_1^2, \ldots, x_n^2]^{S_n}$. On the other hand, it is easy to check that each element in $\Z[x_1^2, \ldots, x_n^2]^{S_n}$ commutes with $\pbo_n$ and $\pdo_n$, and so it is central in $\bodd{\NH}_n$ or $\dodd{\NH}_n$. 
\end{proof}

\section{Spin Schubert polynomials of classical type}

In this section we introduce the spin type B/D Schubert polynomials. We compute the Schubert polynomials associated to the identity element of the type B/D Weyl groups as some explicit nonzero constants. 

\subsection{Spin type B Schubert polynomials}
\label{subsec:schubertb}

We denote by $B_n =\langle s_1, \ldots, s_{n-1}, \sbn_n \rangle$ the Weyl group of type $B_n$. 
When there is no confusion, we also write $s_n =\sbn_n$. 
For $w \in B_n$, we choose a reduced expression $w = s_{i_1}\dotsm s_{i_\ell}$ for $w$ in terms of simple transpositions and define $\pbo_w = \pso_{i_1}\dotsm \pso_{i_\ell}$.
A different choice of reduced expression for $w$ gives the same $\pbo_w$ only up to a sign.

We introduce a shorthand $s_{a..b}$ to denote the consecutive product from $s_a$ to $s_b$; similarly, $s_{a..b..c}$ denotes the consecutive product from $s_a$ to $s_b$ and then to $s_c$. For example, we have $s_{1..n..1}=s_1 s_2\dotsm s_{n-1}\sbn_n s_{n-1}\dotsm s_2 s_1$. 

We choose the following reduced expression for the longest element $\tyb{w}_n$ in $B_n$:
\begin{equation}
	\label{w0Bn}
	   w_n = \tyb{w}_n = s_{1..n..1} \cdot s_{2..n..2} \cdot \ldots \cdot s_{(n-1)..n..(n-1)} \cdot
	   \sbn_n.
\end{equation}

For an $n$-tuple of integers  $\underline{r} = (r_1,\dotsc,r_n)$, we write
$\underline{x}^{\underline{r}} = x_1^{r_1}\dotsm x_n^{r_n}.$
Set
	\begin{equation}
		\deltab_n = (2n-1,2n-3,\dots,1),
		\qquad 
		\underline{x}^{\deltab_n} =x_1^{2n-1}x_2^{2n-3} \ldots x_n.
	\end{equation}
We define the spin type B Schubert polynomials to be, for $w \in B_n$,
	\begin{equation}
		\schubo_w(x_1,\dots,x_n) = \pbo_{w^{-1}w_n}(\underline{x}^{\deltab_n}).
	\end{equation}
The following formulas hold, for $w, u \in B_n$:
\begin{equation}
	\label{eq:partial1}
	\pbo_w\pbo_u = \begin{cases}
		\pm\pbo_{wu} & \text{if }\ell(wu) = \ell(w) + \ell(u) \\
		0 &\text{otherwise},
	\end{cases}
\end{equation}
and
\begin{equation}
	\label{eq:partial2}
	\pbo_u \schubo_w = \begin{cases}
		\pm\schubo_{wu^{-1}} & \text{if }\ell(wu^{-1})=\ell(w)-\ell(u) \\
		0 &\text{otherwise}.
	\end{cases}
\end{equation}

Our next goal is to compute $\schubo_e$, where $e \in B_n$ is the identity.

Denote by $\beven{\NH'}_{n-1}^{-}$ the subalgebra of $\bodd{\NH}_n$ generated by $\pso_i$ and $x_i$ for $2\leq  i \leq n$, which is isomorphic to $\bodd{\NH}_{n-1}$. We shall use the prime notation to denote items associated to $\beven{\NH'}_{n-1}^{-}$ systematically, such as  
	\begin{align}  
	 \label{def:shiftedb}
	 \begin{split}
		\tyb{w'}_{n-1} &= s_{2..n..2} \cdot s_{3..n..3}  \cdot \ldots \cdot (s_{n-1}\sbn_n s_{n-1})\cdot \sbn_n,
 \\
		{}^\mathfrak{b}\!{\Pol}'^{\text{--}}_{n-1} &= \Z[ x_2,\dots,x_n] \subset \tyodd{\Pol}_n, 
		\\
		{}^\mathfrak{b}\!{\Lambda}'^{\text{--}}_{n-1} &= \bigcap_{i=2}^n \ker (\pso_i: {}^\mathfrak{b}\!{\Pol}'^{\text{--}}_{n-1} \rightarrow {}^\mathfrak{b}\!{\Pol}'^{\text{--}}_{n-1}), 
		\\
		{}^\mathfrak{b}\!{\delta}'^{\text{--}}_{n-1} &= (2n-3,  \dots,3,1). 
		\end{split}
	\end{align}

The following is a type A analogue of~\lemref{lem:pbo}.

\begin{lem}
	\label{lem:pso}
	For $1\leq i < n$, we have
	\begin{equation}
		\pso_i(x_i^k) = \sum_{j=1}^k (-1)^{j-1} x_{i+1}^{j-1} x_i^{k-j},
	\end{equation}
	\begin{equation}
		\pso_i(x_{i+1}^k) = \sum_{j=1}^k (-1)^{j-1} x_i^{j-1} x_{i+1}^{k-j}.
	\end{equation}
\end{lem}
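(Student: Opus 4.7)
The plan is to prove both formulas by induction on $k$, using the Leibniz rule \eqref{eq:leibnizb} together with the explicit values $\pso_i(x_i) = \pso_i(x_{i+1}) = 1$ from \eqref{eq:pbo} and $\sso_i(x_i) = -x_{i+1}$, $\sso_i(x_{i+1}) = -x_i$ from \eqref{eq:sso}.

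For the first identity, I will write $x_i^k = x_i \cdot x_i^{k-1}$ and apply the Leibniz rule to obtain
\begin{equation*}
\pso_i(x_i^k) = \pso_i(x_i)\, x_i^{k-1} + \sso_i(x_i)\, \pso_i(x_i^{k-1}) = x_i^{k-1} - x_{i+1}\, \pso_i(x_i^{k-1}).
\end{equation*}
The base case $k=1$ reduces to $\pso_i(x_i) = 1$, which matches the claimed sum. For the inductive step, I substitute the inductive expression for $\pso_i(x_i^{k-1})$ and reindex the summation (shifting $j \mapsto j+1$ absorbs the extra factor of $-x_{i+1}$) to recover the stated formula. The second identity is handled in exactly the same way, writing $x_{i+1}^k = x_{i+1} \cdot x_{i+1}^{k-1}$ and using $\sso_i(x_{i+1}) = -x_i$.

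There is no real obstacle here: the argument is a direct bookkeeping exercise in the Leibniz rule, entirely parallel to the analogous type A computation in \cite[(2.9)-(2.10)]{EKL14}. The only care needed is to keep track of the minus signs arising from $\sso_i$ and to verify the reindexing cleanly; no appeal to the more delicate braid/mixed relations \eqref{eq:brel.2}--\eqref{eq:brel.3} is required.
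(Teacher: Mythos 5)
Your proposal is correct and is exactly the argument the paper intends: the paper's proof is the one-line remark that the lemma ``follows by a simple induction on $k$ and the Leibniz rule for $\pso_i$,'' and your decomposition $x_i^k = x_i\cdot x_i^{k-1}$ with the reindexing $j\mapsto j+1$ fills in precisely those details. No further comment is needed.
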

\begin{proof}
It follows by a simple induction on $k$ and the Leibniz rule for $\pso_i$. 
\end{proof}

\begin{prop}
	\label{prop:schubo}
	We have $\schubo_e = \pm1$.
\end{prop}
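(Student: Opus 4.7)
The plan is induction on $n$; the base case $n=1$ is immediate from $\pbo_1(x_1)=1$. For the inductive step, use the decomposition $w_n = s_{1..n..1}\cdot \tyb{w'}_{n-1}$ afforded by~\eqref{w0Bn} and~\eqref{def:shiftedb}, where $\tyb{w'}_{n-1}$ is the longest element of the shifted $B_{n-1}$-subgroup generated by $s_2,\dots,s_{n-1},\sbn_n$. Every generator of $\pbo_{\tyb{w'}_{n-1}}$ anti-commutes with $x_1$ by~\eqref{eq:arel.5} and~\eqref{eq:brel.5}, so this operator skew-commutes past $x_1^{2n-1}$ up to a $\pm 1$ sign; the inductive hypothesis applied to the shifted $\bodd{\NH}_{n-1}$ yields $\pbo_{\tyb{w'}_{n-1}}(x_2^{2n-3}\cdots x_n)=\pm 1$. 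This reduces the problem to showing $\pbo_{s_{1..n..1}}(x_1^{2n-1})=\pm 1$.

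To compute this, further decompose $s_{1..n..1}=s_1\cdot N\cdot s_1$ with $N:=s_{2..n..2}$, so that $\pbo_{s_{1..n..1}}=\pso_1\pbo_N\pso_1$. Expand $\pso_1(x_1^{2n-1})=\sum_{j=0}^{2n-2}(-1)^j x_2^j x_1^{2n-2-j}$ via \lemref{lem:pso} and pass $\pbo_N$ through the $x_1$-factors: each generator of $\pbo_N$ anti-commutes with $x_1$, and a parity check shows the net sign is $+1$, so $\pbo_N(x_2^j x_1^{2n-2-j})=x_1^{2n-2-j}\pbo_N(x_2^j)$. On grading grounds (with $|\pbo_N|=-(4n-6)$), only the terms with $j=2n-3$ and $j=2n-2$ contribute. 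A shifted instance of the same induction gives $\pbo_N(x_2^{2n-3})=\epsilon$ with $\epsilon=\pm 1$, which contributes the summand $-\epsilon\, x_1$; the outer $\pso_1$ then sends this to $-\epsilon=\pm 1$.

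The principal obstacle is to verify that the excess $j=2n-2$ contribution is annihilated by the outer $\pso_1$, for which I would establish the stronger vanishing $\pbo_N(x_2^{2n-2})=0$ by a further induction. Decompose $N=s_2 N' s_2$ with $N'=s_{3..n..3}$, and repeat the expansion/anti-commutation/grading analysis on $\pbo_N(x_2^{2n-2})=\pso_2\pbo_{N'}\pso_2(x_2^{2n-2})$: the shifted inductive vanishing $\pbo_{N'}(x_3^{2n-4})=0$ together with the shifted $A_1$-identity on the remaining surviving summands, after careful sign-tracking, show that $\pbo_{N'}\pso_2(x_2^{2n-2})$ is $\sso_2$-invariant and hence is killed by the outer $\pso_2$. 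Conceptually, this is the spin analog of the classical non-spin identity $\partial_N(\x_2^{2n-2})=0$, valid since $\x_2^{2n-2}$ is fixed by the reflection $r_{e_2}$ that the Weyl element $N$ represents; the spin version can be formalized via the correspondence between spin and non-spin computations on polynomials in even powers of the variables, in the spirit of~\lemref{lem:evenatoodda}. Assembling all the pieces yields $\pbo_{s_{1..n..1}}(x_1^{2n-1})=\pm 1$, and therefore $\schubo_e=\pm 1$.
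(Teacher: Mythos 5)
Your opening moves coincide with the paper's: induction on $n$, peeling off $\pbo_{\tyb{w'}_{n-1}}$ by the inductive hypothesis after anti-commuting it past $x_1^{2n-1}$, and then a degree count showing that in $\pso_1(x_1^{2n-1})$ only the $x_2$-exponents $2n-3$ and $2n-2$ can survive under $\pbo_N$ (your sign bookkeeping here is fine). The genuine gap is in how you kill the excess term. Your mechanism is ``$\pbo_{N'}\pso_2(x_2^{2n-2})$ is $\sso_2$-invariant, hence killed by $\pso_2$,'' and that inference is simply false for odd Demazure operators: there is no identity expressing $\pso_i$ as $(1-\sso_i)$ divided by a root in this setting. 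Concretely, $x_2^3-x_3^3$ is $\sso_2$-invariant, yet by \lemref{lem:pso} one gets $\pso_2(x_2^3-x_3^3)=2x_2x_3\neq 0$; likewise $x_2x_4+x_3x_4$ is $\sso_2$-invariant with $\pso_2(x_2x_4+x_3x_4)=2x_4\neq 0$. The appeal to \lemref{lem:evenatoodda} ``in spirit'' does not rescue this: that lemma concerns polynomials in the squares $x_i^2$ only, while the intermediate outputs of your computation (already $\pso_2(x_2^{2n-2})$) are not of that form, so the spin/non-spin transfer cannot be iterated along the word $N$.

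There is a second, structural problem: your mutual induction does not close. Running your own expansion on $\pbo_N(x_2^{2n-2})=\pso_2\pbo_{N'}\pso_2(x_2^{2n-2})$, the surviving $x_3$-exponents are $2n-5$, $2n-4$ and $2n-3$; the first two are handled by your claims (i) and (ii) at lower rank, but the third produces $\pbo_{N'}(x_3^{2n-3})$, a degree-$4$ output belonging to neither inductive family, and you need to know it equals $\pm x_3^2$ so that the total is $\mp(x_2^2+x_3^2)$, which genuinely lies in $\ker\pso_2$. (For $n=3$ one checks $\pbo_{N'}\pso_2(x_2^4)=-x_2^2-x_3^2$, so the vanishing you want is true; it just is not proved by your argument.) To repair this you would have to enlarge the induction to control $\pbo_{s_{k..n..k}}$ on the next-higher odd power as well, with explicit sign tracking. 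The paper avoids the issue altogether: it keeps both surviving terms and pushes the full nested expansion down to the innermost operator, where \lemref{lem:pso} and \lemref{lem:pbo}, working modulo monomials involving $x_n$, force a pairwise cancellation governed by the parity of the indices $j_k$, after which the expression recurses in the same shape.
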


\begin{proof}
We shall use a shorthand notation similar to \eqref{w0Bn} such as
\[
\pso_{1..n..1}=\pso_1 \pso_2\dotsm \pso_{n-1}\pbo_n \pso_{n-1}\dotsm \pso_2 \pso_1.
\] 

We proceed by induction on $n$, with the base case $n=1$ being clear. 

Assume $n>1$.
	Then
	\begin{align*}
		\schubo_e &= \pbo_{w_n}(\underline{x}^{\deltab_n}) \\
		&= \pso_{1..n..1} \pbo_{w'_{n-1}}(x_1^{2n-1}\underline{x}^{\bodd{\delta'}_{n-1}}) \\
		&= \pso_{1..n..1} (-x_1^{2n-1}\pbo_{w'_{n-1}}(\underline{x}^{\bodd{\delta'}_{n-1}})) \\
		&= \pm\pso_{1..n..2} \pso_{1} (x_1^{2n-1}) \\
		&= \pm\pso_{1..n..2} \left( \sum_{j=1}^{2n-1} (-1)^{j-1} x_2^{j-1}x_1^{2n-j-1} \right) \\
		&= \pm\pso_1\sum_{j=1}^{2n-1} \pso_{2..n..2} \left( (-1)^{j-1} x_2^{j-1} \right) x_1^{2n-j-1}.
	\end{align*}
	Now, the expression $\pbo_2\dotsm\pso_{n-1}\pbo_n\pso_{n-1}\dotsm\pbo_2$ consists of $2n-3$ Demazure operators, each of which will decrease the degree of a polynomial by  1.
Hence the only terms in the above sum which will survive are $j=2n-2,2n-1$, which leads to the following simplification:
	\begin{align*}
		\schubo_e &= \pm\pso_1\sum_{j=2n-2}^{2n-1} \pso_{2..n..2} \left( (-1)^{j-1} x_2^{j-1} \right) x_1^{2n-j-1} \\
		&= \pm\pso_1\sum_{j_1=2n-2}^{2n-1} \pso_{2..n..3}
		 \left( (-1)^{j_1-1} \sum_{j_2=1}^{j_1-1} (-1)^{j_2-1}x_3^{j_2-1}x_2^{j_1-j_2-1} \right) x_1^{2n-j_1-1} \\
		&= \pm\pso_1\sum_{j_1=2n-2}^{2n-1} \pso_2\sum_{j_2=2n-4}^{j_1-1} \pso_{3..n..3}
		\left( (-1)^{j_1-1} (-1)^{j_2-1}x_3^{j_2-1}x_2^{j_1-j_2-1} \right) x_1^{2n-j_1-1}.
	\end{align*}
	Continuing this process,
	\begin{align*}
		\schubo_e &= \pm \sum_{j_1=2n-2}^{2n-1}\sum_{j_2=2n-4}^{j_1-1}\dots\sum_{j_{n-1}=2}^{j_{n-2}-1} \\
		&\qquad\qquad \pso_1\dotsm\pbo_n \left( (-1)^{j_1+\dots+j_{n-1}+n-1} x_n^{j_{n-1}-1} x_{n-1}^{j_{n-2}-j_{n-1}-1} \dotsm x_2^{j_1-j_2-1} x_1^{2n-j_1-1} \right) \\
		&= \pm \sum_{j_1=2n-2}^{2n-1}\sum_{j_2=2n-4}^{j_1-1}\dots\sum_{j_{n-2}=4}^{j_{n-3}-1}\sum_{\substack{j_{n-1}=2 \\ j_{n-1}\text{ even}}}^{j_{n-2}-1} \\
		&\qquad\qquad \pso_1\dotsm\pso_{n-1} \left( (-1)^{j_1+\dots+j_{n-2}+n+1} x_n^{j_{n-1}-2} x_{n-1}^{j_{n-2}-j_{n-1}-1} \dotsm x_2^{j_1-j_2-1} x_1^{2n-j_1-1} \right).
	\end{align*}

	Now, we factor all $x_i$ for $i\leq n-2$ to the right of $\pso_{n-1}$, and consider only the expression $\pso_{n-1}(x_n^{j_{n-1}-2}x_{n-1}^{j_{n-2}-j_{n-1}-1})$.
	After expanding this expression, any monomial terms with nonzero powers of $x_n$ will be annihilated by $\pso_1\dotsm\pso_{n-2}$ for degree reasons and hence can be ignored.
	Thus, we can use the Leibniz rule combined with~\lemref{lem:pso} to simplify this expression: for $j_{n-1}-2\neq 0,1$ and $j_{n-2}-j_{n-1}-1\neq 0,1$, we have (ignoring all terms with nonzero powers of $x_n$)
	\begin{align*}
 \pso_{n-1} & (x_n^{j_{n-1}-2}x_{n-1}^{j_{n-2}-j_{n-1}-1}) \\
		&= \pso_{n-1}(x_n^{j_{n-1}-2})x_{n-1}^{j_{n-2}-j_{n-1}-1} 
		 + (-1)^{j_{n-1}-2}x_{n-1}^{j_{n-1}-2} \pso_{n-1}(x_{n-1}^{j_{n-2}-j_{n-1}-1}) \\
		&\equiv (-1)^{j_{n-1}-3}x_{n-1}^{j_{n-2}-4} + (-1)^{j_{n-1}-2}x_{n-1}^{j_{n-2}-4} 
		= 0\; (\text{modulo monomials involving } x_n).
   \end{align*}
So we need only consider the cases when $j_{n-1}-2 = 0$ or $j_{n-2}-j_{n-1}-1 = 0,1$ (recall $j_{n-1}$ is always even). 
	It is also readily checked that the case where $j_{n-1}-2 > 0$ and $j_{n-2}-j_{n-1}-1=1$, we obtain the same result as above (that the expression equals zero).
	Additionally, the cases $j_{n-1}-2 = 0$ and $j_{n-2}-j_{n-1}-1 = 0$ are mutually exclusive, since $j_{n-2}\geq 4$ in the sum.
	In the case $j_{n-1}-2=0$, we obtain (regardless of the value of $j_{n-2}-j_{n-1}-1$, since it cannot be zero)
	\begin{equation}
		\label{eq:schubo1}
		\pso_{n-1}(x_{n-1}^{j_{n-2}-3}) \equiv x_{n-1}^{j_{n-2}-4} \; (\text{modulo monomials involving } x_n).
	\end{equation}
	In the case $j_{n-2}-j_{n-1}-1=0$, we obtain
	\begin{equation}
		\label{eq:schubo2}
		\pso_{n-1}(x_n^{j_{n-2}-3}) \equiv (-1)^{j_{n-2}-4}x_{n-1}^{j_{n-2}-4} = -x_{n-1}^{j_{n-2}-4}
		\; (\text{modulo monomials involving } x_n),
	\end{equation}
	since $j_{n-1}$ is always even and so $j_{n-2}=j_{n-1}+1$ is odd.

	Now, considering the terms in the sum for a particular value of $j_{n-2}$, we will obtain a contribution from~\eqnref{eq:schubo1} from the term $j_{n-1}=2$; if $j_{n-2}$ is odd, we will also obtain a \emph{distinct} contribution from~\eqnref{eq:schubo2}, which cancels the first contribution.
	Therefore, only even values of $j_{n-2}$ contribute anything to the sum, and we can write
	\begin{align*}
		\schubo_e &= \pm \sum_{j_1=2n-2}^{2n-1}\sum_{j_2=2n-4}^{j_1-1}\dots\sum_{j_{n-3}=6}^{j_{n-4}-1}\sum_{\substack{j_{n-2}=4 \\ j_{n-2}\text{ even}}}^{j_{n-3}-1} \\
		&\qquad\quad \pso_1\dotsm\pbo_{n-2} \left( (-1)^{j_1+\dots+j_{n-3}+n+1} x_{n-1}^{j_{n-2}-2} x_{n-2}^{j_{n-3}-j_{n-2}-1} \dotsm x_2^{j_1-j_2-1} x_1^{2n-j_1-1} \right).
	\end{align*}
	This expression is of the same form as before, and we repeat this same procedure $n-2$ more times to arrive at  
	\begin{equation*}
		\schubo_e 
		= \pm(-1)^{n+1} = \pm1.
	\end{equation*}
The proposition is proved. 
\end{proof}

\begin{lem}
	\label{lem:schubertlen}
	Let $w, u \in B_n$. If $\ell(w) < \ell(u)$, then $(\underline{x}^{\underline{r}}\pbo_u)(\schubo_w) = 0$. Moreover, if $\ell(w) = \ell(u)$, then
	\begin{equation*}
		(\underline{x}^{\underline{r}}\pbo_u)(\schubo_w) = \begin{cases}
			\pm\underline{x}^{\underline{r}} & \text{ if }w=u \\
			0 &\text{otherwise}.
		\end{cases}
	\end{equation*}
\end{lem}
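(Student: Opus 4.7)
The plan is to reduce the statement directly to the formula \eqref{eq:partial2} for the action of $\pbo_u$ on $\schubo_w$, combined with \propref{prop:schubo}. Since $(\underline{x}^{\underline{r}} \pbo_u)(\schubo_w) = \underline{x}^{\underline{r}} \cdot \pbo_u(\schubo_w)$, it is enough to analyze $\pbo_u(\schubo_w)$ and then multiply by $\underline{x}^{\underline{r}}$ at the very end.

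For the first claim, assume $\ell(w) < \ell(u)$. By \eqref{eq:partial2}, $\pbo_u(\schubo_w)$ is nonzero only when $\ell(wu^{-1}) = \ell(w) - \ell(u)$; but the right-hand side is strictly negative while $\ell(wu^{-1}) \geq 0$, so this case cannot occur and $\pbo_u(\schubo_w) = 0$.

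For the second claim, assume $\ell(w) = \ell(u)$. The length condition in \eqref{eq:partial2} becomes $\ell(wu^{-1}) = 0$, which holds if and only if $wu^{-1} = e$, i.e., $w = u$. If $w \neq u$, the vanishing branch of \eqref{eq:partial2} applies and we are done. If $w = u$, then $\pbo_u(\schubo_u) = \pm \schubo_e$, which equals $\pm 1$ by \propref{prop:schubo}, so multiplying by $\underline{x}^{\underline{r}}$ yields $\pm \underline{x}^{\underline{r}}$.

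There is no real obstacle here once one accepts \eqref{eq:partial2}; the lemma is a bookkeeping consequence of the definition $\schubo_w = \pbo_{w^{-1}w_n}(\underline{x}^{\deltab_n})$, the length-additivity rule \eqref{eq:partial1}, and the normalization $\schubo_e = \pm 1$. If \eqref{eq:partial2} itself needed verification, one would combine \eqref{eq:partial1} with the standard Bruhat fact (valid in the finite Coxeter group $B_n$) that $\ell(u \cdot w^{-1}w_n) = \ell(u) + \ell(w^{-1}w_n)$ is equivalent to $\ell(wu^{-1}) = \ell(w) - \ell(u)$, and then write $\pbo_u \pbo_{w^{-1}w_n} = \pm \pbo_{uw^{-1}w_n} = \pm \pbo_{(wu^{-1})^{-1}w_n}$ when this length-additivity holds and $0$ otherwise.
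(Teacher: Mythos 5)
Your proposal is correct and follows the same route as the paper: the paper likewise reduces the lemma to the identity $\pbo_u(\schubo_w)=\pm\schubo_{wu^{-1}}$ when $\ell(wu^{-1})=\ell(w)-\ell(u)$ and $0$ otherwise (derived from \eqref{eq:partial1}--\eqref{eq:partial2}), with the $w=u$ case settled by $\schubo_e=\pm1$ from \propref{prop:schubo}. Your write-up simply makes explicit the length bookkeeping and the appeal to \propref{prop:schubo} that the paper leaves implicit.
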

\begin{proof}
By~\eqnref{eq:partial1}--\eqnref{eq:partial2} we have 
   \begin{equation}  \label{eq:dsw}
	  \pbo_u (\schubo_w) = \begin{cases}
			\pm \schubo_{wu^{-1}}  & \text{ if } \ell(wu^{-1}) =\ell(w) -\ell(u) \\
			0 &\text{otherwise}.
		\end{cases}
	\end{equation}
  The lemma follows from \eqref{eq:dsw}.
\end{proof}


\begin{prop}
	\label{prop:linearrel}
	There are no linear relations among the images of ${\{\underline{x}^{\underline{r}}\, \pbo_w\}}_{w\in B_n,\underline{r}\in\N^n}$ or among those of ${\{\pbo_w\, \underline{x}^{\underline{r}}\}}_{w\in B_n,\underline{r}\in\N^n}$ in $\End(\tyodd{\Pol}_n)$. Thus these two sets form $\Z$-bases for $\bodd{\NH}_n$.
\end{prop}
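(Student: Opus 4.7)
\medskip

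\noindent\textbf{Proof proposal.} The plan is to run a triangularity argument in $\End(\tyodd{\Pol}_n)$ using the spin Schubert polynomials $\schubo_w$ as a separating family, and then transport the result to the second basis via the straightening relations of $\bodd{\NH}_n$. Linear independence in $\End(\tyodd{\Pol}_n)$ automatically entails linear independence in $\bodd{\NH}_n$, and the fact that these elements span $\bodd{\NH}_n$ is already known from the PBW basis theorem for $\mathfrak{H}_{B_n}^-$ recorded in \S\ref{subsec:HnH}, so one concludes that both sets are $\Z$-bases of $\bodd{\NH}_n$.

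For the set $\{\underline{x}^{\underline{r}}\,\pbo_w\}$, suppose we have a finite relation
\begin{equation*}
\sum_{w\in B_n,\;\underline{r}\in\N^n} c_{w,\underline{r}}\,\underline{x}^{\underline{r}}\,\pbo_w \;=\; 0 \quad\text{in } \End(\tyodd{\Pol}_n).
\end{equation*}
Let $v \in B_n$ be an element of \emph{minimal} length such that $c_{v,\underline{r}} \neq 0$ for some $\underline{r}$. Evaluate the operator on $\schubo_v$. By \lemref{lem:schubertlen}, every summand with $\ell(u) > \ell(v)$ contributes $0$, and every summand with $\ell(u) = \ell(v)$ but $u \neq v$ also contributes $0$; the summands with $\ell(u) < \ell(v)$ have already been arranged to have zero coefficient by the minimality of $v$. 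What remains is
\begin{equation*}
\pm \sum_{\underline{r}} c_{v,\underline{r}}\,\underline{x}^{\underline{r}} \;=\; 0 \quad\text{in } \tyodd{\Pol}_n,
\end{equation*}
and since the monomials $\underline{x}^{\underline{r}}$ are linearly independent in the skew-polynomial algebra $\tyodd{\Pol}_n$, all $c_{v,\underline{r}}$ vanish, contradicting the choice of $v$. This establishes linear independence of $\{\underline{x}^{\underline{r}}\,\pbo_w\}$ in $\End(\tyodd{\Pol}_n)$.

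For the second family $\{\pbo_w\,\underline{x}^{\underline{r}}\}$, I would use the defining relations \eqref{eq:arel.4}--\eqref{eq:arel.5} and \eqref{eq:brel.4}--\eqref{eq:brel.5} to straighten each $\pbo_w\,\underline{x}^{\underline{r}}$ into a $\Z$-linear combination of terms $\underline{x}^{\underline{s}}\,\pbo_{u}$. Each elementary move either anti-commutes an $x_j$ past a generator $\pso_i$ or $\pbo_n$ (preserving $w$ and $\underline{r}$ up to sign), or consumes a generator via the inhomogeneous relation $\pso_i x_{i+1}+x_i\pso_i=1$ or $\pbo_n x_n + x_n \pbo_n = 1$ (which strictly decreases the length of $w$). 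Tracking the leading term in the partial order given by the Bruhat length on $B_n$, one finds
\begin{equation*}
\pbo_w\,\underline{x}^{\underline{r}} \;=\; \pm\,\underline{x}^{\underline{r}}\,\pbo_w \;+\; \sum_{\ell(u) < \ell(w)} \underline{x}^{\underline{s}}\,\pbo_u \cdot (\text{integer}),
\end{equation*}
so the change-of-basis matrix from $\{\pbo_w\,\underline{x}^{\underline{r}}\}$ to $\{\underline{x}^{\underline{r}}\,\pbo_w\}$, read off length by length, is block unitriangular with $\pm 1$ on the diagonal; hence invertible over $\Z$. Linear independence of the first family then forces linear independence of the second.

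The conceptually delicate step is the minimality/triangularity argument in the first paragraph: it hinges entirely on the vanishing statement $\pbo_u(\schubo_w)=0$ for $\ell(w)<\ell(u)$ and the nondegeneracy $\pbo_v(\schubo_v) = \pm 1 = \pm \schubo_e$ in \lemref{lem:schubertlen}, which in turn relies on \propref{prop:schubo}. The main obstacle if one tried to avoid Schubert polynomials would be controlling simultaneously the many possible cancellations among $\pbo_u$'s of the same length; the Schubert-polynomial duality circumvents this by exhibiting explicit separators indexed by $B_n$. The second-family argument is essentially a bookkeeping exercise once the straightening lemma is in hand.
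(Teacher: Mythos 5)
Your proposal is correct and follows essentially the paper's own proof: the paper likewise notes both sets span $\bodd{\NH}_n$ by the defining relations and proves independence of $\{\underline{x}^{\underline{r}}\,\pbo_w\}$ exactly by your argument, evaluating a putative relation on $\schubo_w$ for $w$ of minimal length with nonzero coefficient and invoking \lemref{lem:schubertlen}. Your straightening of $\pbo_w\,\underline{x}^{\underline{r}}$ is just a more explicit version of the paper's remark that it suffices to treat one of the two sets; the only small correction is that the leading term is $\pm\,\underline{x}^{\underline{s}}\,\pbo_w$ with $\underline{s}$ the permutation of $\underline{r}$ by (the underlying permutation of) $w$, not $\underline{r}$ itself, which leaves the diagonal blocks signed permutation matrices and so does not affect invertibility of the change of basis over $\Z$.
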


\begin{proof}
 The proof here is fairely standard using ~\lemref{lem:schubertlen}. 
 
 Note these two sets are spanning sets for $\bodd{\NH}_n$ by the defining relations of $\bodd{\NH}_n$. It suffices to prove the linear independence of either of these two sets, and we choose to prove that ${\{\underline{x}^{\underline{r}}\, \pbo_w\}}_{w\in B_n,\underline{r}\in\N^n}$  is linearly independent. 
 
 Assume we have a nontrivial relation $S :=\sum_{u\in B_n,\underline{r}\in\N^n} c_{u, \underline{r}} \underline{x}^{\underline{r}}\, \pbo_u =0$ for some scalars $c_{u, \underline{r}}$, and $w$ is of minimal length such that $c_{w, \underline{r}'} \neq 0$ for some $\underline{r}'$. By ~\lemref{lem:schubertlen}, we have  $0= S (\schubo_w) =\sum_{\underline{r}} \pm c_{w, \underline{r}} \underline{x}^{\underline{r}}$, which is a contradiction.
\end{proof}

Recall the notion of $(q,\pi)$-double factorial $[2n]_{\pi}!!$ from  \eqref{eq:doublefactorial}. 
\begin{cor}
	\label{cor:bfaithful}
	The representation of the spin type B nilHecke algebra $\bodd{\NH}_n$ on $\tyodd{\Pol}_n$ is faithful. Moreover, we have the following graded rank formulas:
	\begin{align*}
		\rk_{q,\pi}(\bodd{\NC}_n) &= (\pi q)^{-n^2}[2n]_\pi !!, \\
		\rk_{q,\pi} (\bodd{\NH}_n) &= \frac{(\pi q)^{-n^2}[2n]_{\pi} !!}{{(1-\pi q^2)}^n}.
	\end{align*}
\end{cor}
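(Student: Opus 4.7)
The plan is to derive both claims from \propref{prop:linearrel}, which is the structural backbone of this section. Faithfulness is immediate: that proposition asserts simultaneously that $\{\underline{x}^{\underline{r}}\,\pbo_w\}_{w\in B_n,\,\underline{r}\in\N^n}$ is a $\Z$-basis of $\bodd{\NH}_n$ and that its image in $\End(\tyodd{\Pol}_n)$ is $\Z$-linearly independent, so the structure map $\bodd{\NH}_n \to \End(\tyodd{\Pol}_n)$ must be injective.

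For the graded rank of $\bodd{\NC}_n$, I would first fix a reduced expression for each $w\in B_n$ and apply a standard Matsumoto-type reduction using \eqref{eq:arel.1}--\eqref{eq:arel.3} and \eqref{eq:brel.1}--\eqref{eq:brel.3} to show that $\{\pbo_w\}_{w\in B_n}$ spans $\bodd{\NC}_n$; linear independence is the $\underline{r}=0$ specialization of \propref{prop:linearrel}. Since each $\pso_i$ and $\pbo_n$ has $q$-degree $-2$ and parity $1$, we have $|\pbo_w|=-2\ell(w)$ and $p(\pbo_w)\equiv\ell(w)\pmod{2}$, giving
\[
   \rk_{q,\pi}(\bodd{\NC}_n) = \sum_{w\in B_n} (\pi q^{-2})^{\ell(w)}.
\]
Substituting $t=\pi q^{-2}$ into the classical Poincar\'e polynomial $\sum_{w\in B_n} t^{\ell(w)} = \prod_{i=1}^n(1-t^{2i})/(1-t)$ and using $\pi^2=1$ reduces this to $\prod_{i=1}^n(1-q^{-4i})/(1-\pi q^{-2})$. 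To match against $(\pi q)^{-n^2}[2n]_\pi!!$, I would use the identity $\pi q - q^{-1} = \pi q(1-\pi q^{-2})$ to clear the denominator in $[2i]_\pi = (q^{2i}-q^{-2i})/(\pi q - q^{-1})$, then collect $\prod_i(q^{2i}-q^{-2i}) = q^{n(n+1)}\prod_i(1-q^{-4i})$ and observe that $\pi^{-n(n+1)}=1$.

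For $\bodd{\NH}_n$ itself, the basis in \propref{prop:linearrel} factors the graded rank as $\rk_{q,\pi}(\bodd{\NH}_n) = \rk_{q,\pi}(\tyodd{\Pol}_n)\cdot\rk_{q,\pi}(\bodd{\NC}_n)$. Since the ordered monomials $\{\underline{x}^{\underline{r}}\}_{\underline{r}\in\N^n}$ form a basis of $\tyodd{\Pol}_n$ and each $x_i$ contributes the geometric series $\sum_{r\ge 0}(\pi q^2)^r$, one obtains $\rk_{q,\pi}(\tyodd{\Pol}_n) = (1-\pi q^2)^{-n}$, from which the stated formula follows. I do not foresee any serious obstacle; the only delicate step is bookkeeping the powers of $\pi$ in the final Poincar\'e polynomial manipulation, where the reduction $\pi^{2k}=1$ must be applied repeatedly to confirm that $\pi^{-n(n+1)}=1$.
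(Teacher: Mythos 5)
Your proposal is correct and follows essentially the same route as the paper: faithfulness is deduced from Proposition~\ref{prop:linearrel}, the rank of $\bodd{\NC}_n$ is computed as $\sum_{w\in B_n}\pi^{\ell(w)}q^{-2\ell(w)}$ via the length generating function of $B_n$, and the rank of $\bodd{\NH}_n$ follows by multiplying by $\rk_{q,\pi}(\tyodd{\Pol}_n)=(1-\pi q^2)^{-n}$. The only difference is that you spell out the Poincar\'e-polynomial manipulation identifying the sum with $(\pi q)^{-n^2}[2n]_\pi!!$, which the paper leaves implicit; your bookkeeping (including $\pi^{-n(n+1)}=1$) checks out.
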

\begin{proof}
The faithfulness is a simple consequence of \propref{prop:linearrel}.

The first graded rank formula follows from the definition of the $\Z$-grading 
	\begin{equation*}
		\rk_{q,\pi}(\bodd{\NC}_n) = \sum_{w\in B_n} \pi^{\ell(w)} q^{-2\ell(w)} = (\pi q)^{-n^2}[2n]_\pi !!.
	\end{equation*}
	The second formula follows from the above and that $ \rk_{q,\pi}(\Pol^-_n) =\frac{1}{{(1-\pi q^2)}^n}.$
 \end{proof}
The above formula can be compared with the graded rank formula for the nilHecke algebra of type A \cite[(5.16)]{HW15}:
   \begin{align*}
		\rk_{q,\pi} (\aodd{\NH}_n) &= \frac{(q\pi)^{-{n \choose 2}}[n]_{\pi}!}{{(1-\pi q^2)}^n}.
	\end{align*}
\subsection{Spin type D Schubert polynomials}

Let $n\ge 2$. 
Let $D_n=\langle s_1, \ldots, s_{n-1}, \sdn_n \rangle$ denote the Weyl group of type D. Sometimes we write $s_n =\sdn_n$. 
For $w \in D_n$,  we choose a reduced expression $w = s_{i_1}\dotsm s_{i_{\ell}}$ in terms of simple transpositions and define $\pdo_w = \pdo_{i_1}\dotsm\pdo_{i_{\ell}}$.
We  consider the following reduced expression of the longest word $\dodd{w}_n$ of $D_n$:
\begin{equation}
	\begin{split}
	  w_n=  \dodd{w}_n &= s_{1..(n-2) n..1} \cdot s_{2..(n-2) n..2} \cdot \ldots \cdot (s_{n-2}\sdn_n).
	\end{split}
\end{equation}
Set
	\begin{equation}
	 \label{eq:deltad}
		\deltad_n = (2n-2,2n-4,\dots,2,0),
		\qquad 
		\underline{x}^{\deltad_n} =x_1^{2n-2}x_2^{2n-4} \ldots  x_{n-1}^2.
	\end{equation}
For $w \in D_n$, we define the spin type D Schubert polynomials
	\begin{equation}
		\schudo_w(x_1,\dots,x_n) = \pdo_{w^{-1}w_n}(\underline{x}^{\deltad_n}).
	\end{equation}
As in type B, we have
\begin{equation*}
	\pdo_w\pdo_u = \begin{cases}
		\pm\pdo_{wu} & \ell(wu)=\ell(w)+\ell(u) \\
		0 &\text{otherwise},
	\end{cases}
\end{equation*}
and
\begin{equation}
\label{eq:Dss}
	\pdo_u (\schudo_w) = \begin{cases}
		\pm \schudo_{wu^{-1}} & \ell(wu^{-1})=\ell(w)-\ell(u) \\
		0 &\text{otherwise}.
	\end{cases}
\end{equation}

The following lemma is a generalization of~\lemref{lem:pdoevenpowers}.

\begin{lem}
	\label{lem:pdo}
	For any $k\geq1$, we have
	\begin{align*}
		\pdo_n(x_n^k) &= -\sum_{j=1}^k x_{n-1}^{j-1} x_n^{k-j},
\\
		\pdo_n(x_{n-1}^k) &= \sum_{j=1}^k x_n^{j-1} x_{n-1}^{k-j}.
	\end{align*}
\end{lem}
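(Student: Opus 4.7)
The plan is to prove both identities simultaneously by induction on $k$, using only the defining action of $\pdo_n$ on the generators $x_{n-1}, x_n$ and the Leibniz rule \eqref{Demazure:D}, together with the fact that $\sdo_n(x_{n-1}) = x_n$ and $\sdo_n(x_n) = x_{n-1}$.

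The base case $k=1$ is immediate from the definition~\eqref{Demazure:D}: $\pdo_n(x_n) = -1$ and $\pdo_n(x_{n-1}) = 1$, which match the right-hand sides of both formulas for $k=1$ (single-term empty products $x_{n-1}^0 x_n^0$ and $x_n^0 x_{n-1}^0$). For the inductive step, I would write $x_n^k = x_n \cdot x_n^{k-1}$ and apply the Leibniz rule to get
\[
\pdo_n(x_n^k) = \pdo_n(x_n)\, x_n^{k-1} + \sdo_n(x_n)\, \pdo_n(x_n^{k-1}) = -x_n^{k-1} + x_{n-1}\, \pdo_n(x_n^{k-1}).
\]
By the inductive hypothesis, $\pdo_n(x_n^{k-1}) = -\sum_{j=1}^{k-1} x_{n-1}^{j-1} x_n^{k-1-j}$. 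Substituting and reindexing ($j \mapsto j-1$ in the resulting sum) yields the claimed expression $-\sum_{j=1}^{k} x_{n-1}^{j-1} x_n^{k-j}$.

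The argument for $\pdo_n(x_{n-1}^k)$ is completely parallel: use $x_{n-1}^k = x_{n-1} \cdot x_{n-1}^{k-1}$, apply the Leibniz rule to obtain
\[
\pdo_n(x_{n-1}^k) = x_{n-1}^{k-1} + x_n\, \pdo_n(x_{n-1}^{k-1}),
\]
invoke induction, and reindex. The only subtlety is a sign bookkeeping issue: one must be careful that multiplying $x_{n-1}$ (resp.\ $x_n$) through the inductive sum does not introduce extra anticommutation signs beyond what is already tracked, but since we are only multiplying on the left by a single variable and the sums involve powers of $x_{n-1}$ and $x_n$ in fixed positions, no sign issues arise. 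This should be the entirety of the proof; I do not anticipate any genuine obstacle, as the calculation mirrors the type A computation of \lemref{lem:pso} almost verbatim.
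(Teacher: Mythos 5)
Your proof is correct and is essentially the paper's own argument: the paper's proof of Lemma~\ref{lem:pdo} is precisely "induction on $k$ and the Leibniz rule," and your base case, Leibniz-rule expansion with $\sdo_n(x_n)=x_{n-1}$, $\sdo_n(x_{n-1})=x_n$, and reindexing all check out. Your remark that no anticommutation signs arise (since the single variable multiplies directly against the like-variable power at the left end of each summand) is accurate.
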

\begin{proof}
It follows by an induction on $k$ and the Leibniz rule. 
\end{proof}

\begin{prop}
	\label{prop:schudo}
	We have $\schudo_e = 2^{n-1}$.
\end{prop}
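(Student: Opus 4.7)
The plan is to prove Proposition~\ref{prop:schudo} by induction on $n \geq 2$, paralleling the argument in Proposition~\ref{prop:schubo}. For the base case $n = 2$, a direct computation suffices: since $x_1 = x_{n-1}$ when $n = 2$, we have $\pdo_2(x_1) = 1$, and the Leibniz rule gives $\pdo_2(x_1^2) = \pdo_2(x_1)x_1 + \sdo_n(x_1)\pdo_2(x_1) = x_1 + x_2$, hence $\schudo_e = \pso_1(x_1 + x_2) = 2 = 2^{n-1}$.

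For the inductive step, I would factor the chosen reduced expression as $w_n = g_1 \cdot w'$, where $g_1 = s_{1..(n-2)n..1}$ is the first factor (of length $2(n-1)$) and $w' = s_{2..(n-2)n..2} \cdot \ldots \cdot (s_{n-2}\sdn_n)$ is the product of the remaining factors. Writing $\underline{x}^{\deltad_n} = x_1^{2n-2} \cdot f$ with $f = x_2^{2n-4} x_3^{2n-6} \cdots x_{n-1}^2$ involving only $x_2, \ldots, x_n$, the relations \eqref{eq:arel.5} and \eqref{eq:drel.7} imply that each generator appearing in $\pdo_{w'}$ (namely $\pso_j$ for $j \geq 2$ and $\pdo_n$) anticommutes with $x_1$, so pulling the even power $x_1^{2n-2}$ through yields a total sign of $(-1)^{(2n-2)\ell(w')} = 1$. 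Thus $\pdo_{w'}(x_1^{2n-2} f) = x_1^{2n-2}\, \pdo_{w'}(f)$. Now $w'$ is, after shifting indices by one, the element expressed by the analogous formula for the longest word of $D_{n-1}$ acting on the shifted parabolic $\langle s_2, \ldots, s_{n-1}, \sdn_n \rangle$, and $f$ corresponds to $\underline{x}^{\deltad_{n-1}}$ under this shift. The inductive hypothesis therefore gives $\pdo_{w'}(f) = 2^{n-2}$, whence
$$\schudo_e = \pdo_{g_1}\bigl(2^{n-2} x_1^{2n-2}\bigr) = 2^{n-2}\, \pdo_{g_1}(x_1^{2n-2}).$$
The proof reduces to establishing the key identity $\pdo_{g_1}(x_1^{2n-2}) = 2$.

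The main obstacle is this final identity, which I would verify by a direct telescoping Leibniz computation analogous to the one completing the proof of Proposition~\ref{prop:schubo}. Reading $\pdo_{g_1}$ from right to left, the initial $\pso_i$'s progressively expand $x_1^{2n-2}$ via Lemma~\ref{lem:pso} into a cascade of monomial sums, most of whose terms are killed by subsequent operators on degree or support grounds. The crucial step is the application of $\pdo_n$ together with an adjacent $\pso_{n-1}$ to a polynomial in $x_{n-1}$: by Lemma~\ref{lem:pdo}, $\pdo_n(x_{n-1}^k)$ is a sum with contributions in both $x_{n-1}$ and $x_n$, and combining with $\pso_{n-1}$ produces a factor of $2$. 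This contrasts sharply with the type B situation, where $\pbo_n$ acts nontrivially only on $x_n$ (yielding $\pm 1$ in the analogous computation); the doubling phenomenon reflects the branching structure of the type D Dynkin diagram at node $n-2$. The trailing outer operators $\pso_{n-2}\cdots\pso_1$ then collapse the symmetric remainder to the scalar $2$, and careful sign bookkeeping, aided by the type A result $\aodd{\mf{s}}_e = \pm 1$ for intermediate simplifications, confirms that the final answer is exactly $+2$, completing the induction.
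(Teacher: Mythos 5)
Your reduction coincides exactly with the paper's: the same induction, the same base case, the same factorization of the chosen reduced word as $g_1\cdot w'_{n-1}$ with $g_1=s_{1..(n-2)n..1}$, pulling the even power $x_1^{2n-2}$ through $\pdo_{w'_{n-1}}$ with trivial sign, and invoking the inductive hypothesis to arrive at $\schudo_e = 2^{n-2}\,\pdo_{g_1}(x_1^{2n-2})$. Up to that point the argument is correct.

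The gap is that you never prove the identity $\pdo_{g_1}(x_1^{2n-2})=2$, and that identity is the entire content of the proposition: what you have shown is only $\schudo_e=2^{n-2}c_n$ for some constant $c_n$, and pinning $c_n$ down to exactly $+2$ (rather than $0$, $\pm 1$, or some other value) is where essentially all of the paper's proof takes place. Your sketch of how you would do it also misdescribes the mechanism. In the word $g_1=s_1\cdots s_{n-2}\,\sdn_n\,s_{n-1}s_{n-2}\cdots s_1$, the operator $\pso_{n-1}$ lies to the \emph{right} of $\pdo_n$: by the time $\pdo_n$ acts it has already been expanded (producing the summation index $j_{n-1}$ via \lemref{lem:pso}), and the operators still waiting to act are only $\pso_1\cdots\pso_{n-2}$, so there is no $\pso_{n-1}$ left to ``combine with.'' The factor $2$ arises inside the single application of $\pdo_n$: modulo monomials involving $x_n$ (killed by $\pso_1\cdots\pso_{n-2}$ for degree reasons), only the boundary terms $j_{n-1}=1$ and $j_{n-1}=j_{n-2}-1$ survive, and by \lemref{lem:pdo} they contribute $\pm x_{n-1}^{j_{n-2}-3}$ with signs that cancel when $j_{n-2}$ is even and reinforce, with net coefficient $-2$, when $j_{n-2}$ is odd; one must then still run $n-2$ further rounds of the type~B-style reduction from \propref{prop:schubo} and track the residual sign $(-1)^{2n-3}$ to conclude the answer is $+2$. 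None of this bookkeeping appears in your proposal---``careful sign bookkeeping confirms $+2$'' is an assertion, not an argument---so the proof is incomplete precisely at its crux.
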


\begin{proof} 
We proceed by induction on $n$.
	In the base case $n=2$, we have
	\begin{align*}
		\schudo_e &= \pdo_{w_2}(\underline{x}^{\deltad_2}) 
		= \pso_1\pdo_2(x_1^2)  
		= \pso_1(x_1 + x_2) 
		= 2.
	\end{align*}

	For $n>2$, using inductive assumption we have
	\begin{align*}
		\schudo_e = \pdo_{w_n}(\underline{x}^{\deltad_n}) 
		&= \pso_{1..(n-2)n..1}  \pdo_{w'_{n-1}} \Big(x_1^{2n-2}\underline{x}^{\dodd{\delta'}_{n-1}} \Big) \\
		&= \pso_{1..(n-2)n..1}  (x_1^{2n-2}\schudo'_e) \\
		&= 2^{n-2} \pso_{1..(n-2)n..1}  (x_1^{2n-2}).
	\end{align*}
  Again, we use~\lemref{lem:pso} and a similar trick as in the proof of~\propref{prop:schubo} to simplify the sums involved:
	\begin{align*}
		\schudo_e 
		&= 2^{n-2}\pso_{1..(n-2)n..2}  \left( \sum_{j_1=2n-3}^{2n-2} (-1)^{j_1-1}x_2^{j_1-1}x_1^{2n-j_1-2} \right)
		 \\
		&= 2^{n-2} \pso_{1..(n-2)n..3}  \left( \sum_{j_1=2n-3}^{2n-2} \sum_{j_2=2n-5}^{j_1-1} (-1)^{j_1+j_2-2} x_3^{j_2-1}x_2^{j_1-j_2-1}x_1^{2n-j_1-2} \right) \\
		&= 2^{n-2} \sum_{j_1=2n-3}^{2n-2} \sum_{j_2=2n-5}^{j_1-1} \dots \sum_{j_{n-1}=1}^{j_{n-2}-1} \\
		&\qquad\qquad \pso_{1..(n-2)}\pdo_n \left( (-1)^{j_1+\dots+j_{n-1}+n-1} x_n^{j_{n-1}-1}x_{n-1}^{j_{n-2}-j_{n-1}-1} \dotsm x_2^{j_1-j_2-1}x_1^{2n-j_1-2} \right).
	\end{align*}

	As in the proof of~\propref{prop:schubo}, we want to evaluate $\pdo_n(x_n^{j_{n-1}-1}x_{n-1}^{j_{n-2}-j_{n-1}-1})$, ignoring any resulting monomial terms involving $x_n$ which will be annihilated by $\pso_{1..(n-2)}$.
	Thus, if we have $j_{n-1}-1,j_{n-2}-j_{n-1}-1\neq0$, we can use~\lemref{lem:pdo} to compute
	\begin{align*}
		\pdo_n(x_n^{j_{n-1}-1}x_{n-1}^{j_{n-2}-j_{n-1}-1}) 
		&\equiv -x_{n-1}^{j_{n-1}-2}x_{n-1}^{j_{n-2}-j_{n-1}-1} + x_{n-1}^{j_{n-1}-1}x_{n-1}^{j_{n-2}-j_{n-1}-2} \\
		&= 0 \; (\text{modulo monomials involving } x_n),
	\end{align*}
	thus leaving only the terms with $j_{n-1}=1$ and $j_{n-1}=j_{n-2}-1$ (note that these cases are mutually exclusive).
	In the former case $j_{n-1}=1$, we have
	\begin{equation*}
		\pdo_n(x_{n-1}^{j_{n-2}-2}) = x_{n-1}^{j_{n-2}-3},
	\end{equation*}
	with a leading coefficient of
	$(-1)^{j_1+\dots+j_{n-1}+n-1} = (-1)^{j_1+\dots+j_{n-2}+n}$; in the latter case $j_{n-1}=j_{n-2}-1$, we have
	\begin{equation*}
		\pdo_n(x_n^{j_{n-2}-2}) = -x_{n-1}^{j_{n-2}-3},
	\end{equation*}
	with a leading coefficient of
   $(-1)^{j_1+\dots+j_{n-1}+n-1} = (-1)^{j_1+\dots+j_{n-3}+n}.$    Thus, these two distinct terms will cancel each other when $j_{n-2}$ is even, and will combine when $j_{n-2}$ is odd.
   This gives us  
  \begin{align*}
		\schudo_e &= 2^{n-2} \sum_{j_1=2n-3}^{2n-2} \sum_{j_2=2n-5}^{j_1-1} \dots \sum_{j_{n-3}=5}^{j_{n-4}-1} \sum_{\substack{j_{n-2}=3 \\ j_{n-2}\text{ odd}}}^{j_{n-3}-1} \\
		&\qquad\qquad \pso_1\dotsm\pso_{n-2} \left( -2(-1)^{j_1+\dots+j_{n-3}+n} x_{n-1}^{j_{n-2}-3}x_{n-2}^{j_{n-3}-j_{n-2}-1} \dotsm x_2^{j_1-j_2-1}x_1^{2n-j_1-2} \right).
	\end{align*}

	We can now apply a similar observation to the above using~\lemref{lem:pso} (which is effectively the same usage as in the proof of~\propref{prop:schubo}) to obtain
	\begin{align*}
		\schudo_e &= -2 \cdot 2^{n-2} \sum_{j_1=2n-3}^{2n-2} \sum_{j_2=2n-5}^{j_1-1} \dots \sum_{j_{n-4}=7}^{j_{n-5}-1} \sum_{\substack{j_{n-3}=5 \\ j_{n-3}\text{ odd}}}^{j_{n-4}-1} \\
		&\qquad\qquad \pso_1\dotsm\pso_{n-3} \left( (-1)^{j_1+\dots+j_{n-4}+n+1} x_{n-2}^{j_{n-3}-5}x_{n-3}^{j_{n-4}-j_{n-3}-1} \dotsm x_2^{j_1-j_2-1}x_1^{2n-j_1-2} \right) \\
		& 
		= -2\cdot 2^{n-2} (-1)^{2n-3}
		= 2^{n-1}.
	\end{align*}
The proposition is proved.
\end{proof}

The obvious type D counterparts of~\eqnref{eq:partial1}, \eqnref{eq:partial2}  and \eqref{eq:dsw} remain to be valid. Together with $\schudo_e = 2^{n-1}$ (see \propref{prop:schudo}), these imply the following type D counterpart of ~\lemref{lem:schubertlen}. 

\begin{lem}
	\label{lem:schubertlend}
	Let $w, u \in D_n$. If $\ell(w) < \ell(u)$, then $(\underline{x}^{\underline{r}}\pdo_u)(\schudo_w) = 0$. Moreover, if $\ell(w) = \ell(u)$, then
	\begin{equation*}
		(\underline{x}^{\underline{r}}\pdo_u)(\schudo_w) = \begin{cases}
			\pm 2^{n-1}   \underline{x}^{\underline{r}} & \text{ if } w=u  \\
			0 &\text{otherwise}.
		\end{cases}
	\end{equation*}
\end{lem}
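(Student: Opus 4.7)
The plan is to follow the same strategy as in the proof of \lemref{lem:schubertlen}, replacing the type B tools with their type D counterparts that are already in place. The two essential ingredients are the action formula \eqref{eq:Dss} for $\pdo_u$ on spin Schubert polynomials $\schudo_w$, and the value $\schudo_e = 2^{n-1}$ established in \propref{prop:schudo}. The only structural difference from the type B case is that $\schudo_e$ is no longer $\pm 1$ but rather $\pm 2^{n-1}$, which explains the constant that appears in the statement.

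First I would handle the case $\ell(w) < \ell(u)$. Applying \eqref{eq:Dss}, the displayed formula gives $\pdo_u(\schudo_w) = \pm \schudo_{wu^{-1}}$ only when $\ell(wu^{-1}) = \ell(w) - \ell(u)$; but the left-hand side is nonnegative while the right-hand side is strictly negative, so this case is impossible. Hence $\pdo_u(\schudo_w) = 0$, and multiplying by $\underline{x}^{\underline{r}}$ on the left preserves the zero.

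Next I would handle the case $\ell(w) = \ell(u)$. If $w \neq u$ then $wu^{-1} \neq e$, so $\ell(wu^{-1}) \geq 1 > 0 = \ell(w) - \ell(u)$, and \eqref{eq:Dss} again yields $\pdo_u(\schudo_w) = 0$. If instead $w = u$, then \eqref{eq:Dss} gives $\pdo_w(\schudo_w) = \pm \schudo_{e} = \pm 2^{n-1}$ using \propref{prop:schudo}. Multiplying by $\underline{x}^{\underline{r}}$ then yields $(\underline{x}^{\underline{r}}\pdo_u)(\schudo_w) = \pm 2^{n-1}\underline{x}^{\underline{r}}$, as desired.

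There is no real obstacle here: the lemma is a direct consequence of the action formula \eqref{eq:Dss} and the explicit evaluation of $\schudo_e$. The only subtlety worth flagging is tracking the scalar $2^{n-1}$ (in contrast to the unit $\pm 1$ in type B), which propagates into later applications of this lemma, for instance the type D PBW/linear independence argument analogous to \propref{prop:linearrel} and any matrix algebra identification, explaining why type D statements are naturally phrased over $\Q$ rather than $\Z$.
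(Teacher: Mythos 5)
Your argument is correct and is exactly the paper's (unwritten) proof: the paper derives the lemma directly from the type D counterparts of \eqref{eq:partial1}, \eqref{eq:partial2}, \eqref{eq:dsw} — i.e.\ \eqref{eq:Dss} — together with $\schudo_e=\pm 2^{n-1}$ from \propref{prop:schudo}, just as you do. One tiny caveat on your closing remark: the factor $2^{n-1}$ does not force $\Q$ in the PBW/linear independence statement (\propref{prop:linearreld} is still over $\Z$, since $2^{n-1}c_u=0$ forces $c_u=0$ in a torsion-free setting); it only becomes an obstruction in the freeness and matrix algebra results, which is why those are stated over $\Q$ (or conjecturally $\Z[\frac12]$).
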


\begin{prop}
	\label{prop:linearreld}
	There are no linear relations among the images of ${\{\underline{x}^{\underline{r}}\pdo_w\}}_{w\in D_n,\underline{r}\in\N^n}$ or among those of ${\{\pdo_w\underline{x}^{\underline{r}}\}}_{w\in D_n,\underline{r}\in\N^n}$ in $\End(\tyodd{\Pol}_n)$. Thus these two sets form $\Z$-bases for $\dodd{\NH}_n$.
\end{prop}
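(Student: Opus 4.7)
The plan is to mimic the proof of \propref{prop:linearrel}, substituting type $D$ data throughout. First, both sets span $\dodd{\NH}_n$: the defining relations \eqref{eq:arel.4}--\eqref{eq:arel.5}, \eqref{eq:drel.6}--\eqref{eq:drel.7} let one straighten any monomial in the generators into either of the two prescribed forms $\underline{x}^{\underline{r}} \pdo_w$ or $\pdo_w \underline{x}^{\underline{r}}$. A symmetric repetition of the argument below handles the second set, so it suffices to prove the linear independence of $\{\underline{x}^{\underline{r}} \pdo_w\}_{w\in D_n,\underline{r}\in\N^n}$ in $\End(\tyodd{\Pol}_n)$.

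Suppose $S = \sum_{u \in D_n,\, \underline{r} \in \N^n} c_{u, \underline{r}}\, \underline{x}^{\underline{r}} \pdo_u$ acts as zero on $\tyodd{\Pol}_n$ with some $c_{u, \underline{r}} \neq 0$, and pick $w \in D_n$ of minimal length such that $c_{w, \underline{r}'} \neq 0$ for some $\underline{r}'$. Evaluate $S$ at the spin type $D$ Schubert polynomial $\schudo_w$. By \lemref{lem:schubertlend}, every term with $\ell(u) > \ell(w)$ vanishes, and every term with $\ell(u) = \ell(w)$ vanishes except for $u = w$, which contributes $\pm 2^{n-1}\underline{x}^{\underline{r}}$; terms with $\ell(u) < \ell(w)$ contribute zero by minimality of $w$. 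We therefore obtain
$$0 \;=\; S(\schudo_w) \;=\; \pm\, 2^{n-1} \sum_{\underline{r}} c_{w, \underline{r}}\, \underline{x}^{\underline{r}}$$
in $\tyodd{\Pol}_n$. Since $2^{n-1}$ is a nonzero integer and the monomials $\underline{x}^{\underline{r}}$ are $\Z$-linearly independent in the skew-polynomial algebra $\tyodd{\Pol}_n$, we conclude $c_{w, \underline{r}} = 0$ for all $\underline{r}$, contradicting the choice of $w$.

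The only substantive difference from the type $B$ argument is the factor $2^{n-1}$ (in place of $\pm 1$) contributed by $\schudo_e$ in \propref{prop:schudo}. This is harmless for the present linear-independence statement over $\Z$, although it is precisely this factor which will later force certain type $D$ results (notably the matrix-algebra identification \thmref{thm:diso}) to be stated over $\Q$ rather than $\Z$. I expect no serious obstacle: the structural skeleton of the proof is identical to type $B$, and the evaluation formula needed for the minimal-length term is already packaged in \lemref{lem:schubertlend}.
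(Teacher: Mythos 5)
Your proposal is correct and follows essentially the same route as the paper, which simply declares the proof identical to that of \propref{prop:linearrel}: straighten to get spanning, then evaluate a putative relation on $\schudo_w$ for $w$ of minimal length with nonzero coefficient and invoke \lemref{lem:schubertlend}. Your observation that the extra factor $2^{n-1}$ from \propref{prop:schudo} is harmless for linear independence over $\Z$ (but is the source of the later passage to $\Q$) is exactly right.
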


\begin{proof}
The proof is identical to the one for~\propref{prop:linearrel}.
\end{proof}

\begin{cor}
	\label{cor:dfaithful}
	The action of the spin type D nilHecke algebra on $\tyodd{\Pol}_n$ is faithful. We have the following graded rank formulas:   
	\begin{align*}
		\rk_{q,\pi}(\dodd{\NC}_n) &= (\pi q)^{-n(n-1)}[n]_{\pi} [2n-2]_{\pi}!!, \\
		\rk_{q,\pi}(\dodd{\NH}_n) &= \frac{(\pi q)^{-n(n-1)}[n]_{\pi}[2n-2]_{\pi}!! }{{(1-\pi q^2)}^n}.
	\end{align*}
\end{cor}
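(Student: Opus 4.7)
The plan is to mirror almost verbatim the proof of \corref{cor:bfaithful}, since the type D ingredients are now all in place. The faithfulness assertion requires nothing beyond \propref{prop:linearreld}: that proposition already exhibits $\{\underline{x}^{\underline{r}}\pdo_w\}_{w\in D_n, \underline{r}\in\N^n}$ as a linearly independent subset of $\End(\tyodd{\Pol}_n)$, which is simultaneously a spanning set of $\dodd{\NH}_n$ by the defining relations; this forces both the PBW basis theorem for $\dodd{\NH}_n$ and the injectivity of the action on $\tyodd{\Pol}_n$.

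For the nilCoxeter rank, I would use that $\{\pdo_w\}_{w\in D_n}$ is a $\Z$-basis of $\dodd{\NC}_n$ (the $\pdo_w$ piece of the PBW basis), and that each $\pdo_w$ is homogeneous of $\Z$-degree $-2\ell(w)$ and parity $\ell(w)\bmod 2$ by the definition of the grading \eqref{eq:grading} and its type D analogue. Hence
\[
\rk_{q,\pi}(\dodd{\NC}_n)=\sum_{w\in D_n}\pi^{\ell(w)}q^{-2\ell(w)}=\sum_{w\in D_n}(\pi q^{-2})^{\ell(w)}.
\]
This is the Poincar\'e polynomial of $D_n$ evaluated at $t=\pi q^{-2}$. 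Applying the classical factorization
\[
\sum_{w\in D_n} t^{\ell(w)}=\frac{1-t^n}{1-t}\prod_{k=1}^{n-1}\frac{1-t^{2k}}{1-t}
\]
and substituting $t=\pi q^{-2}$, I would rearrange by pulling out the factor $(\pi q)^{-n(n-1)}$ corresponding to $\ell(w_n)=n(n-1)$, and identify the remaining terms with $[n]_\pi$ and the factors $[2k]_\pi=(q^{2k}-q^{-2k})/(\pi q-q^{-1})$ (here $\pi^{2k}=1$ collapses the parity on even-indexed pieces, which is exactly why the product is of the double-factorial form). This yields $(\pi q)^{-n(n-1)}[n]_\pi[2n-2]_\pi!!$ as claimed.

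For the full nilHecke rank, the PBW basis from \propref{prop:linearreld} gives a graded super vector space isomorphism $\dodd{\NH}_n\cong \tyodd{\Pol}_n\otimes\dodd{\NC}_n$, hence multiplicativity of $\rk_{q,\pi}$. It remains to compute $\rk_{q,\pi}(\tyodd{\Pol}_n)$; a monomial basis $\{x_1^{r_1}\cdots x_n^{r_n}\}_{r_i\ge 0}$ (which is a basis despite the anticommutation \eqref{eq:anti}) has each generator $x_i$ of degree $2$ and parity $1$, giving $\rk_{q,\pi}(\tyodd{\Pol}_n)=1/(1-\pi q^2)^n$, which multiplies the first formula into the second.

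I do not foresee any serious obstacle; the only bookkeeping to be careful about is the parity substitution $t=\pi q^{-2}$ in the $D_n$ Poincar\'e polynomial, where the odd factor $(1-t^n)/(1-t)$ (contributing $[n]_\pi$ with its $\pi$-sensitivity when $n$ is odd) must be separated from the even factors $(1-t^{2k})/(1-t)$ that assemble into $[2n-2]_\pi!!$. Once that algebraic identity is stated cleanly, both formulas follow in parallel to the type B case.
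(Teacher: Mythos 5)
Your proposal is correct and follows essentially the same route as the paper: faithfulness is deduced from \propref{prop:linearreld}, the nilCoxeter rank is the sum $\sum_{w\in D_n}\pi^{\ell(w)}q^{-2\ell(w)}$ identified with $(\pi q)^{-n(n-1)}[n]_\pi[2n-2]_\pi!!$, and the nilHecke rank follows by multiplying with $\rk_{q,\pi}(\tyodd{\Pol}_n)=1/(1-\pi q^2)^n$ via the PBW basis, exactly as in \corref{cor:bfaithful}. The only difference is that you spell out the Poincar\'e-polynomial factorization and the substitution $t=\pi q^{-2}$, which the paper leaves implicit; your bookkeeping there (including $\ell(w_n)=n(n-1)$ and the collapse $\pi^{2k}=1$) is accurate.
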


\begin{proof}
	We have the following identity:
	 \begin{equation*}
		\rk_{q,\pi}(\dodd{\NC}_n) = \sum_{w\in D_n} \pi^{\ell(w)} q^{-2\ell(w)} = (\pi q)^{-n(n-1)}[n]_{\pi} [2n-2]_{\pi}!!.
	\end{equation*}
		The rest of the proof is the same as for \corref{cor:bfaithful}.
\end{proof}

\section{Spin nilHecke algebras as matrix algebras}

In this section we show that $\Pol_n^-$ is a free $\bodd{\Lambda}_n$-module with a basis of spin Schubert polynomials, and then show that $\bodd{\NH}_n$ is a matrix algebra over $\bodd{\Lambda}_n$ of size $2^n n!$. We also show that after a base change to $\Q$, $\dodd{\NH}_{n,\Q}$ is a matrix algebra over $\dodd{\Lambda}_{n,\Q}$ of size $2^{n-1} n!$.
Finally, we show the spin nilHecke algebras of classical type  provide a categorification of a bialgebra module over the quantum covering algebra of rank one.

\subsection{The spin type B case}

Let
	\begin{equation}
		\begin{split}
			\mchbo_n &= \Span_{\Z}\{\underline{x}^{\underline{r}} \in \tyodd{\Pol}_n \mid \underline{r} \leq \delta_n\text{ termwise}\} \\
			&= \Span_{\Z}\{x_1^{r_1}\dotsm x_n^{r_n} \mid r_i \leq 2n-2i+1 \text{ for }1 \leq i \leq n\}.
		\end{split}
	\end{equation}

\begin{lem}
	\label{lem:mchbo}
	The spin type B Schubert polynomials ${\{\schubo_w\}}_{w\in B_n}$ form a $\Z$-basis for $\mchbo_n$.
\end{lem}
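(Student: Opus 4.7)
The strategy is to combine three facts: (i) $\mchbo_n$ is a free $\Z$-module of rank $|B_n|$; (ii) the Schubert polynomials $\{\schubo_w\}_{w \in B_n}$ are $\Z$-linearly independent; and (iii) each $\schubo_w$ lies in $\mchbo_n$. A matching-rank argument then forces $\{\schubo_w\}_{w \in B_n}$ to be a $\Z$-basis of $\mchbo_n$. The count in (i) is immediate from the definition: the defining monomials number $\prod_{i=1}^n (2n-2i+2) = 2^n \cdot n! = |B_n|$.

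For (ii), I would argue by contradiction using \lemref{lem:schubertlen}. Given a putative nontrivial relation $\sum_w c_w \schubo_w = 0$, pick $w_0$ of maximal length with $c_{w_0} \neq 0$ and apply the operator $\pbo_{w_0}$, specializing $\underline{r} = \underline{0}$ in \lemref{lem:schubertlen}. Terms with $\ell(w) < \ell(w_0)$ are killed by the first case of that lemma; terms with $\ell(w) = \ell(w_0)$ are killed unless $w = w_0$, in which case one obtains $\pm 1$; and terms of length greater than $\ell(w_0)$ do not appear by the maximality of $w_0$. Thus $\pm c_{w_0} = 0$, contradicting $c_{w_0} \neq 0$.

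The substantive step is (iii), which I would establish by induction on $\ell(w_n) - \ell(w) = \ell(w^{-1} w_n)$. The base case $w = w_n$ is immediate, since $\schubo_{w_n} = \underline{x}^{\deltab_n} \in \mchbo_n$. For the inductive step, choose a reduced expression $w^{-1} w_n = s_{i_1} v$ with $\ell(v) = \ell(w^{-1} w_n) - 1$, so that $\schubo_w = \pbo_{s_{i_1}}(\schubo_{w s_{i_1}})$ and $\schubo_{w s_{i_1}} \in \mchbo_n$ by the inductive hypothesis. The remaining task, and the main obstacle, is to verify that each Demazure operator $\pso_i$ ($1 \le i \le n-1$) and $\pbo_n$ maps $\mchbo_n$ into itself. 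This is checked by a direct computation on monomials $\underline{x}^{\underline{r}}$ with $r_j \leq 2n-2j+1$, using the Leibniz rule together with \lemref{lem:pso} (for $\pso_i(x_i^k)$ and $\pso_i(x_{i+1}^k)$) and \lemref{lem:pbo} (for $\pbo_n(x_n^k)$); the skew-symmetry among the $x_j$ produces cancellations that force the resulting terms to again satisfy the bounds defining $\mchbo_n$. This monomial-level bookkeeping, rather than the linear-independence or rank count, is the real content of the proof.
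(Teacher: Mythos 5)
Your counting (i) and the independence argument (ii) are fine and agree with the paper, but the two load-bearing steps of your plan both have genuine problems. First, the concluding "matching-rank argument" is invalid over $\Z$: a linearly independent family whose cardinality equals the rank of a free $\Z$-module need not be a basis of it (compare $\{2\}\subset\Z$), so (i)+(ii)+(iii) would only show that $\{\schubo_w\}$ is a $\Q$-basis of $\mchbo_n\otimes_\Z\Q$, i.e.\ a full-rank sublattice. The paper closes exactly this gap with an integrality argument that your proposal omits: given $f\in\mchbo_n$, write $f=\sum_w c_w\schubo_w$ with $c_w\in\Q$, choose $w$ of maximal length with $c_w\notin\Z$, and apply $\pbo_w$. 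Since $\pbo_w$ preserves integer coefficients, and by \lemref{lem:schubertlen} and \eqnref{eq:dsw} it annihilates the terms of length $\le\ell(w)$ other than $\schubo_w\mapsto\pm\schubo_e=\pm1$, while longer terms (whose coefficients are already integral by maximality) contribute only integral polynomials of positive degree, comparing constant terms forces $c_w\in\Z$, a contradiction. This is where \propref{prop:schubo} ($\schubo_e=\pm1$) is really used, and it is precisely what breaks in type D, where $\schudo_e=2^{n-1}$ and only a statement over $\Q$ survives; without this step the $\Z$-basis claim is not proved.

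Second, your proposed route to the containment (iii) rests on the claim that each $\pso_i$ and $\pbo_n$ maps $\mchbo_n$ into itself, and that claim is false. Already for $n=2$ one computes $\pso_1(x_1^3x_2)=x_1^2x_2+x_1x_2^2$, and the monomial $x_1x_2^2$ violates the bound $r_2\le 1$ defining $\mchbo_2$; the same output occurs for the even (non-spin) Demazure operator, so no skew-sign cancellation rescues the bookkeeping. The underlying reason is that consecutive entries of $\deltab_n$ differ by $2$ rather than $1$, so the type A staircase-stability argument does not transfer, and your induction on $\ell(w^{-1}w_n)$ cannot be run as stated. Note moreover that $x_1^2x_2+x_1x_2^2$ is, up to sign, $\schubo_{w_2 s_1}$ itself, so the membership of the Schubert polynomials in $\mchbo_n$ is genuinely delicate: the paper's own proof dismisses it as immediate from the definition, but your attempt and this example show that it requires either a real argument or an adjustment of the definition of $\mchbo_n$ (for instance, taking the $\Z$-span of the $\schubo_w$, as is done for $\mchdo_n$ in type D).
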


\begin{proof}
%
	It follows immediately from their definition that the spin type B Schubert polynomials are all contained in $\mchbo_n$.
	Both $\mchbo_n$ and the set of spin type B Schubert polynomials have $(2n)!! $ elements.
	Thus, if we have
	\begin{equation*}
		\sum_{w\in B_n} c_w\schubo_w(x) = 0
	\end{equation*}
	for some $c_w \in \Q$, we can apply the operators $\pbo_u$ as in the proof of~\propref{prop:linearrel} to deduce that all $c_w = 0$.
	In other words, we pick out a longest word $w$ such that $c_w \neq 0$  and apply $\pbo_{w^{-1}}$ to obtain (by~\lemref{lem:schubertlen}) that $\pm c_w = 0$, a contradiction.
	This proves linear independence over $\Q$.

	Now, if we have an expression $f = \sum_w c_w \schubo_w$ for $f \in \mchbo_n$, we similarly take a word $w$ of maximal length such that $c_w \in \Q\setminus\Z$ and apply $\pbo_{w}$ to get $\pm c_w = \pbo_{w} f$. But since $f$ has integral coefficients, so does $\pbo_{w^{-1}} f$. Thus we obtain $\pm c_w \in \Z$, which is a contradiction.
\end{proof}

If $R\subseteq S$ is a subring and $s\in S$, we write $R[s]$ for the subring of $S$ generated by $R$ and $s$.
This will allow us to formulate the following lemma, corresponding to~\cite[Corollary~2.6]{EKL14}. Recall 
${}^\mathfrak{b}\!{\Lambda}'^{\text{--}}_{n-1} \subset  {\Pol}'^{\text{--}}_{n-1}$ from \eqref{def:shiftedb}.

\begin{lem}
	\label{lem:bsymprime}
The following identity inside $\tyodd{\Pol}_n$ holds:
$\bodd{\Lambda}_n[x_1^2] = {}^\mathfrak{b}\!{\Lambda}'^{\text{--}}_{n-1}[x_1^2]$.
\end{lem}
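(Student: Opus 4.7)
The plan is to reduce the identity to a standard commutative-algebra fact about symmetric polynomials. First I would observe that because of the skew-commutation relations \eqref{eq:anti}, any monomial of the form $x_i^2$ is central in $\tyodd{\Pol}_n$: indeed, for $j\neq i$ we have $x_i^2 x_j = x_i(-x_jx_i)=x_jx_i^2$. Consequently the subalgebra $\Z[x_1^2,\ldots,x_n^2]\subseteq \tyodd{\Pol}_n$ is an honest commutative polynomial ring in $n$ algebraically independent variables, and the element $x_1^2$ lies in its center.

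Second, by \thmref{thm:bodd} applied to the two nested settings, we have the identifications
\begin{equation*}
\bodd{\Lambda}_n \;=\; \Z[x_1^2,\ldots,x_n^2]^{S_n},\qquad {}^\mathfrak{b}\!{\Lambda}'^{\text{--}}_{n-1} \;=\; \Z[x_2^2,\ldots,x_n^2]^{S_{n-1}},
\end{equation*}
where the symmetric groups permute the squared variables. Setting $y_i=x_i^2$ and using step~1, the asserted equality becomes the purely commutative identity
\begin{equation*}
\Z[y_1,\ldots,y_n]^{S_n}[y_1] \;=\; \Z[y_2,\ldots,y_n]^{S_{n-1}}[y_1]
\end{equation*}
inside $\Z[y_1,\ldots,y_n]$. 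Thus the entire question collapses to a standard statement about ordinary symmetric polynomials that does not involve any odd signs.

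Third, I would prove the two inclusions of this classical identity. For $(\subseteq)$, any $f\in\Z[y_1,\ldots,y_n]^{S_n}$ is in particular symmetric in $y_2,\ldots,y_n$, hence expanding in powers of $y_1$ gives $f=\sum_{k}y_1^k g_k(y_2,\ldots,y_n)$ with each $g_k\in\Z[y_2,\ldots,y_n]^{S_{n-1}}$. For $(\supseteq)$, it suffices to show that each elementary symmetric polynomial $e_k(y_2,\ldots,y_n)$ lies in $\Z[y_1,\ldots,y_n]^{S_n}[y_1]$; this follows by induction on $k$ from the well-known recursion
\begin{equation*}
e_k(y_2,\ldots,y_n) \;=\; e_k(y_1,\ldots,y_n)\;-\;y_1\,e_{k-1}(y_2,\ldots,y_n).
\end{equation*}

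The argument has no serious obstacle: the only point that requires care is the very first step, verifying that the skew-commutation relations do not interfere, so that we may legitimately transport the question into the commutative polynomial ring in the variables $y_i=x_i^2$. Once that is done, the remainder is the familiar inductive manipulation of elementary symmetric polynomials.
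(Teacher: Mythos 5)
Your proposal is correct and follows essentially the same route as the paper: both proofs invoke \thmref{thm:bodd} (and its shifted copy for ${}^\mathfrak{b}\!{\Lambda}'^{\text{--}}_{n-1}$) to reduce to ordinary symmetric polynomials in the commuting elements $x_i^2$, and then both inclusions come down to the recursion $e_k(y_2,\dots,y_n)=e_k(y_1,\dots,y_n)-y_1e_{k-1}(y_2,\dots,y_n)$ with $y_i=x_i^2$, which is exactly the paper's identity \eqref{eq:kk} in iterated form. The only minor difference is that for the inclusion $\bodd{\Lambda}_n[x_1^2]\subseteq{}^\mathfrak{b}\!{\Lambda}'^{\text{--}}_{n-1}[x_1^2]$ you expand an $S_n$-invariant in powers of $y_1$ with $S_{n-1}$-invariant coefficients instead of running the induction on elementary symmetric polynomials, which is a slightly more direct but equivalent argument.
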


\begin{proof}
Recall from Theorem~\ref{thm:bodd} that $\bodd{\Lambda}_n =\Z[\elembo_1, \elembo_2, \ldots, \elembo_n]$, and similarly we have ${}^\mathfrak{b}\!{\Lambda}'^{\text{--}}_{n-1} =\Z[\elemboprime_1, \ldots, \elemboprime_{n-1}]$.
One checks by definition that, for any $k\geq 0$,  
	\begin{equation}  \label{eq:kk}
		\elemboprime_k = \sum_{j=0}^k (-1)^j x_1^{2j} \, \elembo_{k-j}.
	\end{equation}
	It follows that $\bodd{\Lambda}_n[x_1^2] \supseteq {}^\mathfrak{b}\!{\Lambda}'^{\text{--}}_{n-1}[x_1^2]$. 
Rewrite \eqref{eq:kk} as 
\[
\elembo_k =\elemboprime_k- \sum_{j=1}^k (-1)^j x_1^{2j} \elembo_{k-j}.
\]
This implies by induction on $k$ that  $\elembo_k \in {}^\mathfrak{b}\!{\Lambda}'^{\text{--}}_{n-1}[x_1^2]$, and so  $\bodd{\Lambda}_n[x_1^2] \subseteq {}^\mathfrak{b}\!{\Lambda}'^{\text{--}}_{n-1}[x_1^2]$. 
\end{proof}

\begin{prop}
	\label{prop:bfreemodule}
As a left or right $\bodd{\Lambda}_n$-module,  $\tyodd{\Pol}_n$ is a free of graded rank $q^{n^2}[2n]!!$, with a homogeneous basis given by the spin type B Schubert polynomials ${\{\schubo_w\}}_{w\in B_n}$.
\end{prop}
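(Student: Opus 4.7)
The key structural observation is that $\bodd{\Lambda}_n = \Z[x_1^2, \ldots, x_n^2]^{S_n}$ lies in the center of $\tyodd{\Pol}_n$, since each $x_i^2$ commutes with every $x_j$ (from $x_j x_i^2 = -x_i x_j x_i = x_i^2 x_j$ for $j \neq i$). Consequently the left and right $\bodd{\Lambda}_n$-module structures on $\tyodd{\Pol}_n$ coincide, so it suffices to treat the left-module case.

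The plan is to factor the $\bodd{\Lambda}_n$-action through the central commutative polynomial subring $\Z[x_1^2, \ldots, x_n^2]$ and then invoke the classical integral Chevalley theorem for ordinary symmetric functions. First I would show that $\tyodd{\Pol}_n$ is free of rank $2^n$ over $\Z[x_1^2, \ldots, x_n^2]$ with basis the $2^n$ ``exterior'' monomials $\{x_1^{\epsilon_1} \cdots x_n^{\epsilon_n} : \epsilon_i \in \{0,1\}\}$. This comes from writing any ordered monomial $x_1^{a_1} \cdots x_n^{a_n}$ (which gives a standard $\Z$-basis of $\tyodd{\Pol}_n$) via $a_i = 2b_i + \epsilon_i$ and sliding the central factor $x_1^{2b_1} \cdots x_n^{2b_n}$ to the left without any sign corrections. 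Next, the ordered-monomial $\Z$-basis witnesses that $x_1^2, \ldots, x_n^2$ are algebraically independent, so $\Z[x_1^2, \ldots, x_n^2]$ is an honest polynomial ring in $n$ commutative variables; the classical integral Chevalley theorem (as already applied in the non-spin setting via \eqref{eq:invB} and the Appendix) then makes it a free $\bodd{\Lambda}_n$-module of rank $n!$ with basis the staircase $\{x_1^{2r_1} \cdots x_n^{2r_n} : 0 \leq r_i \leq n-i\}$.

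Combining these two freeness statements produces a $\bodd{\Lambda}_n$-basis of $\tyodd{\Pol}_n$ consisting of the monomials $\underline{x}^{\underline{r}}$ with $0 \leq r_i \leq 2n-2i+1$, which is exactly a $\Z$-basis of $\mchbo_n$. Since Lemma~\ref{lem:mchbo} already furnishes the alternative $\Z$-basis $\{\schubo_w : w \in B_n\}$ of $\mchbo_n$, these spin Schubert polynomials automatically form a $\bodd{\Lambda}_n$-basis of $\tyodd{\Pol}_n$. The graded-rank formula then reduces to the degree identity $|\schubo_w| = 2\ell(w)$ (obtained from $|\underline{x}^{\deltab_n}| = 2n^2$ and $|\pbo_{w^{-1}w_n}| = -2(n^2 - \ell(w))$) combined with the Poincar\'e identity
\begin{equation*}
\sum_{w \in B_n} q^{2\ell(w)} = \prod_{k=1}^n \frac{1-q^{4k}}{1-q^2} = q^{n^2}[2n]!!,
\end{equation*}
where the last equality uses $\frac{1-q^{4k}}{1-q^2} = q^{2k-1}[2k]$. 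The only genuinely delicate step is verifying the sign-free central decomposition of Step~1; afterwards the classical Chevalley theorem and Lemma~\ref{lem:mchbo} do all the heavy lifting. An alternative inductive route via Lemma~\ref{lem:bsymprime} is possible but yields the same outcome.
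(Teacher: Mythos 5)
Your argument is correct, but it takes a genuinely different route from the paper. The paper proves surjectivity of the multiplication map $\bodd{\Lambda}_n\otimes\mchbo_n\to\tyodd{\Pol}_n$ by induction on $n$ using \lemref{lem:bsymprime} (the identity $\bodd{\Lambda}_n[x_1^2]={}^\mathfrak{b}\!{\Lambda}'^{\text{--}}_{n-1}[x_1^2]$), and injectivity by the Demazure-pairing argument of \lemref{lem:mchdo}, mirroring the type A proof in \cite{EKL14}; you instead exploit the type B--specific fact that $\bodd{\Lambda}_n\subset\Z[x_1^2,\dots,x_n^2]$ is \emph{central} in $\tyodd{\Pol}_n$, decompose $\tyodd{\Pol}_n$ as a free module of rank $2^n$ over $\Z[x_1^2,\dots,x_n^2]$ with the exterior monomials as basis (the sign-free splitting $x^{2b+\epsilon}=x^{2b}\cdot x^{\epsilon}$ is indeed valid since even powers are central), and then invoke the classical integral freeness of $\Z[y_1,\dots,y_n]$ over $\Z[y_1,\dots,y_n]^{S_n}$ with the staircase basis; the resulting monomial $\bodd{\Lambda}_n$-basis is exactly a $\Z$-basis of $\mchbo_n$, so the unimodular change of $\Z$-basis from \lemref{lem:mchbo} converts it into the Schubert basis, and your degree bookkeeping for the graded rank is correct. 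What your route buys: it is shorter and more structural, it makes the equality of the left and right module structures transparent, and it reduces everything to classical commutative invariant theory. What it costs: the integral staircase-basis freeness (Artin's theorem over $\Z$) is not actually contained in the Appendix you cite---\eqref{eq:invB} and \eqref{eq:invBD2} only record polynomiality of the invariants, and the freeness used in \thmref{thm:iso} is over $\Q$---so you should cite it as an external classical fact; moreover the argument is tied to the accident that all spin type B invariants have even degree in each variable and hence are central, so it does not port to type A (where $\aodd{\Lambda}_n$ is genuinely odd) nor to type D (where $\elemdo_n=x_1\cdots x_n$ is central only for odd $n$), whereas the paper's inductive method is the one that adapts to those settings (cf.\ \propref{prop:dfreemodule}).
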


\begin{proof} 
	The proof below imitates the type A proof for~\cite[Proposition~2.13]{EKL14}. We shall give the detail on the left module case.
It suffices to show that multiplication map $\bodd{\Lambda}_n \otimes \mchbo_n \to \tyodd{\Pol}_n$ is an isomorphism of abelian groups. 

	To that end, we fist claim that any $f\in\tyodd{\Pol}_n$ can be expressed in the form
	\begin{equation*}
		f = \sum_{k=1}^{2n-1}\sum_j \ell_{k,j}h_{k,j} x_1^k,
	\end{equation*}
	for $h_{k,j}\in\bodd{\mathcal{H'}}_{n-1}, \ell_{k,j}\in\bodd{\Lambda}_n$.
	We prove this by induction on $n$, with the base case $n=1$ being trivial.
	Given $f\in\tyodd{\Pol}_n$, we expand in powers of $x_1$:
$f = \sum_k x_1^k f_k$, 
	for $f_k \in {\Pol}'^{\text{--}}_{n-1}$, and then use the inductive hypothesis to write
	\begin{equation*}
		f = \sum_k \sum_{i=1}^{2n-3} \sum_j x_1^k \ell_{i,j,k} h_{i,j,k} x_2^i,
	\end{equation*}
	where $h_{i,j,k}\in{}^\mathfrak{b}\!{\mathcal H}''^{\text{--}}_{n-2}$, $\ell_{i,j,k}\in{}^\mathfrak{b}\!{\Lambda}'^{\text{--}}_{n-1}$, and $f_k\in{\Pol}'^{\text{--}}_{n-1}$ for all $i,j,k$.
	Since $h_{i,j,k}x_2^i\in {}^\mathfrak{b}\!{\mathcal H}'^{\text{--}}_{n-1}$ (after moving the $x_2$, at the expense of a sign change),    using~\lemref{lem:bsymprime} we can rewrite this expression as
	\begin{equation*}
		f = \sum_{k=1}^{2n-1} \sum_j x_1^k \ell_{k,j} h_{k,j},
	\end{equation*}
where $h_{k,j}\in{}^\mathfrak{b}\!{\mathcal H}'^{\text{--}}_{n-1}$ and $\ell_{k,j}\in\bodd{\Lambda}_n$.

	The above claim implies surjectivity of the multiplication map, with injectivity following from an identical argument as for \lemref{lem:mchdo} below.
	Finally the graded rank formula follows from the identity
	\begin{equation*}
		\sum_{w\in B_n} q^{\deg{\schubo_w}} = \sum_{w\in B_n} q^{2\ell(w)} = q^{n^2}[2n]!!.
	\end{equation*}
The proposition is proved.
\end{proof}

\begin{lem}
	\label{lem:factoring}
	For $1\leq i\leq n$, $g\in\bodd{\Lambda}_n$, and $f\in\tyodd{\Pol}_n$, we have 
	\begin{equation*}
		\pbo_i(fg) = \pbo_i(f)g.
	\end{equation*}
Hence the left action of $\bodd{\NH}_n$ and the right action of $\bodd{\Lambda}_n$ on $\tyodd{\Pol}_n$  commute. 
\end{lem}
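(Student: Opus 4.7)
The plan is immediate from the Leibniz rules \eqref{eq:leibnizb}--\eqref{eq:leibnizb:B} combined with the defining description \eqref{SF:B} of the ring of spin type B symmetric polynomials. First I would observe that the notation $\pbo_i$ in the statement means $\pso_i$ for $1 \le i \le n-1$ and $\pbo_n$ for $i=n$, so the Leibniz rule applied to $fg$ reads
\begin{equation*}
   \pbo_i(fg) \;=\; \pbo_i(f)\, g \;+\; \tau_i(f)\, \pbo_i(g),
\end{equation*}
where $\tau_i$ denotes $\sso_i$ for $i\le n-1$ and $\sbo_n$ for $i=n$.

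Next I would invoke the definition $\bodd{\Lambda}_n = \bigcap_{j=1}^{n-1}\ker(\pso_j)\cap \ker(\pbo_n)$ from \eqref{SF:B}: the hypothesis $g\in\bodd{\Lambda}_n$ forces $\pbo_i(g)=0$ for every $i$ in the stated range. Hence the second term $\tau_i(f)\pbo_i(g)$ vanishes, and we obtain $\pbo_i(fg)=\pbo_i(f)\,g$ as desired. No parity subtleties intervene here because the troublesome sign factor $\tau_i(f)$ is multiplied by zero.

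For the second assertion, I would reduce the claim to checking it on a generating set of $\bodd{\NH}_n$. The right action of $g\in\bodd{\Lambda}_n$ commutes with left multiplication by any $x_j$ by associativity of the product in $\tyodd{\Pol}_n$, and it commutes with the Demazure operators $\pso_1,\dots,\pso_{n-1},\pbo_n$ by the identity just established. Since these elements generate $\bodd{\NH}_n$ by Definition~\ref{def:oddtypebnh}, the two actions commute on the nose.

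There is no genuine obstacle: the entire argument is a one-line application of the Leibniz rule together with the defining annihilation property of $\bodd{\Lambda}_n$. The only point worth flagging in the write-up is to make explicit that the Leibniz rule is the \emph{odd} one from \eqref{eq:leibnizb}--\eqref{eq:leibnizb:B}, so that the sign-twisted reflection $\tau_i(f)$ is harmless precisely because it is paired with $\pbo_i(g)=0$.
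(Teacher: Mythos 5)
Your proof is correct and follows the same route as the paper: the paper's proof is exactly the odd Leibniz rules \eqref{eq:leibnizb}--\eqref{eq:leibnizb:B} together with $g\in\ker(\pbo_i)$, with the commutation of the two actions then being immediate on the generators of $\bodd{\NH}_n$. Nothing to add.
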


\begin{proof}
	This follows by~\eqnref{eq:leibnizb}--\eqnref{eq:leibnizb:B} and the fact that $g\in\ker (\pbo_i)$.
\end{proof}

Finally, we arrive at the main structure result for $\bodd{\NH}_n$. 
\begin{thm}
	\label{thm:biso}
We have the following $\Z$-algebra isomorphisms: 
	\begin{equation*}
	   \bodd{\NH}_n \xrightarrow{\cong} \End_{\bodd{\Lambda}_n}(\tyodd{\Pol}_n) \cong \Mat_{q^{n^2}[2n]!!}(\bodd{\Lambda}_n).
	\end{equation*}
\end{thm}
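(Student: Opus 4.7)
My plan is to prove the two isomorphisms in turn. The second isomorphism $\End_{\bodd{\Lambda}_n}(\tyodd{\Pol}_n) \cong \Mat_{q^{n^2}[2n]!!}(\bodd{\Lambda}_n)$ is immediate from \propref{prop:bfreemodule}: the graded endomorphisms of a free graded module of graded rank $q^{n^2}[2n]!!$ over the (graded-)commutative ring $\bodd{\Lambda}_n$ form precisely the claimed graded matrix algebra, with matrix presentation coming from the Schubert basis $\{\schubo_w\}_{w \in B_n}$.

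For the first isomorphism, I would construct the natural map $\iota: \bodd{\NH}_n \to \End_{\bodd{\Lambda}_n}(\tyodd{\Pol}_n)$ arising from the polynomial representation. It is well-defined by \lemref{lem:factoring} (the left $\bodd{\NH}_n$-action commutes with right $\bodd{\Lambda}_n$-multiplication) and injective by \corref{cor:bfaithful}. Centrality of $\bodd{\Lambda}_n$ in $\bodd{\NH}_n$ (\corref{cor:bcenter}) makes $\iota$ a morphism of left $\bodd{\Lambda}_n$-modules. Combining \propref{prop:linearrel} with \propref{prop:bfreemodule}, one sees that $\bodd{\NH}_n$ itself is a free left $\bodd{\Lambda}_n$-module with basis $\{\schubo_v\, \pbo_w\}_{v, w \in B_n}$, of the same $\bodd{\Lambda}_n$-rank $|B_n|^2$ as the matrix-unit basis $\{E_{v, u}\}_{v, u \in B_n}$ on the codomain, where $E_{v, u}$ sends $\schubo_u \mapsto \schubo_v$ and annihilates the other Schubert basis vectors.

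Surjectivity of $\iota$ then reduces to the invertibility of its matrix in these two bases. A direct computation using \lemref{lem:schubertlen} together with $\schubo_e = \pm 1$ (\propref{prop:schubo}) shows that $\iota(\schubo_v\, \pbo_w)(\schubo_u)$ equals $\pm \schubo_v$ when $u = w$, vanishes whenever $\ell(u) \le \ell(w)$ and $u \neq w$, and is otherwise a $\bodd{\Lambda}_n$-linear combination of Schubert polynomials that can only appear when $\ell(u) > \ell(w)$. Ordering the index pairs $(v, w)$ and $(u', u)$ by decreasing $\ell(w)$ and $\ell(u)$ respectively, this matrix is block upper-triangular with diagonal blocks equal to signed identity matrices, hence has determinant $\pm 1 \in \bodd{\Lambda}_n^{\times}$. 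Therefore $\iota$ is a $\bodd{\Lambda}_n$-module isomorphism, and hence a $\Z$-algebra isomorphism.

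The main obstacle I anticipate is giving a fully rigorous treatment of the ordering step, in particular the vanishing when $\ell(u) = \ell(w)$ with $u \neq w$: this requires the precise length-additivity condition hidden in \lemref{lem:schubertlen}, which forces $u = w$ whenever $\ell(u) = \ell(w)$ and $\pbo_w(\schubo_u) \neq 0$. The above-diagonal entries, though given by potentially intricate Schubert-basis expansions of the skew-polynomial products $\schubo_v \cdot \schubo_{u w^{-1}}$, play no role in the determinant calculation and so require no explicit description.
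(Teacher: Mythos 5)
Your argument is correct, and its first half is exactly the paper's: the same homomorphism built from \lemref{lem:factoring}, with injectivity coming from faithfulness (\corref{cor:bfaithful}) and the identification $\End_{\bodd{\Lambda}_n}(\tyodd{\Pol}_n)\cong\Mat_{|B_n|}(\bodd{\Lambda}_n)$ from the Schubert basis of \propref{prop:bfreemodule}. Where you genuinely diverge is surjectivity: the paper disposes of it in one line by comparing graded ranks of $\bodd{\NH}_n$ and $\End_{\bodd{\Lambda}_n}(\tyodd{\Pol}_n)$ (via \corref{cor:bsym2}, \corref{cor:bfaithful} and \propref{prop:bfreemodule}), whereas you extract from \propref{prop:linearrel} and \propref{prop:bfreemodule} the left $\bodd{\Lambda}_n$-basis $\{\schubo_v\,\pbo_w\}_{v,w\in B_n}$ of $\bodd{\NH}_n$ and show, using \eqnref{eq:partial2}, \lemref{lem:schubertlen} and $\schubo_e=\pm1$ (\propref{prop:schubo}), that the matrix of $\iota$ against the matrix units is block upper triangular with $\pm1$ diagonal, hence has unit determinant over the commutative ring $\bodd{\Lambda}_n$. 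Your route is longer but buys a genuinely sharper conclusion over $\Z$: an injective degree-preserving map between free graded $\Z$-modules of equal graded rank need not be onto, so the paper's rank count is really a shortcut that is justified by precisely the sort of triangularity you make explicit; your argument is also structurally the same as the algebraic proof the paper gives in the appendix for general $W$ (\thmref{thm:iso}), where the key input is that $\partial_{w_0}$ applied to a suitable top-degree element is a nonzero constant, playing the role of your $\schubo_e=\pm1$. The "obstacle" you flag about the equal-length off-diagonal vanishing is already settled by \eqnref{eq:partial2}: if $\ell(u)=\ell(w)$ and $\pbo_w(\schubo_u)\neq 0$ then $\ell(uw^{-1})=0$, forcing $u=w$; so no further work is needed there.
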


\begin{proof}
It follows by Lemma~\ref{lem:factoring} that we have an algebra homomorphism 
\[
\phi: \bodd{\NH}_n \longrightarrow  \End_{\bodd{\Lambda}_n}(\tyodd{\Pol}_n), 
\]
where $\tyodd{\Pol}_n$ is regarded as a right $\bodd{\Lambda}_n$-module. 

	The injectivity of $\phi$ follows from the faithfulness of the action of $\bodd{\NH}_n$.
	Since $\bodd{\NH}_n$ and $\End_{\bodd{\Lambda}_n}(\tyodd{\Pol}_n)$ have the same graded rank by \corref{cor:bsym2}, \corref{cor:bfaithful} and \propref{prop:bfreemodule}, $\phi$ is surjective as well.
\end{proof}

\begin{rem}  
   \label{rem:unequal}
The type B spin Hecke algebra $\mathfrak H_{B_n}^-$ defined in \cite[Definition~4.3]{KW08} has 2 parameters $u_1,u_2 \in \C.$ A spin type B nilHecke algebra of 2 parameters $\bodd{\NH}_n(u_1,u_2)$ can be defined as in Definition~\ref{def:oddtypebnh}, replacing~\eqnref{eq:arel.4}  by
		\begin{align*}
			x_i \pso_i + \pso_i x_{i+1} &= u_1, \qquad
			\pso_i x_i + x_{i+1} \pso_i = u_1,
		\end{align*}
	and~\eqnref{eq:brel.4} by
	\begin{equation*}
		\pbo_n x_n + x_n \pbo_n = u_2.
	\end{equation*}

Now assume both $u_1$ and $u_2$ are nonzero.  All constructions and results in this paper remain valid for the spin type B nilHecke algebra with 2 parameters, once we relax the base ring from $\Z$ to $\C$. This is true because the corresponding Demazure operators (of 2 parameters) can be simply obtained by a rescaling of the current ones, i.e., replacing $\pso_i$ by $u_1 \pso_i$ and $\pbo_n$ by $u_2 \pbo_n$. In particular, over the field $\C$, \thmref{thm:biso} still holds for $\bodd{\NH}_n(u_1,u_2)$. 
\end{rem}

\subsection{The spin type D case}

Let $\mchdo_n$ be the $\Z$-span of the spin type D Schubert polynomials $\{\schudo_w\}_{w\in D_n}$.
Denote by 
\[
\tyodd{\Pol}_{n,\Q} =\Q\otimes_\Z \tyodd{\Pol}_n,
\qquad
\dodd{\Lambda}_{n,\Q} =\Q\otimes_\Z \dodd{\Lambda}_{n},
\qquad
\dodd{\NH}_{n,\Q} =\Q\otimes_\Z \dodd{\NH}_{n}.
\]

\begin{lem}  
	\label{lem:mchdo}
	\quad
 \begin{enumerate}
 \item
	The Schubert polynomials $\{\schudo_w\}_{w\in D_n}$  form a $\Z$-basis for 
	$\mchdo_n$.  
 \item
	The multiplication map $\mchdo_n  \otimes\dodd{\Lambda}_n  \rightarrow \tyodd{\Pol}_n$ is injective.
\end{enumerate}
\end{lem}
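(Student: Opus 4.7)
The plan is to adapt the proof of Lemma~\ref{lem:mchbo} and the injectivity half of Proposition~\ref{prop:bfreemodule} from type B to the type D setting, with the main replacements being Lemma~\ref{lem:schubertlend} in place of Lemma~\ref{lem:schubertlen} and the value $\schudo_e = 2^{n-1}$ from Proposition~\ref{prop:schudo} in place of $\schubo_e = \pm 1$. Since $\mchdo_n$ is by construction the $\Z$-span of the $\schudo_w$, part~(1) reduces to establishing their linear independence over $\Z$.

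For (1), I will suppose $\sum_{w \in D_n} c_w \schudo_w = 0$ with integer coefficients $c_w$ not all zero, pick $w_0$ of maximal length with $c_{w_0} \ne 0$, and apply $\pdo_{w_0}$ to both sides. By \eqref{eq:Dss}, the contribution of a term $\schudo_w$ vanishes unless $\ell(w w_0^{-1}) = \ell(w) - \ell(w_0)$; combined with the maximality of $\ell(w_0)$ in the support, this forces $w = w_0$. The resulting equation $\pm 2^{n-1} c_{w_0} = 0$ in $\tyodd{\Pol}_n$ then forces $c_{w_0} = 0$, a contradiction.

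For (2), I will suppose $\sum_w \schudo_w \ell_w = 0$ with $\ell_w \in \dodd{\Lambda}_n$ not all zero and pick $w_0$ of maximal length with $\ell_{w_0} \ne 0$. The key observation is that $\ell_w \in \bigcap_i \ker(\pdo_i)$ by definition of $\dodd{\Lambda}_n$, so the Leibniz rules for the generators $\pso_i$ and $\pdo_n$ collapse to $\pdo_i(f \ell_w) = \pdo_i(f)\,\ell_w$, and a simple induction on a reduced expression for $w_0$ yields $\pdo_{w_0}(\schudo_w \ell_w) = \pdo_{w_0}(\schudo_w)\,\ell_w$. Running the same length argument as in (1) on $\pdo_{w_0}$ applied to the relation leaves $\pm 2^{n-1} \ell_{w_0} = 0$ in $\tyodd{\Pol}_n$, and $\Z$-torsion-freeness of $\tyodd{\Pol}_n$ forces $\ell_{w_0} = 0$, a contradiction.

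The argument is essentially formal, and I do not anticipate a substantive obstacle. The one point worth flagging is that the factor $2^{n-1} = \schudo_e$ prevents the mod-$\Z$ half of the type B argument --- namely, that a rational Schubert expansion of an integral polynomial automatically has integral coefficients --- from carrying over here; this is precisely why the finer free-module description must be given in Proposition~\ref{prop:dfreemodule} only over $\Q$. For the two weaker statements of the present lemma, however, $\Z$-torsion-freeness of $\tyodd{\Pol}_n$ suffices.
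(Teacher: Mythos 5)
Your proof is correct and follows essentially the same route as the paper's: pick a maximal-length element in the support, apply the corresponding $\pdo_{w_0}$, and use \eqref{eq:Dss}, the collapsed Leibniz rule (Lemma~\ref{lem:factoringd}), and $\schudo_e=2^{n-1}$ to force the top coefficient to vanish. The only cosmetic difference is that the paper runs the argument once, for coefficients in $\dodd{\Lambda}_{n,\Q}$, and deduces both (1) and (2) from that single claim, whereas you run it twice and invoke $\Z$-torsion-freeness instead of passing to $\Q$.
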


\begin{proof}
We make the following

{\bf Claim.}  $\sum_{w\in D_n}  \schudo_w c_w =0$ for $c_w \in  \dodd{\Lambda}_{n,\Q}$ if and only if $c_w=0$ for all $w$.

Indeed, assume $\sum_{w\in D_n}  \schudo_w c_w =0$ and $u\in D_n$ is a maximal length element such that $c_u\neq 0$. It follows by \propref{prop:schudo}, \eqref{eq:Dss} and  the Leibniz rule that 
\[
2^{n-1} c_u = \pdo_u (\sum_{w\in D_n} \schudo_w c_w) =0, 
\]
and so $c_u=0$, a contradiction. The Claim is proved. 

Both (1) and (2) follows from this Claim.
\end{proof}

%

\begin{prop}
	\label{prop:dfreemodule}
As a right (or  a left) $\dodd{\Lambda}_{n,\Q}$-module,  $\tyodd{\Pol}_{n,\Q}$ is free of graded rank $q^{n(n-1)}[n][2n-2]!!$ with a homogeneous basis given by the spin type D Schubert polynomials $\{\schudo_w\}_{w\in D_n}$.
\end{prop}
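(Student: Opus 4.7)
The plan is to combine \lemref{lem:mchdo} with a graded rank comparison, closely paralleling the type B argument of \propref{prop:bfreemodule}. I first treat the right module case and then derive the left module case from it.

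For the right module case, \lemref{lem:mchdo}(2) gives that the multiplication map $\mu\colon \mchdo_n \otimes_\Z \dodd{\Lambda}_n \to \tyodd{\Pol}_n$, $\schudo_w \otimes c \mapsto \schudo_w c$, is injective. Since $\Z \hookrightarrow \Q$ is flat, the base-changed map $\mu_\Q\colon \mchdo_{n,\Q} \otimes_\Q \dodd{\Lambda}_{n,\Q} \hookrightarrow \tyodd{\Pol}_{n,\Q}$ remains injective. For the graded ranks, each $\pdo_i$ has $\Z$-degree $-2$ and $\underline{x}^{\deltad_n}$ has $\Z$-degree $2n(n-1)$, so $\schudo_w = \pdo_{w^{-1}w_n}(\underline{x}^{\deltad_n})$ has degree $2\ell(w)$, whence
\[
\rk_q(\mchdo_n) = \sum_{w \in D_n} q^{2\ell(w)} = q^{n(n-1)}[n][2n-2]!!
\]
via the Poincar\'e polynomial of $D_n$ (the same formula as in \corref{cor:dfaithful}, using that $[n]$ and $[2k]$ are invariant under $q \leftrightarrow q^{-1}$). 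Combined with \corref{cor:rkSFDn}, this yields
\[
\rk_q\bigl(\mchdo_{n,\Q} \otimes_\Q \dodd{\Lambda}_{n,\Q}\bigr) = (1-q^2)^{-n} = \rk_q(\tyodd{\Pol}_{n,\Q}).
\]
Each graded component is finite-dimensional over $\Q$, so the degree-preserving injection $\mu_\Q$ between graded vector spaces of matching graded rank is an isomorphism, establishing the right module freeness with basis $\{\schudo_w\}_{w \in D_n}$.

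To deduce the left module statement, I would exploit the skew-commutativity of $\tyodd{\Pol}_n$. The main obstacle is that $\dodd{\Lambda}_n$ is not central: from $x_j\,\elemdo_n = (-1)^{n-1}\elemdo_n\,x_j$ and the fact that $\schudo_w$ is a sum of monomials all of total variable-degree $\ell(w)$, one obtains $\elemdo_n^k\,\schudo_w = (-1)^{k(n-1)\ell(w)}\,\schudo_w\,\elemdo_n^k$. Decomposing $\dodd{\Lambda}_{n,\Q} = \dodd{\Lambda}_{n,\Q}^{\mathrm{ev}} \oplus \dodd{\Lambda}_{n,\Q}^{\mathrm{odd}}$ according to the parity of $\elemdo_n$-powers, the even summand lies in $\Z[x_1^2,\dots,x_n^2]^{S_n}$ and commutes with every $\schudo_w$, while the odd summand skew-commutes with $\schudo_w$ by the sign $(-1)^{(n-1)\ell(w)}$. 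Given the (already established) right module expansion $f = \sum_w \schudo_w d_w$ with $d_w = d_w^{\mathrm{ev}} + d_w^{\mathrm{odd}}$, the choice
\[
c_w := d_w^{\mathrm{ev}} + (-1)^{(n-1)\ell(w)}\, d_w^{\mathrm{odd}} \in \dodd{\Lambda}_{n,\Q}
\]
satisfies $c_w\,\schudo_w = \schudo_w\,d_w$, and the assignment $d_w \leftrightarrow c_w$ is a $\Q$-linear bijection of $\dodd{\Lambda}_{n,\Q}$ for each $w$. Thus $f = \sum_w c_w \schudo_w$ uniquely, so the left module freeness follows from the right module case.
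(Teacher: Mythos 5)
Your right-module argument is exactly the paper's proof: injectivity of the multiplication map from \lemref{lem:mchdo}(2), surjectivity by matching graded ranks via \corref{cor:rkSFDn} and \lemref{lem:mchdo}(1), and the formula $\rk_q(\mchdo_n)=q^{n(n-1)}[n][2n-2]!!$. The paper dismisses the left-module case as ``similar,'' and your explicit sign-twist handling of the non-central element $\elemdo_n$ (splitting $\dodd{\Lambda}_{n,\Q}$ by parity of $\elemdo_n$-powers and twisting by $(-1)^{(n-1)\ell(w)}$) is a correct elaboration of that omitted step, so the proposal is sound.
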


\begin{proof}
The two cases are similar, and let us choose to prove the right module case.  

It follows by \lemref{lem:mchdo}(2) that the multiplication map $\mchdo_n  \otimes\dodd{\Lambda}_n \rightarrow \tyodd{\Pol}_n$ is injective. The surjectivity of this map follows by comparing the graded ranks, using \corref{cor:rkSFDn} and \lemref{lem:mchdo}(1). Therefore the proposition follows by noting that $\rk_q (\mchdo_n) = q^{n(n-1)}[n][2n-2]!!$. 
 \end{proof}

The following is a type D analogue of~\lemref{lem:factoring} with the same proof.
\begin{lem}
	\label{lem:factoringd}
	For $1\leq i\leq n$, $g\in\dodd{\Lambda}_n$, and $f\in\tyodd{\Pol}_n$, we have
	$\pdo_i(fg) = \pdo_i(f)g.$
\end{lem}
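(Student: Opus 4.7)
The plan is to mimic verbatim the proof of the type B analogue, \lemref{lem:factoring}. By the definition \eqref{SF:D} of $\dodd{\Lambda}_n$, any element $g \in \dodd{\Lambda}_n$ lies in the common kernel $\bigcap_{i=1}^{n-1}\ker(\pso_i) \cap \ker(\pdo_n)$. Recalling the convention $\pdo_i = \pso_i$ for $1 \le i \le n-1$, this means $\pdo_i(g) = 0$ for every $i \in \{1, \dots, n\}$.

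I would then split the verification into two cases. For $1 \le i \le n-1$, the Leibniz rule \eqref{eq:leibnizb} gives
\begin{equation*}
\pso_i(fg) = \pso_i(f) g + \sso_i(f)\pso_i(g) = \pso_i(f) g,
\end{equation*}
since $\pso_i(g) = 0$. For $i = n$, the Leibniz rule for $\pdo_n$ stated in the definition of the type D odd Demazure operators yields
\begin{equation*}
\pdo_n(fg) = \pdo_n(f) g + \sdo_n(f)\pdo_n(g) = \pdo_n(f) g,
\end{equation*}
again because $\pdo_n(g) = 0$. Combining both cases gives $\pdo_i(fg) = \pdo_i(f) g$ for all $1 \le i \le n$.

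There is no serious obstacle: the only ingredient beyond the definitions is that each odd Demazure operator satisfies a Leibniz rule whose second term has $\pdo_i(g)$ as a factor, which is killed by the defining property of $\dodd{\Lambda}_n$.
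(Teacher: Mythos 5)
Your argument is correct and follows the same route as the paper: the statement is an immediate consequence of the Leibniz rules \eqref{eq:leibnizb} and the one for $\pdo_n$, together with the fact that $g\in\dodd{\Lambda}_n$ lies in $\ker(\pdo_i)$ for all $1\le i\le n$ by \eqref{SF:D}. The paper simply notes it is the type D analogue of \lemref{lem:factoring} with the same proof, which is precisely what you wrote out.
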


\begin{thm}
	\label{thm:diso}
We have the following algebra isomorphisms:
	\begin{equation*}
\dodd{\NH}_{n,\Q}  \xrightarrow{\cong} \End_{\dodd{\Lambda}_{n,\Q}} (\tyodd{\Pol}_{n,\Q}  ) \cong \Mat_{q^{n(n-1)}[n][2n-2]!!}  (\dodd{\Lambda}_{n,\Q} ).
	\end{equation*}
\end{thm}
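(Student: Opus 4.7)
The plan is to mimic the proof of Theorem~\ref{thm:biso} almost verbatim, with the single essential change being the need to work over $\Q$ rather than $\Z$. First, Lemma~\ref{lem:factoringd} says that for $g\in\dodd{\Lambda}_n$ and $f\in\tyodd{\Pol}_n$, we have $\pdo_i(fg)=\pdo_i(f)g$. Combined with the trivially commuting fact that left multiplication by $x_j$ commutes with right multiplication by $g$ (as $g$ is an even-degree polynomial), this shows that the left action of $\dodd{\NH}_n$ and the right action of $\dodd{\Lambda}_n$ on $\tyodd{\Pol}_n$ commute. After base change to $\Q$, we therefore obtain a $\Q$-algebra homomorphism
\[
\phi: \dodd{\NH}_{n,\Q}\longrightarrow \End_{\dodd{\Lambda}_{n,\Q}}(\tyodd{\Pol}_{n,\Q}).
\]

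Next I would verify $\phi$ is injective: since the polynomial representation of $\dodd{\NH}_n$ on $\tyodd{\Pol}_n$ is faithful by Corollary~\ref{cor:dfaithful}, the same holds after base change. For surjectivity I plan to compare graded ranks. By Corollary~\ref{cor:dfaithful} (specializing $\pi=1$),
\[
\rk_q(\dodd{\NH}_{n,\Q})=\frac{q^{-n(n-1)}[n][2n-2]!!}{(1-q^2)^n}.
\]
On the other hand, by Proposition~\ref{prop:dfreemodule}, $\tyodd{\Pol}_{n,\Q}$ is a graded free $\dodd{\Lambda}_{n,\Q}$-module of graded rank $P(q):=q^{n(n-1)}[n][2n-2]!!$, so its endomorphism ring has graded rank $P(q)P(q^{-1})\,\rk_q(\dodd{\Lambda}_{n,\Q})$. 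Since each $[k]$ is invariant under $q\leftrightarrow q^{-1}$, we have $P(q^{-1})=q^{-n(n-1)}[n][2n-2]!!$, and Corollary~\ref{cor:rkSFDn} gives $\rk_q(\dodd{\Lambda}_{n,\Q})=q^{-n(n-1)}(1-q^2)^{-n}([n][2n-2]!!)^{-1}$. Multiplying these out yields exactly $\rk_q(\dodd{\NH}_{n,\Q})$, so the injective map $\phi$ must be an isomorphism. The second isomorphism in the statement is then just the standard identification of the endomorphism ring of a graded free module of graded rank $q^{n(n-1)}[n][2n-2]!!$ with the corresponding graded matrix algebra over $\dodd{\Lambda}_{n,\Q}$.

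The substantive work for this theorem has already been done in the preparatory results: establishing $\dodd{\Lambda}_n$ is the expected polynomial ring (Proposition~\ref{prop:dodd}), the PBW basis and faithfulness for $\dodd{\NH}_n$ (Proposition~\ref{prop:linearreld}, Corollary~\ref{cor:dfaithful}), and most crucially the freeness of $\tyodd{\Pol}_{n,\Q}$ over $\dodd{\Lambda}_{n,\Q}$ with Schubert-polynomial basis (Proposition~\ref{prop:dfreemodule}). The principal obstacle in the whole development, reflected in the statement only being available over $\Q$, is the constant $\schudo_e=2^{n-1}$ from Proposition~\ref{prop:schudo}: unlike in type B where $\schubo_e=\pm 1$, here one must invert $2$ to guarantee that the spin type D Schubert polynomials span $\mchdo_n$ and that the multiplication map $\mchdo_n\otimes\dodd{\Lambda}_n\to\tyodd{\Pol}_n$ is surjective. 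Once that base change is made, the argument is a direct transcription of the type B case.
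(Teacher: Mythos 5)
Your proposal is correct and follows the paper's own proof essentially verbatim: the homomorphism from Lemma~\ref{lem:factoringd}, injectivity from faithfulness (Corollary~\ref{cor:dfaithful}), and surjectivity by the graded rank comparison using Corollary~\ref{cor:rkSFDn} and Proposition~\ref{prop:dfreemodule}. The explicit rank computation and the remark on $\schudo_e=2^{n-1}$ forcing the base change to $\Q$ match the paper's reasoning, so no further changes are needed.
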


\begin{proof}
It follows by \lemref{lem:factoringd} that we have an algebra homomorphism 
\[
\phi: \dodd{\NH}_n \longrightarrow  \End_{\dodd{\Lambda}_n}(\tyodd{\Pol}_n), 
\]
where $\tyodd{\Pol}_n$ is regarded as a right $\dodd{\Lambda}_n$-module. 
	The injectivity of $\phi$ follows from the faithfulness of the action of $\dodd{\NH}_n$.
	Since $\dodd{\NH}_n$ and $\End_{\dodd{\Lambda}_n}(\tyodd{\Pol}_n)$ have the same graded rank by 
   \corref{cor:rkSFDn}, \corref{cor:dfaithful} and \propref{prop:dfreemodule}, $\phi$ is surjective as well.
\end{proof}

Denote by $\dodd{\NH}_n[\frac12] = \Z[\frac12] \otimes_\Z \dodd{\NH}_n$ after a base change, and so on. The following conjecture has been verified for $n=2$.    

\begin{conj}
  \label{conj:Z12}
We have the following graded algebra isomorphism
	\begin{equation*}
\dodd{\NH}_n[\frac12]  \xrightarrow{\cong} \End_{\dodd{\Lambda}_n [\frac12]} \Big(\tyodd{\Pol}_n[\frac12] \Big) \cong \Mat_{q^{n(n-1)}[n][2n-2]!!} \Big(\dodd{\Lambda}_n[\frac12] \Big).
	\end{equation*}
\end{conj}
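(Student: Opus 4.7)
The plan is to extend the proofs of \lemref{lem:mchdo}, \propref{prop:dfreemodule}, and \thmref{thm:diso} from $\Q$ to $\Z[\tfrac{1}{2}]$. A close inspection shows that the essential use of $\Q$ in those arguments is to invert the factor $2^{n-1}$ from the identity $\schudo_e = 2^{n-1}$ (\propref{prop:schudo}), so $\Z[\tfrac{1}{2}]$ should suffice. In particular, the proof of \lemref{lem:mchdo} carries over to the base ring $\Z[\tfrac{1}{2}]$ unchanged---its sole division step, ``$2^{n-1} c_u = 0 \Rightarrow c_u = 0$'', is valid whenever $2$ is invertible---giving the $\Z[\tfrac{1}{2}]$-basis $\{\schudo_w\}_{w \in D_n}$ of $\mchdo_n[\tfrac{1}{2}]$ and the injectivity of $\mchdo_n[\tfrac{1}{2}] \otimes \dodd{\Lambda}_n[\tfrac{1}{2}] \to \tyodd{\Pol}_n[\tfrac{1}{2}]$.

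The principal step is to extend \propref{prop:dfreemodule}, showing that $\tyodd{\Pol}_n[\tfrac{1}{2}]$ is free over $\dodd{\Lambda}_n[\tfrac{1}{2}]$ with basis $\{\schudo_w\}_{w \in D_n}$. Granted injectivity, I would establish surjectivity by a recursive coefficient extraction. Enumerating $D_n = \{w_1, \ldots, w_N\}$ in decreasing order of length (so $w_1 = w_n$ and $w_N = e$), set $f^{(0)} = f$ and, at step $k$, compute
\[
\tilde{\ell}_{w_k} := \sigma_k \, \pdo_{w_k}\bigl(f^{(k-1)}\bigr) \big/ 2^{n-1},
\]
where $\sigma_k = \pm 1$ arises from $\pdo_{w_k}(\schudo_{w_k}) = \sigma_k \cdot 2^{n-1}$ (\lemref{lem:schubertlend}); then extract the true coefficient $\ell_{w_k}$ from $\tilde{\ell}_{w_k}$ as described below, and set $f^{(k)} = f^{(k-1)} - \ell_{w_k}\schudo_{w_k}$. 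The inductive invariants are (i) $\pdo_{w_j}\bigl(f^{(k)}\bigr) = 0$ for all $j \leq k$, a consequence of $\pdo_i \pdo_{w_k} = \pm \pdo_{s_i w_k}$ (or $0$) combined with the previous stage; and (ii) $\pdo_{w_k}\bigl(f^{(k-1)}\bigr) \in \bigcap_i \ker(\pdo_i) = \dodd{\Lambda}_n$, ensuring $\tilde{\ell}_{w_k}, \ell_{w_k} \in \dodd{\Lambda}_n[\tfrac{1}{2}]$. Matching the recursion's output against the unique $\Q$-decomposition (\propref{prop:dfreemodule} over $\Q$) yields $f^{(N)} = 0$.

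The main technical obstacle I anticipate is the Leibniz twist $\pdo_w(\ell g) = \sdo_w(\ell)\,\pdo_w(g)$ for $\ell \in \dodd{\Lambda}_n$. While each generator $\elembo_k$ ($k < n$) is $\sdo_i$-invariant, the generator $\elemdo_n = x_1\cdots x_n$ transforms by a sign character: a direct skew-commutation calculation in $\tyodd{\Pol}_n$ yields $\sdo_i(\elemdo_n) = (-1)^{n-1}\elemdo_n$ for all $i = 1, \ldots, n$. Decomposing $\dodd{\Lambda}_n = \bodd{\Lambda}_n \oplus \bodd{\Lambda}_n\elemdo_n$ and writing $\tilde{\ell}_{w_k} = a + b\,\elemdo_n$ with $a, b \in \bodd{\Lambda}_n[\tfrac{1}{2}]$, the correct coefficient is $\ell_{w_k} = a + \epsilon^{\ell(w_k)} b\,\elemdo_n$ with $\epsilon = (-1)^{n-1}$, so that $\pdo_{w_k}(\ell_{w_k}\schudo_{w_k}) = \pdo_{w_k}\bigl(f^{(k-1)}\bigr)$ and the recursion closes. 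Tracking these signs throughout is the delicate bookkeeping that distinguishes this argument from its type B counterpart in \propref{prop:bfreemodule}, where the analogous sign is trivial.

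With the $\Z[\tfrac{1}{2}]$-version of \propref{prop:dfreemodule} in place, the conjecture follows as in \thmref{thm:diso}. The graded $\dodd{\Lambda}_n[\tfrac{1}{2}]$-algebra homomorphism $\phi : \dodd{\NH}_n[\tfrac{1}{2}] \to \End_{\dodd{\Lambda}_n[\tfrac{1}{2}]}\bigl(\tyodd{\Pol}_n[\tfrac{1}{2}]\bigr)$ is injective by faithfulness (\corref{cor:dfaithful}). For surjectivity, I would exhibit explicit preimages of the matrix units $E_{v,w}$ (defined by $E_{v,w}(\schudo_u) = \delta_{u,w}\schudo_v$): for instance, $E_{w_n, w_n}$ is realized by $\tfrac{1}{\sigma_1 2^{n-1}}\, \schudo_{w_n} \cdot \pdo_{w_n}$, and the general $E_{v,w}$ is assembled by pre- and post-composing with operators built from the spin Schubert polynomials and their duals, each such assembly requiring at most one inversion of $2^{n-1}$.
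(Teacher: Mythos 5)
First, be aware that the paper does not prove this statement: it is stated as Conjecture~\ref{conj:Z12}, verified only for $n=2$, so there is no proof of the authors' to compare yours against and any proposal must be held to the standard of settling an open problem. With that caveat, the first two-thirds of your plan look essentially sound. The extension of \lemref{lem:mchdo} is immediate (its division step even works over $\Z$), and your coefficient-extraction recursion for the $\Z[\tfrac12]$-analogue of \propref{prop:dfreemodule} is a genuine argument that the paper lacks (its type D surjectivity is a graded-rank count, which only works over a field): the invariants (i)--(ii) do propagate, since $\pdo_i\pdo_{w_k}=\pm\pdo_{s_iw_k}$ or $0$ and longer elements precede $w_k$ in your enumeration, and the only denominators introduced are powers of $2$. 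Two remarks: you should record that $\bigcap_i\ker(\pdo_i)$ commutes with the flat base change $\Z\to\Z[\tfrac12]$, so the extracted coefficients really lie in $\dodd{\Lambda}_n[\tfrac12]$; and your sign bookkeeping with $\sdo_i(\elemdo_n)=(-1)^{n-1}\elemdo_n$ (which is correct) is avoidable altogether by extracting \emph{right}-hand coefficients, since \lemref{lem:factoringd} gives the untwisted identity $\pdo_w(g\ell)=\pdo_w(g)\ell$ for $\ell\in\dodd{\Lambda}_n$. Also, $f^{(N)}=0$ is forced directly by the last step ($\pdo_e=\mathrm{id}$); no appeal to the $\Q$-decomposition is needed.

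The genuine gap is the final step, surjectivity of $\phi$. You cannot say "the conjecture follows as in \thmref{thm:diso}": the surjectivity there (and in \thmref{thm:biso}) is an injectivity-plus-equal-graded-rank argument, which fails over a ring that is not a field. Your replacement --- exhibiting preimages of the matrix units --- is the right idea, but it is only carried out for the single unit $E_{w_n,w_n}$, and that case is special: $\tfrac{1}{\sigma_1 2^{n-1}}\schudo_{w_n}\pdo_{w_n}$ works only because $w_n$ is the longest element, so $\pdo_{w_n}$ kills every other Schubert polynomial. For a general $w$, $\pdo_w(\schudo_u)=\pm\schudo_{uw^{-1}}\neq 0$ whenever $\ell(u)>\ell(w)$ and the lengths subtract, so $\schudo_v\pdo_w$ is only \emph{triangular} with respect to the Schubert basis, not a matrix unit, and the ``duals'' of the spin Schubert polynomials that your assembly invokes are never defined, let alone shown to exist over $\dodd{\Lambda}_n[\tfrac12]$. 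Their existence is equivalent to the perfectness over $\dodd{\Lambda}_n[\tfrac12]$ of the pairing $(f,g)\mapsto 2^{-(n-1)}\pdo_{w_0}(fg)$ on the Schubert basis, which is precisely the hard content left in the conjecture beyond freeness. A plausible way to close it is a downward induction on $\ell(w)$: set $T_{w_0}=\pm 2^{-(n-1)}\pdo_{w_0}$ and define $T_w=\pm 2^{-(n-1)}\pdo_w-\sum_{\ell(u)>\ell(w)}\pm 2^{-(n-1)}\schudo_{uw^{-1}}\,T_u$ (sum over those $u$ with $\ell(uw^{-1})=\ell(u)-\ell(w)$), checking $T_w(\schudo_u c)= \delta_{u,w}c$ and then taking $E_{v,w}=\schudo_v\,T_w$; the corrections again cost only powers of $2$. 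Until an argument of this kind (or an alternative, e.g.\ reduction modulo each odd prime together with a graded dimension count) is written out and verified, the proposal does not yet prove the conjecture.
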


\subsection{Categorification}

We consider  the category $\bodd{\NH}_{n}$-pmod (and respectively, $\aodd{\NH}_{n}$-pmod) of finitely generated $\Z\times \Z_2$-graded left projective $\bodd{\NH}_{n,\Q}$-modules (respectively, $\aodd{\NH}_{n,\Q}$-modules)
and its Grothendieck group $K_0(\bodd{\NH}_n)$ (respectively, $K_0(\aodd{\NH}_n)$). 
The category $\bodd{\NH}_{n}$-pmod admits a $\Z$-grading shift functor $\mathbf q$ and a parity shift functor $\Pi$. 
Define 
\begin{align*}
   K_0(\aodd{\NH}) &= \bigoplus_{n \geq 0} K_0(\aodd{\NH}_n),  \qquad
   K_0(\bodd{\NH}) = \bigoplus_{n \geq 0} K_0(\bodd{\NH}_n). 
\end{align*}
Via the natural inclusion of algebras $\aodd{\NH}_m \otimes \aodd{\NH}_n\rightarrow \aodd{\NH}_{m+n}$, one defines the induction and restriction functors (for varying $m,n$), which give rise to an induction functor and a restriction functor on the Grothendieck group level as follows:
\begin{align*}
\aeven{\IND}: K_0(\aodd{\NH})  \otimes K_0(\aodd{\NH}) \longrightarrow K_0(\aodd{\NH}), 
\\
\aeven{\RES}: K_0(\aodd{\NH}) \longrightarrow K_0(\aodd{\NH})  \otimes K_0(\aodd{\NH}).
\end{align*}
These functors equip a twisted bialgebra structure on $K_0(\aodd{\NH})$ ~\cite{EKL14, HW15} (also cf. \cite{La08}). 

One can introduce a bar map $\ov{\phantom{x}}$ on  $K_0(\aodd{\NH})$ which satisfies $\ov{q}=\pi q^{-1}$, $\ov{\pi}=\pi$; cf. \cite{HW15};  the $(q,\pi)$-integers \eqref{eq:doublefactorial} are bar-invariant. The category $\aodd{\NH}_n$-pmod contains a unique (up to  isomorphism) self-dual projective indecomposable module $\mathcal E^{(n)}$. Hence, we have 
\[
K_0(\aodd{\NH}_n) \cong \Z[q,q^{-1}, \pi],
\qquad
K_0(\aodd{\NH}) \cong \oplus_{n\ge 0} \Z[q,q^{-1}, \pi] \mathcal E^{(n)}.
\] 
The twisted bialgebra $K_0(\aodd{\NH})$ is identified with the half quantum covering algebra of rank one $\U^+_{q,\pi}(\mathfrak{sl}_2)$; cf. \cite{HW15}. Recall $\U^+_{q,\pi}(\mathfrak{sl}_2) =\oplus_{n\in \N} \Z[q,q^{-1},\pi] E^{(n)}$ is an algebra 
such that 
\[
E^{(m)}  E^{(n)} = \frac{[m+n]_\pi !}{[m]_\pi ! \cdot [n]_\pi !} E^{(m+n)}.
\]
If we ignore the $\Z_2$-grading in the above considerations (which correspond to setting $\pi=1$), then the twisted bialgebra $K_0(\aodd{\NH})$ is identified with the half quantum group  of rank one $\U^+_{q}(\mathfrak{sl}_2)$, as first shown in \cite{EKL14}. 

Now we consider the type B algebras as well as type A algebras.
There exist natural inclusion of algebras 
\[
\aodd{\NH}_m \otimes \bodd{\NH}_n\longrightarrow \bodd{\NH}_{m+n}, 
\]
 and this gives rise to an induction functor and a restriction functor on the Grothendieck group level as follows:
\begin{align*}
\IND: K_0(\aodd{\NH})  \otimes K_0(\bodd{\NH}) \longrightarrow K_0(\bodd{\NH}), 
\\
\RES: K_0(\bodd{\NH}) \longrightarrow K_0(\aodd{\NH})  \otimes K_0(\bodd{\NH}).
\end{align*}

\begin{prop}
\label{prop:module}
The functors $\IND, \RES$ equip  $K_0(\bodd{\NH})$ with a bialgebra module structure over the twisted bialgebra $K_0(\aodd{\NH})$. 
\end{prop}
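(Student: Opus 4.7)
The plan is to adapt the strategy used by Ellis--Khovanov--Lauda and Hill--Wang for the type A twisted bialgebra structure on $K_0(\aodd{\NH})$ to the present module setting. A crucial simplification is provided by the matrix algebra identification $\bodd{\NH}_n \cong \Mat_{|B_n|}(\bodd{\Lambda}_n)$ of \thmref{thm:biso}, which implies that $\bodd{\NH}_n$ and $\bodd{\Lambda}_n$ are graded Morita equivalent. In particular, each $K_0(\bodd{\NH}_n)$ is a free $\Z[q,q^{-1},\pi]$-module of rank one, generated by the class of a distinguished self-dual projective indecomposable, and the inductions/restrictions can be computed directly on these generators.

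First I would verify that $\IND$ defines an associative action. This reduces to the transitivity of parabolic induction through the chain
\[
\aodd{\NH}_{m_1} \otimes \aodd{\NH}_{m_2} \otimes \bodd{\NH}_n \hookrightarrow \aodd{\NH}_{m_1+m_2} \otimes \bodd{\NH}_n \hookrightarrow \bodd{\NH}_{m_1+m_2+n},
\]
combined with the fact that $\bodd{\NH}_{m+n}$ is free as a graded right $\aodd{\NH}_m \otimes \bodd{\NH}_n$-module. This freeness follows from the PBW-type basis of \propref{prop:linearrel}: choosing coset representatives of $S_m \times B_n$ in $B_{m+n}$, the corresponding $\pbo_w$'s give a basis. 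Exactness of $\IND$ on projectives and the agreement of the two composite functors on each indecomposable then give the module axiom. The same argument on the restriction side (dualizing through the same chain of inclusions) yields coassociativity of $\RES$, hence a $K_0(\aodd{\NH})$-comodule structure.

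The main obstacle is the compatibility axiom — the bialgebra module condition, which should take the form of a Mackey-type identity
\[
\RES \circ \IND = m \circ (\mathrm{id} \otimes \IND) \circ \tau \circ (\aeven{\RES} \otimes \RES),
\]
where $m$ is the multiplication of the twisted bialgebra and $\tau$ is the appropriate braiding incorporating the signs from the $\Z_2$-grading. To prove this I would analyze $\bodd{\NH}_{m+n}$ as an $(\aodd{\NH}_{m_1} \otimes \bodd{\NH}_{n_1},\, \aodd{\NH}_{m_2} \otimes \bodd{\NH}_{n_2})$-bimodule and filter it by cells indexed by the minimal-length representatives of the double cosets $(S_{m_1} \times B_{n_1}) \backslash B_{m+n} / (S_{m_2} \times B_{n_2})$. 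The associated graded of this filtration breaks into a direct sum of bimodules, each of which is an external tensor product obtained from a smaller induction, yielding the Mackey decomposition termwise.

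The delicate point is the bookkeeping of signs coming from the odd 2-cocycle: each time one exchanges two nilHecke generators via relations like \eqref{eq:arel.3}, \eqref{eq:brel.3} or \eqref{eq:brel.2}, an extra sign appears, and one must check that the total sign across a double coset matches the $\pi$-twist appearing in the twisted multiplication of $K_0(\aodd{\NH})$. This is exactly parallel to the sign analysis carried out in the pure type A case in \cite{HW15}, with the only genuinely new input being that the double cosets now involve the extra generator $\sbn_n$, whose effect on the Weyl group combinatorics is mild (it commutes with $s_i$ for $i \le n-2$). Once this sign calculation is done, the compatibility axiom holds on the classes of the projective indecomposables, and hence on all of $K_0(\bodd{\NH})$, completing the proof.
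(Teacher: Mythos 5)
Your proposal takes essentially the same route the paper intends: the paper in fact omits the proof of this proposition, stating that it does not really differ from the type A bialgebra arguments of \cite{La08, EKL14, HW15}, and your sketch (freeness of $\bodd{\NH}_{m+n}$ over $\aodd{\NH}_m \otimes \bodd{\NH}_n$ via the PBW basis, transitivity of parabolic induction, a Mackey-type double-coset filtration with the $\pi$-sign bookkeeping as in \cite{HW15}, and reduction to the rank-one $K_0(\bodd{\NH}_n)$ via the matrix algebra identification) is precisely that adaptation. No genuine gap at the level of detail the paper itself demands.
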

We skip the proof of the above proposition, which does not really differ from the proof for the bialgebra structure on $K_0(\aodd{\NH})$ in \cite{La08, EKL14, HW15}. 

The category $\bodd{\NH}_n$-pmod contains a unique (up to  isomorphism) self-dual projective indecomposable module $\beven{\mathcal E}^{(n)}$, which is isomorphic to (up to some grading shift) the polynomial representation $\Pol_n^-$. Hence we have
\[
K_0(\bodd{\NH})  \cong \bigoplus_{n\ge 0} \Z[q,q^{-1}, \pi] \beven{\mathcal E}^{(n)}. 
\]
 Denote by $\beven{\mathcal E}^{n}$ the regular representation of $\bodd{\NH}_n$. Recalling $[n]_{\pi}!!$ from \eqref{eq:doublefactorial}, we have
\[
\beven{\mathcal E}^{n} \cong \bigoplus_{[n]_{\pi}!!} \beven{\mathcal E}^{(n)}.
\]
Here $\bigoplus\limits_f M$, for a Laurent polynomial $f =\sum_{j,\alpha} f_{j,\alpha} q^j \pi^\alpha \in \N[q,q^{-1},\pi]$ and a graded module $M$, denote the direct sum of $f_{j,\alpha}$ copies of $\mathbf{q}^j \Pi^\alpha M$.
Thus the left $K_0(\aodd{\NH})$-module structure on $K_0(\bodd{\NH})$ is given by 
\[
{\mathcal E}^{(m)} \, \beven{\mathcal E}^{(n)} =\qbinom{m+n}{n}_{\bb} \beven{\mathcal E}^{(m+n)}, \;\; 
\text{ where } \qbinom{m+n}{n}_{\bb}= \frac{[2m+2n]_\pi !!}{[m]_\pi ! \cdot [2n]_\pi !!}.
\] 

\begin{rem}
A similar categorification in type D can be formulated as above. 
\end{rem}


\appendix
\section{NilHecke algebras are matrix algebras}
\label{appendix:A}

In this appendix, we review the polynomial representation of the nilHecke algebra $\NH_W$ associated to any Weyl group via Demazure operators. We show that the algebra $\NH_{W,\Q}$ over $\Q$ is a matrix algebra with entries in the the algebra of $W$-invariant polynomials. 

\subsection{The polynomial representations}

Let $W$ be a finite Weyl group generated by simple reflections $s_i$  $(i\in I)$, and $\mathfrak h$ be the reflection representation (over $\Z$) of $W$. Let $\alpha_i \in \h^*$ denote the simple root and  let $\alpha_i^\vee \in \mathfrak h$ be the simple coroot, for $i\in I$. The degenerate affine Hecke algebra $\hh_W$ associated to $W$ was introduced by Lusztig \cite{Lu89}, and its corresponding nilHecke agebra over $\Z$ will be denoted by $\NH_W$. 

As a $\Z$-module, $\NH_W \cong \NC_W \otimes S(\mathfrak h)$, and $\NH_W$ contains the nilCoxeter algebra $\NC_W =\Z \langle \partial_i, i \in I \rangle$ (with $\partial_i^2=0$) and the symmetric algebra $S(\mathfrak h)$ as $\Z$-subalgebras. In addition, it satisfies the following relations:
\begin{align}  \label{H:dual}
x \, \partial_i -\partial_i \,  x^{s_i} = \langle x, \alpha_i  \rangle,
\qquad \text{  for } x\in \h, i\in I, 
\end{align}
where $x^{s_i}$ denotes the image of $x$ under the reflection $s_i$. One sometimes multiplies the RHS of \eqref{H:dual} by a parameter  $u_i$ depending on the $W$-conjugacy classes of $s_i$, and $u_i$ is normalized to be 1 in this paper.
Note $\NH_W$ is a $\Z$-graded algebra with $|\partial_i|=-2$ and $|h|=2$, for all $i\in I$ and $h\in \mathfrak h$. Alternatively, there is a natural $\Z$-filtered algebra structure on $\hh_W$ and its associated graded is isomorphic to $\NH_W$. 
The following fact is folklore. 

\begin{prop}  \label{prop:polR}
The nilHecke agebra $\NH_W$ admits a polynomial representation $\Pol_{\mathfrak h} =S(\mathfrak h)$, where $h\in \mathfrak h$ acts as a multiplication operator and $\partial_i$ acts as the Demazure operator $\cdot \frac{1-s_i}{\alpha_i}$. 
\end{prop}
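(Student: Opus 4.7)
The plan is to verify that the defining relations of $\NH_W$ — the nilCoxeter relations $\partial_i^2=0$ and the braid relations, together with the commutation relation~\eqref{H:dual} — are satisfied by the proposed operators on $\Pol_{\mathfrak h}=S(\mathfrak h)$; multiplication by elements of $\mathfrak h$ tautologically realizes the subalgebra $S(\mathfrak h)$ on itself.

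First I would check that $\partial_i=\frac{1-s_i}{\alpha_i}$ is well-defined as a $\Z$-linear endomorphism of $S(\mathfrak h)$: since $s_i$ fixes the hyperplane $\{\alpha_i=0\}$ pointwise, for any $f\in S(\mathfrak h)$ the element $f-s_i(f)$ vanishes on this hyperplane and is therefore divisible by $\alpha_i$ in $S(\mathfrak h)$. The identity $\partial_i^2=0$ is then a direct computation using $s_i(\alpha_i)=-\alpha_i$ and $s_i^2=1$, which produces a telescoping cancellation. For the commutation relation, I would first establish the Leibniz rule
\[
\partial_i(fg)=\partial_i(f)\,g+s_i(f)\,\partial_i(g),
\]
obtained by writing $fg-s_i(f)s_i(g)=(f-s_i(f))g+s_i(f)(g-s_i(g))$ and dividing by $\alpha_i$. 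Specializing $f=x^{s_i}$ for $x\in\mathfrak h$ and combining with the short calculation $\partial_i(x^{s_i})=-\langle x,\alpha_i\rangle$, the Leibniz identity rearranges into~\eqref{H:dual}.

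The main obstacle is the braid relation $\underbrace{\partial_i\partial_j\cdots}_{m_{ij}}=\underbrace{\partial_j\partial_i\cdots}_{m_{ij}}$ for each $i\neq j$, whose verification genuinely depends on the order $m_{ij}$ of $s_is_j$. Since all operators in the braid identity preserve the subring of $S(\mathfrak h)$ generated by the restriction to the two-dimensional reflection representation of $\langle s_i,s_j\rangle$, the problem reduces to a case-by-case verification in the four rank-two Weyl groups $A_1\times A_1$, $A_2$, $B_2$, $G_2$. These are classical computations and can be found, for instance, in~\cite{Ku02}. With the braid relations in hand, alongside $\partial_i^2=0$ and~\eqref{H:dual}, the prescribed operators define the desired representation of $\NH_W$ on $\Pol_{\mathfrak h}$.
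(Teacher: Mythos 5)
Your proof is correct, but it reaches the proposition by a genuinely different route than the paper. The paper's own proof is essentially a one-liner: it defines $\Pol_{\mathfrak h}$ as the induced module $\Ind_{\NC_W}^{\NH_W}\Z$ from the trivial module of the nilCoxeter subalgebra, so that a module structure exists by construction; the identification of the underlying space with $S(\mathfrak h)$ comes from the triangular decomposition $\NH_W\cong\NC_W\otimes S(\mathfrak h)$ stated just before the proposition, and the fact that $\partial_i$ then acts by the Demazure operator is left implicit (``the rest follows''). You instead build the representation directly, verifying that the multiplication operators together with the Demazure operators satisfy the defining relations: well-definedness (divisibility of $f-s_i(f)$), $\partial_i^2=0$, the twisted Leibniz rule specialized at $f=x^{s_i}$ to recover \eqref{H:dual}, and the braid relations reduced to rank two. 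This is exactly the style of argument the paper uses for its spin analogues (Theorems~\ref{thm:rel} and \ref{thm:typedrelations}), and it buys something the induced-module argument does not: it does not presuppose the PBW-type decomposition of $\NH_W$, and the explicit operator identities are what one needs downstream for faithfulness or a PBW basis; its cost is the braid-relation check, which you reasonably outsource to classical rank-two computations (available in \cite{Ku02}, and going back to \cite{BGG73, De74}). Two small points to tighten: since everything is over $\Z$, the divisibility of $f-s_i(f)$ is cleanest by induction on degree via the identity $(1-s_i)(fg)=(1-s_i)(f)\,g+s_i(f)\,(1-s_i)(g)$ rather than by the vanishing-on-a-hyperplane argument, which a priori only gives divisibility after tensoring with $\Q$; and the reduction of the braid relation to rank two uses the $\langle s_i,s_j\rangle$-stable splitting $\mathfrak h_\Q=\Span\{\alpha_i^\vee,\alpha_j^\vee\}\oplus(\ker\alpha_i\cap\ker\alpha_j)$, which exists over $\Q$ and suffices because both sides of the braid identity are $\Z$-linear operators on a free $\Z$-module.
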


\begin{proof}
Define $\Pol_{\mathfrak h}$ to be the induced $\hh$-module $\text{Ind}_{\NC_W}^\hh \Z$ from the trivial $\NC_W$-module $\Z$. The rest follows. 
\end{proof}

An argument almost identical to Lemma~\ref{lem:bexact} shows that $\ker(\partial_i) =\im(\partial_i) \subset S(\mathfrak h)$. 
Define
\[
\Lambda_{\mathfrak h} := \bigcap_{i\in I} \ker(\partial_i).
\]
It follows from Proposition~\ref{prop:polR} that 
\begin{align}  \label{eq:inv}
\Lambda_{\mathfrak h} =S(\mathfrak h)^W, 
\end{align}
the subalgebra of $W$-invariants in $S(\mathfrak h)$. 

\subsection{The nilHecke algebras of classical type}

Let us be more explicit for classical type. 
For $W$ of classical types $A_{n-1}, B_n, D_n$, we naturally identify $S(\mathfrak h)$ with the polynomial algebra $\Z[\x_1, \ldots, \x_n]$, and write $\tya{\NH}_n =\NH_{{S_{n}}}$, $\tyb{\NH}_n =\NH_{{B_n}}$, $\tyd{\NH}_n =\NH_{{D_n}}$. 

\begin{rem}
The only difference between presentations of $\tyb{\NH}_n$ (and respectively, $\tyd{\NH}_n$) and $\bodd{\NH}_n$  in Definition~\ref{def:oddtypebnh} (and respectively, $\dodd{\NH}_n$ in Definition ~ \ref{def:oddtypednh}) are some suitable sign changes; such a phenomenon has already been observed between degenerate affine Hecke algebras of classical type and their spin counterparts \cite{KW08}.
\end{rem}

In classical types, let us write $\Lambda_{\mathfrak h}$ in terms of the more familiar notations $\tya{\Lambda}_n$, $\tyb{\Lambda}_n$, $\tyd{\Lambda}_n$, and so forth; we also write $\tyeven{\Pol}_{\mathfrak h}$ as $\tyeven{\Pol}_n$. 
The equality \eqref{eq:inv} for classical types reads
\begin{equation}  \label{eq:invB}
\tya{\Lambda}_n = \Z[\x_1, \ldots, \x_n]^{S_n},
\quad
\tyb{\Lambda}_n = \Z[\x_1^2, \ldots, \x_n^2]^{S_n},
\quad
\tyd{\Lambda}_n = 
\tyb{\Lambda}_n [\x_1\cdots \x_n].
\end{equation}
Introduce, for $1 \leq k \leq n-1,$
\begin{align}
	\beven{\varepsilon}_k (\x_1,\dotsc,\x_n) &= \deven{\varepsilon}_k (\x_1,\dotsc,\x_n) =  
		\sum_{1 \leq i_1 < \dotsb < i_k \leq n} \x_{i_1}^2\dotsm \x_{i_k}^2,  
\\
	\beven{\varepsilon}_n (\x_1,\dotsc,\x_n) &= \x_1^2 \dotsm \x_n^2, 
\quad\quad 
   \deven{\varepsilon}_n (\x_1,\dotsc,\x_n) = \x_1 \dotsm \x_n. %
\end{align}
By Chevalley's theorem, $\tyb{\Lambda}_n$ and $\tyd{\Lambda}_n$ are polynomial algebras in $n$ generators, and  
\begin{equation} 
  \label{eq:invBD2}
\tyb{\Lambda}_n = \Z[\beven{\varepsilon}_1, \ldots, \beven{\varepsilon}_{n-1}, \beven{\varepsilon}_n],
\qquad
\tyd{\Lambda}_n = \Z[\deven{\varepsilon}_1, \ldots, \deven{\varepsilon}_{n-1}, \deven{\varepsilon}_n].
\end{equation}

We record the following corollary of \eqref{eq:invBD2}.
\begin{cor}
	\label{cor:blambdarkeven}
The graded ranks of $\beven{\Lambda}_n$ and $\deven{\Lambda}_n$ are given as follows:
	\begin{align}
	\begin{split}
		\rk_q(\beven{\Lambda}_n) &= q^{-n^2}\frac{1}{{(1-q^2)}^n}\frac{1}{[2n]!!},
		\\
		\rk_q(\deven{\Lambda}_n) &= q^{-n(n-1)} \frac{1}{(1-q^2)^n} \frac{1}{[n][2n-2]!!}. 
	\end{split}
	\end{align}
\end{cor}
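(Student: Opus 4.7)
The strategy is completely elementary: use the fact that \eqref{eq:invBD2} exhibits $\beven{\Lambda}_n$ and $\deven{\Lambda}_n$ as polynomial algebras on explicit homogeneous generators, write the graded rank as a product of geometric series, and then rewrite each factor in terms of $q$-integers.

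First, I would read off the degrees. Under the grading $|\x_i|=2$ from \eqref{eq:grading}, we have $|\beven{\varepsilon}_k| = |\deven{\varepsilon}_k| = 4k$ for $1\le k\le n-1$, while $|\beven{\varepsilon}_n|=4n$ and $|\deven{\varepsilon}_n|=2n$. By \eqref{eq:invBD2} together with Chevalley's theorem,
\begin{equation*}
\rk_q(\beven{\Lambda}_n)=\prod_{k=1}^{n}\frac{1}{1-q^{4k}},\qquad \rk_q(\deven{\Lambda}_n)=\frac{1}{1-q^{2n}}\prod_{k=1}^{n-1}\frac{1}{1-q^{4k}}.
\end{equation*}

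Next, the arithmetic step. The identities $1-q^{4k}=-q^{2k}(q-q^{-1})[2k]$ and $1-q^{2n}=-q^{n}(q-q^{-1})[n]$ (with $[m]$ as in \eqref{eq:doublefactorial}), together with $1-q^2=-q(q-q^{-1})$, allow one to convert the product of geometric factors. For type B, telescoping the powers of $q$ via $2+4+\cdots+2n=n(n+1)$ gives
\begin{equation*}
\rk_q(\beven{\Lambda}_n)=\frac{(-1)^n q^{-n(n+1)}}{(q-q^{-1})^n\,[2n]!!}=q^{-n^2}\frac{1}{(1-q^2)^n}\frac{1}{[2n]!!},
\end{equation*}
where the second equality uses $(1-q^2)^n=(-1)^nq^n(q-q^{-1})^n$. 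For type D, the analogous computation with $2+4+\cdots+2(n-1)=n(n-1)$ combined with the single factor $1/(1-q^{2n})$ yields the claimed formula by the same manipulation.

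There is no real obstacle here: \eqref{eq:invBD2} is precisely the input that makes the corollary a one-line algebraic identity, and the only care needed is bookkeeping of signs and of the power of $q$ extracted from each factor. This is the direct analogue of how Corollary~\ref{cor:bsym2} and Corollary~\ref{cor:rkSFDn} are deduced from Theorem~\ref{thm:bodd} and Proposition~\ref{prop:dodd}, respectively; in fact one can simply cite those spin rank computations, since the ranks of $\tyb{\Lambda}_n$ and $\tyd{\Lambda}_n$ agree termwise with their spin counterparts $\bodd{\Lambda}_n$ and $\dodd{\Lambda}_n$ (both are polynomial rings in generators of the same degrees).
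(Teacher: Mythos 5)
Your proposal is correct and matches the paper's intent exactly: the paper states this as an immediate corollary of \eqref{eq:invBD2}, i.e.\ it reads off the graded rank of a polynomial algebra on generators of degrees $4,8,\dots,4n$ (type B) resp.\ $4,8,\dots,4(n-1),2n$ (type D) and rewrites the product of geometric series in $q$-integer form, which is precisely the computation you carry out (and your degree bookkeeping and sign manipulations check out).
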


\subsection{The matrix algebra identification}
Denote by $\rk_q W = \sum_{w\in W} q^{2\ell(w)}$ the Poincare polynomial of $W$. It is equal to the graded rank for $\NC_W$, upon the replacement of $q$ by $q^{-1}$. 
Note that the action of $\NH_W$ on $\Pol_{\mathfrak h}$ in Proposition~\ref{prop:polR} induces a $\Z$-algebra homomorphism $\NH_W \longrightarrow \End_{{\Lambda}_{\mathfrak h}}(\tyeven{\Pol}_{\mathfrak h})$. 
The following result  in type A was established in \cite{La08} over $\Z$. We add a subscript $\Q$ to indicate the base change from $\Z$ to $\Q$, writing $\NH_{W,\Q}$ and so on. 

\begin{thm}
	\label{thm:iso}
We have the following isomorphism of graded algebras:    
\[
\NH_{W,\Q} \xrightarrow{\cong}   \End_{{\Lambda}_{\mathfrak h,\Q}}(\Pol_{\mathfrak h,\Q}) 
	 \cong \Mat_{\mathrm{rk}_qW}(\Lambda_{\mathfrak h,\Q}).
\]
%
\end{thm}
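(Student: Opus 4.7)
The plan is to follow the template of the spin type B/D proofs, notably Theorems~\ref{thm:biso} and \ref{thm:diso}, in the classical (non-spin) setting of an arbitrary Weyl group $W$. The polynomial representation of Proposition~\ref{prop:polR}, combined with the identification \eqref{eq:inv} and the Leibniz rule for $\partial_i$, shows that the left $\NH_{W,\Q}$-action on $\Pol_{\mathfrak h,\Q}$ commutes with right multiplication by any element of $\Lambda_{\mathfrak h,\Q}$; hence the action induces a graded algebra homomorphism
\[
\phi \colon \NH_{W,\Q} \longrightarrow \End_{\Lambda_{\mathfrak h,\Q}}\bigl(\Pol_{\mathfrak h,\Q}\bigr).
\]

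Injectivity of $\phi$, equivalently faithfulness of the polynomial representation, is the first step I would establish. Fixing a reduced expression for the longest element $w_0\in W$, define Schubert polynomials $\mathfrak{s}_w := \partial_{w^{-1}w_0}(x^\rho)$ for $w\in W$, where $x^\rho$ is the weight monomial attached to the chosen reduced expression so that $\partial_{w_0}(x^\rho)=\pm 1$. The standard identity $\partial_u(\mathfrak{s}_w) = \pm\mathfrak{s}_{wu^{-1}}$ when $\ell(wu^{-1})=\ell(w)-\ell(u)$ and $0$ otherwise, combined with a minimal-length argument identical in spirit to the proof of Proposition~\ref{prop:linearrel}, rules out any nontrivial linear relation among $\{x^{\underline r}\partial_w\}_{w\in W,\underline r}$ inside $\End(\Pol_{\mathfrak h,\Q})$. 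This simultaneously yields faithfulness of the polynomial representation and a PBW basis for $\NH_{W,\Q}$.

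The decisive structural input is the Chevalley--Shephard--Todd theorem: over $\Q$, the polynomial ring $\Pol_{\mathfrak h,\Q}$ is a graded-free $\Lambda_{\mathfrak h,\Q}$-module of graded rank equal to $\rk_q W$. This immediately provides the second isomorphism $\End_{\Lambda_{\mathfrak h,\Q}}(\Pol_{\mathfrak h,\Q}) \cong \Mat_{\rk_q W}(\Lambda_{\mathfrak h,\Q})$. To upgrade the injectivity of $\phi$ to an isomorphism, I would compare graded ranks: the PBW decomposition $\NH_{W,\Q}\cong \NC_W \otimes S(\mathfrak h)$ gives
\[
\rk_q \NH_{W,\Q} = \rk_q(\NC_W)\cdot \rk_q(S(\mathfrak h)) = (\rk_q W)|_{q \mapsto q^{-1}} \cdot (\rk_q W) \cdot \rk_q \Lambda_{\mathfrak h,\Q},
\]
which is precisely the graded rank $f(q)f(q^{-1})\rk_q \Lambda_{\mathfrak h,\Q}$ with $f=\rk_q W$ of the matrix algebra $\Mat_{\rk_q W}(\Lambda_{\mathfrak h,\Q})$.

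The main obstacle, or at any rate the reason for the base change from $\Z$ to $\Q$, is the appeal to Chevalley--Shephard--Todd: over $\Z$ the $\Lambda_{\mathfrak h}$-freeness of $\Pol_{\mathfrak h}$ with the expected graded rank can fail outside type $A$, which is exactly the obstruction that makes Remark~\ref{rem:integral} on an integral strengthening a nontrivial open question. Modulo this structural input, every remaining step---faithfulness, PBW, rank matching---mirrors arguments already presented in the body of the paper.
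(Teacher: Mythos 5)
Your overall architecture is the same as the paper's algebraic proof: Leibniz rule gives the homomorphism $\phi$, Chevalley gives freeness of $\Pol_{\mathfrak h,\Q}$ over $\Lambda_{\mathfrak h,\Q}$ of graded rank $\rk_q W$, and injectivity plus a graded-rank comparison yields the isomorphism. The gap is in your injectivity step. You define $\mathfrak{s}_w := \partial_{w^{-1}w_0}(x^{\rho})$ ``where $x^\rho$ is the weight monomial attached to the chosen reduced expression so that $\partial_{w_0}(x^\rho)=\pm 1$,'' but you never prove that such a monomial exists, and this existence is precisely the nontrivial input on which the whole faithfulness/PBW argument rests. It is not a routine fact: in the paper's own classical-type computations the analogous evaluation is a genuinely hard induction (\propref{prop:schubo}), and in type $D$ the natural staircase monomial gives $2^{n-1}$, not $\pm 1$ (\propref{prop:schudo}) --- which is exactly why integrality is subtle there (Conjecture~\ref{conj:Z12}, Remark~\ref{rem:integral}). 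For a general Weyl group no monomial with $\partial_{w_0}(x^{\rho})=\pm1$ is exhibited anywhere, so as stated your claim is both unjustified and stronger than what is known.

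What the paper actually proves, and what suffices over $\Q$, is the weaker claim that there exists some homogeneous $\mathfrak{s}_{w_0}\in S(\mathfrak h_\Q)$ with $\partial_{w_0}(\mathfrak{s}_{w_0})$ a nonzero constant. This is obtained by taking $\mathfrak{s}_{w_0}$ to be a homogeneous lift of a top-degree element $p_{w_0}$ of the coinvariant algebra $S(\mathfrak h_\Q)_W$ (e.g.\ the Schubert class of a point in $H^*(G/B)$) and invoking the Bernstein--Gelfand--Gelfand/Demazure nonvanishing $\partial_{w_0}(p_{w_0})\neq 0$, together with the freeness of $S(\mathfrak h_\Q)$ over $S(\mathfrak h_\Q)^W$; degree reasons then force $\partial_{w_0}(\mathfrak{s}_{w_0})$ to be a nonzero constant. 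If you replace your unproven monomial assertion by this claim (or by an independent citation of the faithfulness of the Demazure-operator representation, e.g.\ via localization and linear independence of the $w\in W$ acting on the fraction field), the rest of your argument --- the minimal-length linear-independence trick as in \propref{prop:linearrel}, Chevalley freeness, and the graded-rank count matching $\Mat_{\rk_q W}(\Lambda_{\mathfrak h,\Q})$ --- goes through exactly as in the paper's proof of \thmref{thm:diso}.
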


\begin{proof}
%
We give two proofs below: an algebraic argument in (a), and (Webster and Shan)  a geometric argument in (g).

(a). The general $W$ case follow by the same type of arguments for \thmref{thm:diso} in spin type D, with key input being \propref{prop:dfreemodule} (which is in turn based on \propref{prop:schudo}). Recall $\Pol_{\mathfrak h,\Q}=S(\mathfrak h_\Q)$. Let $w_0$ denotes the longest element in $W$. Thus for the argument to go through in the general $W$ case, all we need is the following.

{\bf Claim.} There exists  $\mathfrak{s}_{w_0} \in S(\mathfrak h_\Q)$ such that $\partial_{w_0} (\mathfrak{s}_{w_0})$ is a nonzero constant. 

Let us prove the Claim. 
We use the following well known facts over a field of characteristic zero, cf. \cite{Hi82}. The module $S(\mathfrak h_\Q)$ is free over the algebra $S(\mathfrak h_\Q)^W$, with a basis given by any lift of a basis for the coinvariant algebra $S(\mathfrak h_\Q)_W$. Let $\mathfrak{s}_{w_0}$ be a homogeneous lift of a highest degree element $p_{w_0}$ 
in  $S(\mathfrak h_\Q)_W$ (for example, $p_{w_0}$ can be the Schubert class of a point in the identification $S(\mathfrak h_\Q)_W$ with the cohomology ring of a flag variety $G/B$ of corresponding type). 
By \cite{BGG73, De74}, we have $\partial_{w_0}(p_{w_0}) \neq 0$.
Thus $\partial_{w_0} (\mathfrak{s}_{w_0}) \neq 0$, and for degree reasons, $\partial_{w_0} (\mathfrak{s}_{w_0})$ must be a constant. 

(g). When the second author showed the isomorphism in \thmref{thm:iso}(2) to Peng Shan and Ben Webster some time ago, they separately supplied a geometric argument, which is sketched as follows.

Let $G$ be a simple algebraic group with a Borel subgroup $B$ and Weyl group $W$. The nilHecke algebra $\NH_W$ is the $G$-equivariant homology of $G/B \times G/B$ endowed with convolution product (the BGG-Demazure operator for $w\in W$ corresponds to the fundamental class of the orbit closure associated to $w$).  This is the same as the Ext-algebra $\text{Ext}^*(\pi_* \underline{\Q}, \pi_* \underline{\Q})$  in the $G$-equivariant derived category (cf. \cite{CG97}), where $\pi_* \underline{\Q}$ denotes the pushforward of the constant sheaf with $\pi: G/B \rightarrow \mathrm{pt}$.  The latter can be identified as  the algebra of endomorphisms of $H_G^*(G/B)$ over $H_G^*(\mathrm{pt})$, which is a matrix algebra, since $G/B$ is equivariantly formal.  
\end{proof}

\begin{rem}
 \label{rem:integral}
One can show a variant of Theorem~\ref{thm:iso}  for type B over $\Z$, following the proof of \thmref{thm:biso}; and also for type A, see \cite{La08}. 
The counterpart of Conjecture~\ref{conj:Z12} (if proven) provides an integral version of Theorem~\ref{thm:iso} for type D over the ring $\Z[\frac12]$. According to Webster, the geometric argument can be strengthened to work over any subring of $\Q$ in which the torsion primes of $G$ are invertible.
\end{rem}


\end{document}